\newtheorem{thm}{Theorem}[section]
\newtheorem{corollary}[thm]{Corollary}
\newtheorem{lem}[thm]{Lemma}
\newtheorem{prop}[thm]{Proposition}
\theoremstyle{definition}
\newtheorem{defn}[thm]{Definition}
\newtheorem{example}[thm]{Example}
\newtheorem{assumption}[thm]{Assumption}
\newtheorem{rem}[thm]{Remark}
\newcommand\bC{\mathbb{C}}
\newcommand\bE{\mathbb{E}}
\newcommand\bF{\mathbb{F}}
\newcommand\bH{\mathbb{H}}
\newcommand\bL{\mathbb{L}}
\newcommand\bN{\mathbb{N}}
\newcommand\bP{\mathbb{P}}
\newcommand\bR{\mathbb{R}}
\newcommand\bZ{\mathbb{Z}}
\newcommand\cB{\mathcal{B}}
\newcommand\cD{\mathcal{D}}
\newcommand\cF{\mathcal{F}}
\newcommand\cH{\mathcal{H}}
\newcommand\cI{\mathcal{I}}
\newcommand\cL{\mathcal{L}}
\newcommand\cP{\mathcal{P}}
\newcommand\cS{\mathcal{S}}
\newcommand\cT{\mathcal{T}}
\newcommand\cpar{\text{$[$\kern-.38em$[$}}
\newcommand\cbrk{\text{$]$\kern-.15em$]$}}
\newcommand\opar{\text{\,\raise.2ex\hbox{${\scriptstyle
|}$}\kern-.34em$($}}
\newcommand\obrk{\text{$)$\kern-.34em\raise.2ex\hbox{${\scriptstyle |}$}}\,}
\newcommand{\mysection}[1]{\section{#1}
\setcounter{equation}{0}}
\def\XXint#1#2#3{{\setbox0=\hbox{$#1{#2#3}{\int}$ }
\vcenter{\hbox{$#2#3$ }}\kern-.58\wd0}}
\def\@tocline#1#2#3#4#5#6#7{\relax
  \ifnum #1>\c@tocdepth 
  \else
    \par \addpenalty\@secpenalty\addvspace{#2}%
    \begingroup \hyphenpenalty\@M
    \@ifempty{#4}{%
      \@tempdima\csname r@tocindent\number#1\endcsname\relax
    }{%
      \@tempdima#4\relax
    }%
    \parindent\z@ \leftskip#3\relax \advance\leftskip\@tempdima\relax
    \rightskip\@pnumwidth plus4em \parfillskip-\@pnumwidth
    #5\leavevmode\hskip-\@tempdima
      \ifcase #1
       \or\or \hskip 1em \or \hskip 2em \else \hskip 3em \fi%
      #6\nobreak\relax
    \dotfill\hbox to\@pnumwidth{\@tocpagenum{#7}}\par
    \nobreak
    \endgroup
  \fi}
\begin{document}
\title[SRDAEs with infinitesimal generators of subordinate Brownian motions and colored noises]
{Sobolev regularity theory for stochastic reaction-diffusion-advection equations with spatially homogeneous colored noises and infinitesimal generators of subordinate Brownian motions}

\author[J.-H. Choi]{Jae-Hwan Choi}
\address[J.-H. Choi]{School of Mathematics, Korea Institute for Advanced Study, 85 Hoegiro Dongdaemun-gu, Seoul 02455, Republic of Korea}
\email{jhchoi@kias.re.kr}

\author[B.-S. Han]{Beom-Seok Han}
\address[B.-S. Han]{School of Mathematics, Statistics and Data Science, Sungshin Women's University, 2, Bomun-ro 34 da-gil, Seongbuk-gu, Seoul, 02844, Republic of Korea.} 
\email{b\_han@sungshin.ac.kr}

\author[D. Park]{Daehan Park}
\address[D. Park]{Department of Mathematics, Kangwon National University, 1 Kangwondaehakgil, Chucheon, Gangwon State, 24341, Republic of Korea.} 
\email{daehanpark@kangwon.ac.kr}

\thanks{}

\subjclass[2020]{60H15, 35R60}

\keywords{Stochastic reaction-diffusion-advection equation, Colored noise, Infinitesimal generator, Sobolev regularity, Mixed norm, H\"older regularity}

\maketitle
\begin{abstract}
This article investigates the existence, uniqueness, and regularity of solutions to nonlinear stochastic reaction-diffusion-advection equations (SRDAEs) with spatially homogeneous colored noises and infinitesimal generators of subordinate Brownian motions in mixed norm $L_q(L_p)$-spaces.
We introduce a new condition—strongly reinforced Dalang's condition—on colored noise, which facilitates a deeper understanding of the complicated relation between nonlinearities and stochastic forces.
Additionally, we establish the space-time H\"older type regularity of solutions.
\end{abstract}
\tableofcontents

\mysection{Introduction and Main results}
\subsection{Goal and Settings}
Consider the following non-linear stochastic reaction-diffusion-advection equations (SRDAEs):
\begin{equation}
\label{24.03.13.13.13}
\partial_tu=\phi(\Delta)u+\zeta F(u)+\vec{b}\cdot\nabla_x(B(u))+\xi\varphi(u)\dot{W}\quad \text{on } (0,\infty)\times\bR^d;\quad u(0,\cdot)=u_0.
\end{equation}
Here, 
\begin{itemize}
    \item the operator $\phi(\Delta)$ represents a pseudo-differential operator defined by
    $$
    \phi(\Delta)f(x):=-\frac{1}{(2\pi)^d}\int_{\bR^d}\int_{\bR^d}\phi(|\xi|^2)f(y)\mathrm{e}^{\mathrm{i}(x-y)\cdot\xi}\mathrm{d}\xi\mathrm{d}y,
    $$
where $\phi$ is a Bernstein function,
    \item the random noise $\dot{W}$ is a spatially homogeneous colored noise with a nonnegative definite correlation $\pi$,
    \item we call non-linear terms $F(u)$, $B(u)$ and $\varphi(u)$ \emph{strong dissipative term}, \emph{generalized Burgers' term} and \emph{super-linear multiplicative noise term}, respectively.
\end{itemize}

\begin{defn}
    An infinitely differentiable function $\phi:(0,\infty)\to[0,\infty)$ is called a \emph{Bernstein function} if 
    $$
    (-1)^nD^n\phi(\lambda)\leq0,\quad \forall n\in\bN,\,\lambda\in(0,\infty).
    $$
\end{defn}

    \begin{defn}
    $(i)$ A tempered distribution $\pi$ is called a \emph{nonnegative definite} if for a Schwartz function $\phi$,
    $$
    (\pi,\phi\ast\bar{\tilde{\phi}})\geq0,\quad \tilde{\phi}(x):=\phi(-x).
    $$
    Here $\overline{z}$ is the complex conjugate of $z$.
        
    $(ii)$ We say that a mean zero Gaussian random field $W$ is a \emph{spatially homogeneous colored noise} with a nonnegative definite correlation $\pi$ if for Schwartz functions $\psi_1$ and $\psi_2$,
    $$
    \mathbb{E}[W(\psi_1)W(\psi_2)]=\int_0^{\infty}(\pi,\psi_1(t,\cdot)\ast\bar{\tilde{\psi}}_2(t,\cdot))\mathrm{d}t.
    $$
    \end{defn}

\begin{rem}
The concept of spatially homogeneous colored noise $W$ can be elucidated through an infinite series of Itô stochastic integrals. For any Schwartz function $\psi$, we express
    $$
    W(\psi)=\sum_{k=1}^{\infty}\int_0^{\infty}\int_{\bR^d}\psi(t,x)(\pi\ast e_k)(x)\mathrm{d}x\mathrm{d}w_t^k.
    $$
    Here, $\{w_{\cdot}^k\}_{k=1}^{\infty}$ denotes a sequence of independent one-dimensional Wiener processes, and $\{e_k\}_{k=1}^{\infty}$ forms a real-valued complete orthonormal basis of the Hilbert space $\mathfrak{H}$, which is derived from $\pi$.
    Consequently, the noise term $W$ can be extended to an $L_2(\Omega)$-valued martingale measure:
    $$
    W(\mathrm{d}t,\mathrm{d}x):=\sum_{k=1}^{\infty}(\pi\ast e_k)(x)\mathrm{d}x\mathrm{d}w_t^k.
    $$
    This formulation allows us to interpret equation \eqref{24.03.13.13.13} in an integral form: for any test function $\eta \in C_c^{\infty}(\mathbb{R}^d)$,
    \begin{align*}
        (u(t,\cdot),\eta)&=(u_0,\eta)+\int_0^t(u(s,\cdot),\phi(\Delta)\eta)+(\zeta(s,\cdot) F(s,u(s,\cdot)),\eta)\mathrm{d}s\\
        &\quad+\int_0^t(B(s,u(s,\cdot)),\vec{b}(s,\cdot)\cdot\nabla_{x}\eta)\mathrm{d}s+\sum_{k=1}^{\infty}\int_0^t(\xi(s,\cdot)\varphi(s,u(s,\cdot))\pi\ast e_k,\eta)\mathrm{d}w_s^k.
    \end{align*}
    Therefore, adopting It\^o's differential notation, the equation simplifies to
    $$
    \mathrm{d}u=(\phi(\Delta)u+\zeta F(u)+\vec{b}\cdot\nabla_x(B(u)))\mathrm{d}t+\xi\varphi(u)(\pi\ast e_k)\mathrm{d}w_t^k;\quad u(0,\cdot)=u_0.
    $$
    For more discussions on the Hilbert space $\mathfrak{H}$ and the basis ${e_k}$, we refer the reader to \cite{CH2021}.
\end{rem}

In this article, we investigate the existence, uniqueness, and regularity of solutions to \eqref{24.03.13.13.13} in Sobolev mixed norm spaces $\bH_{p,q}^{\phi,\gamma}(\tau):=L_q(\Omega\times (0,\tau],\mathcal{P},\mathbb{P}\times\mathrm{d}t;H_p^{\phi,\gamma}(\bR^d))$.
Here $(\Omega, \bF, \bP)$ is a complete probability space with a filtration $\{\bF_t\}_{t\geq0}$ satisfying the usual conditions and $\cP$ is the predictable $\sigma$-field related to $\bF_t$, $\tau$ is a $\mathbb{F}_{t}$-adapted stopping time, and the norm of $\bH_{p,q}^{\phi,\gamma}(\tau)$ is defined in the usual way, \textit{i.e.}, 
\begin{equation} \label{norm}
\| u \|^q_{\bH^{\phi,\gamma}_{p,q}(\tau)} := \bE\left[ \int^{\tau}_0 \| u(t,\cdot) \|^q_{H^{\phi,\gamma}_{p}(\bR^d)}\mathrm{d}t\right]. 
\end{equation}

\begin{defn}
For $p>1$ and $\gamma \in \mathbb{R}$,
\begin{enumerate}[(i)]
\item  let $H_p^{\phi,\gamma}(\bR^d)$ denote the class of tempered distributions $u$ on $\bR^d$ such that
$$ \| u \|_{H_p^{\phi,\gamma}(\bR^d)} := \| (1-\phi(\Delta))^{\gamma/2} u\|_{L_p(\bR^d)} = \| \cF^{-1}[ (1+\phi(|\cdot|^2))^{\gamma/2}\cF[u]]\|_{L_p(\bR^d)}<\infty,
$$

\item let $H_p^{\phi,\gamma}(\bR^d;\ell_2)$ denote the class of $\ell_2$-valued tempered distributions $g=(g^1,g^2,\cdots)$ on $\bR^d$ such that
$$ \|g\|_{H_{p}^{\phi,\gamma}(\bR^d;\ell_2)}:= \| | (1-\phi(\Delta))^{\gamma/2} g|_{\ell_2}\|_{L_p} = \| |\cF^{-1}[ (1+\phi(|\cdot|^2))^{\gamma/2}\cF[g]]|_{\ell_2} \|_{L_p(\bR^d)}<\infty.
$$	
\end{enumerate}
Here, $\cF$ denotes the $d$-dimensional Fourier transform and, $\cF^{-1}$ denotes the $d$-dimensional inverse Fourier transform.
\end{defn}

\subsection{Background}

The study of the solvability of SRDAEs poses significant challenges due to the irregular perturbation from noise.
As such, the solvability of SRDAEs has attracted substantial interest from the mathematical research community.
This section revisits various contributions regarding the non-linear SRDAEs.

\emph{Case 1: Stochastic reaction-diffusion equations with a solely stochastic non-linear reaction term}
$$
\partial_t u = \mathcal{L}u + |u|^{1+\lambda_{\text{s.m.}}}\dot{W}.
$$
When $d = 1$, $\mathcal{L}$ is a second-order elliptic operator, and $\dot{W}$ is the space-time white noise, C. Mueller \cite{Muller1991} established that for $0 \leq \lambda_{\text{s.m.}} < \frac{1}{2}$, a mild solution exists globally in time on a finite spatial interval with periodic boundary conditions.
Moreover, C. Mueller \cite{mueller2000critical} also demonstrated that for $\lambda_{\text{s.m.}} > \frac{1}{2}$, the mild solution exhibits finite-time explosion with non-zero probability under the same conditions.
In the critical scenario where $\lambda_{\text{s.m.}} = \frac{1}{2}$, M. Salins \cite{Salin2023} showed that solution explosions are precluded within a periodic domain.
In instances where non-explosive solutions are considered, for $0 \leq \lambda_{\text{s.m.}} < \frac{1}{2}$, significant results have emerged.
Notably, N.V. Krylov \cite[Section 8.4]{kry1999analytic} established the existence and Sobolev regularity of the strong solution over $\mathbb{R}$.
B.-S. Han \cite{HK2020} confirmed the existence, uniqueness, Sobolev regularity, and boundary behavior of strong solutions on a finite interval with Dirichlet boundary conditions.
Additionally, B.-S. Han \cite{Han2021} further substantiated the existence, uniqueness, and Sobolev regularity of strong solutions over $\mathbb{R}$ with $\mathcal{L} = a\Delta^{\alpha/2} + bD + c$.
These results predominantly address the dynamics influenced by space-time white noise.
J.-H. Choi and B.-S. Han \cite{CH2021} demonstrated the existence, uniqueness, and Sobolev regularity of strong solutions in $\mathbb{R}^d$ with spatially homogeneous colored noise $\dot{W}$ and $\mathcal{L} = a^{ij}D^{ij} + b^iD^i + c$.
For further discussions on the range $-1 < \lambda_{\text{s.m.}} < 0$, the reader is referred to \cite{KCE2010,CLE2014,LE2011}.

\emph{Case 2: Stochastic reaction-diffusion equations with deterministic and stochastic reaction terms}
$$
\partial_t u = \mathcal{L}u + F(u) + \varphi(u)\dot{W}.
$$
Assuming $\mathcal{L}$ as a second-order elliptic operator and $\dot{W}$ as spatially homogeneous colored noise, the scenario where $\varphi$ exhibits linear growth and $F$ has polynomial growth with a negative leading coefficient has been addressed by S. Cerrai, G. Da Prato, and F. Flandoli \cite{Cerrai2003,Cerrai2011,cerrai2013pathwise}.
They proved the existence and uniqueness of mild solutions over bounded open sets in $\mathbb{R}^d$ with regular boundaries.
Additionally, in the case where $|F(u)| \lesssim |u|\log|u|$ and $|\varphi(u)| \lesssim |u|(\log|u|)^{\kappa/2}$ for some $\kappa > 0$ and large $|u|$, R.C. Dalang, D. Khoshnevisan, and T. Zhang \cite{DKZ2019} established the existence and uniqueness of the mild solution with space-time white noise.
L. Chen and J. Huang \cite{chen2022superlinear} extended these works to settings with spatially homogeneous colored noise under reinforced Dalang's condition.
When $F(u)\text{sign}(u) \lesssim -|u|^{1+\beta}$ and $|\varphi(u)| \lesssim (1 + |u|)^{1+\gamma}$, M. Salins \cite{Salin2022} confirmed the existence of a global mild solution.
Inspired by the result of Salins \cite{Salin2022}, which suggests that larger values of $\gamma$ can be chosen if $\beta$ is substantial, B.-S. Han and J. Yi \cite{HY2024} validated the existence, uniqueness, and Sobolev regularity of strong solutions with $F(u) = -|u|^{1+\beta}$ and $\varphi(u) = |u|^{1+\gamma}$ in contexts with spatially homogeneous colored noise meeting reinforced Dalang's condition.
Recently, A. Agresti and M.C. Veraar \cite{AM2023,AM2024} demonstrated the existence, uniqueness, and regularity of solutions within critical spaces for stochastic reaction-diffusion equations with transport noise in periodic domains.

\emph{Case 3: Stochastic reaction-diffusion-advection equations}
$$
\partial_t u = \mathcal{L}u + F(u) + \vec{b}\cdot \nabla_x(B(u)) + \varphi(u)\dot{W}.
$$
Assuming that $\mathcal{L}$ is a second-order elliptic operator, the field has evolved significantly since the seminal contributions of J.D. Cole and E. Hopf \cite{Cole1950,Hopf1950}, who initially studied Burgers' equation.
Following their pioneering efforts, L. Bertini, N. Cancrini, and G. Jona-Lasinio \cite{BCJ1994} established the existence of a mild solution via the Cole-Hopf transformation over $\mathbb{R}$, considering $F=0$, $\varphi=1$, and space-time white noise $\dot{W}$.
Further developments were made by G. Da Prato and D. Gatarek \cite{DG1995}, who demonstrated the existence of a mild solution in the unit interval with Dirichlet boundary conditions, where $F=0$, $B(u)=u^2$, $\varphi$ was bounded and Lipschitz continuous, and influenced by space-time white noise $\dot{W}$.
Strong solutions have also been extensively investigated.
I. Gy\"ongy \cite{Gyongy1998} verified the existence and uniqueness of a strong solution within the unit interval with Dirichlet boundary conditions, where $B(u)=u^2$, both $F$ and $\varphi$ exhibited linear growth, and $\dot{W}$ was space-time white noise.
This analysis was expanded to the real line in \cite{GN1999}.
I. Gy\"ongy and C. Rovira \cite{gyongy1999stochastic} further extended these results, proving the existence and uniqueness of strong solutions in unit intervals with Dirichlet boundary conditions, white in time noise, and a polynomial growth function $B$.
Their findings were later generalized to bounded convex domains in $\mathbb{R}^d$ \cite{gyongy2000lp}.
J.A. Le\'on, D. Nualart, and R. Pettersson \cite{leon2000stochastic} built upon the results of \cite{BCJ1994} by considering bounded Lipschitz continuous $\varphi$ within the unit interval under Dirichlet boundary conditions, where $F=0$ and $B(u)=u^2$, also providing moment estimates. 
P. Lewis and D. Nualart \cite{LD2018} generalized these earlier studies \cite{BCJ1994,leon2000stochastic}, contributing to the understanding of the H\"older regularity of moment estimates of the solution.
Most recently, B.-S. Han \cite{Han2022,Han2023} has confirmed the existence, uniqueness, and Sobolev regularity of strong solutions across the entire space with a linear growth function $F$.
Further developments with the time-fractional derivative were reported in \cite{Han2024}.
For an abstract semigroup approach to SRDAEs, we refer the reader to \cite{RS2006}.

Despite significant advancements in the study of SRDAEs as evidenced by the results discussed previously, several fundamental and intriguing questions remain unaddressed.
These questions include:

\begin{enumerate}
    \item \textbf{Non-local operators}: Beyond the findings of B.-S. Han \cite{Han2021}, research has predominantly focused on second-order elliptic operators.
    This observation prompts the question: \emph{Is it feasible to establish solvability for SRDAEs that involve infinitesimal generators of subordinate Brownian motions?}
    
    \item \textbf{Regularity theory in mixed Norm $L_q(L_p)$-Spaces}: Aside from the contributions of A. Agresti and M.C. Veraar \cite{AM2023,AM2024}, existing research primarily addresses regularity within $L_p$-spaces.
    This situation leads to the question: \emph{Is it possible to extend solvability results to SRDAEs in mixed norm $L_q(L_p)$-spaces?}
    
    \item \textbf{Nonlinear terms}: Prior studies have not considered the presence of three polynomial growth nonlinear terms, $F$, $B$, and $\varphi$.
    Consequently, we ask: \emph{Is it feasible to establish solvability for SRDAEs featuring three polynomial growth nonlinear terms $F$, $B$, and $\varphi$?}
    
    \item \textbf{Spatially homogeneous colored noise}: Most results concerning spatially homogeneous colored noise rely on the reinforced Dalang's condition.
    This consideration leads to the questions: \emph{Can we characterize the reinforced Dalang's condition effectively?
    If so, how can this characterization facilitate the resolution of SRDAEs?}
\end{enumerate}
This article is dedicated to addressing the above questions, aiming to fill the gaps identified in current research on SRDAEs.
\emph{On (4),} rather than assuming the reinforced Dalang condition abstractly, we \emph{characterize} it via an explicit, verifiable integrability criterion on the correlation measure—Assumption \ref{main_ass_corr}$(\delta_0,\gamma,r)$ (equivalently, $\nu_{\delta_0,\gamma,r}<\infty$). 
This parameterized form, through the noise smoothness $r>1$ and the weak lower scaling index $\delta_0$ of $\phi$, enters directly into our a priori estimates and yields \emph{weaker} requirements on the noise measure together with a \emph{larger} admissible regularity range $\gamma$ (see Theorem \ref{main} and Remark \ref{25.08.13.15.15}).

\subsection{Conditions on spatially homogeneous colored noise}
This subsection details the conditions imposed on spatially homogeneous colored noise $W$ characterized by a nonnegative definite correlation $\pi$.
Here is our main assumption on correlation $\pi$.
Throughout this article, we always set 
$$
\delta_0\in(0,1], \quad \gamma\in(0,1)\quad \text{and}\quad r\in[1,d/(d-\delta_0(1-\gamma))).
$$
\begin{assumption}[$\delta_0,\gamma,r$] 
\label{main_ass_corr}
    The spatially homogeneous colored noise $W$ with a nonnegative definite correlation $\pi$ satisfies the following:
    \begin{enumerate}[(i)]
        \item The correlation $\pi$ is a nonnegative and nonnegative definite tempered distribution.
        \item The correlation $\pi$  satisfies that
$$
\begin{cases}
    \int_{|x|<1}|x|^{2\alpha_{\delta_0,\gamma,r}-d}\pi(\mathrm{d}x)<\infty,\quad &\text{if } 0<\alpha_{\delta_0,\gamma,r}<d/2,\\
    \int_{|x|< 1}\log(1/|x|)\pi(\mathrm{d}x)<\infty,\quad &\text{if } \alpha_{\delta_0,\gamma,r}=d/2,\\
    \text{no specific conditions on $\pi$,}\quad &\text{if } \alpha_{\delta_0,\gamma,r}>d/2,
\end{cases}
$$
where $\alpha_{\delta_0,\gamma,r}:=d-r(d-\delta_0(1-\gamma))\in(0,d)$.
    \end{enumerate}
\end{assumption}

When $r=1$, Assumption \ref{main_ass_corr} ($\delta_0,\gamma,1$) is called \emph{the reinforced Dalang’s condition} which is equivalent to
    $$
    \int_{\bR^d}(R_{\delta_0(1-\gamma)}\ast R_{\delta_0(1-\gamma)})(x)\pi(\mathrm{d}x)=\int_{\bR^d}R_{2\delta_0(1-\gamma)}(x)\pi(\mathrm{d}x)=:\nu_{\delta_0,\gamma,1}<\infty,
    $$
where
    \begin{equation}
    \label{24.03.04.15.14}
    R_{\beta}(x):=\frac{1}{(2\pi)^{d/2}}\int_{\bR^d}\frac{\mathrm{e}^{\mathrm{i}x\cdot\xi}}{(1+|\xi|^2)^{\beta/2}}\mathrm{d}\xi,\quad \beta>0.
    \end{equation}
The reinforced Dalang’s condition plays a crucial role in the study of the regularity of solutions to SPDEs driven by colored noise; see, for example, \cite{FS2006, HY2024, Sanzsole2002}.

For $r>1$, we say that Assumption \ref{main_ass_corr} ($\delta_0,\gamma,r$) is \emph{the strongly reinforced Dalang’s condition}.
We explore the connections between the reinforced and strongly reinforced Dalang's conditions through the following proposition.
\begin{prop}
\label{24.04.21.21.16}
    Let $\delta_0\in(0,1]$, $\gamma\in(0,1)$, $r_1,r_2\in[1,d/(d-\delta_0(1-\gamma)))$.
    Suppose that $\pi$ is a correlation satisfying Assumption \ref{main_ass_corr} $(\delta_0,\gamma,r_2)$.
    If $r_1< r_2$, then $\pi$ also satisfies Assumption \ref{main_ass_corr} $(\delta_0,\gamma,r_1)$.
\end{prop}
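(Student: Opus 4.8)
The plan is to reduce the statement to a comparison of the exponents $\alpha_{\delta_0,\gamma,r_1}$ and $\alpha_{\delta_0,\gamma,r_2}$ together with an elementary pointwise estimate between the corresponding weight functions near the origin. First I would record the monotonicity: since $0<\delta_0(1-\gamma)<1\le d$, the quantity $d-\delta_0(1-\gamma)$ is strictly positive, so $r\mapsto\alpha_{\delta_0,\gamma,r}=d-r(d-\delta_0(1-\gamma))$ is strictly decreasing. Hence $r_1<r_2$ yields $\alpha_1:=\alpha_{\delta_0,\gamma,r_1}>\alpha_{\delta_0,\gamma,r_2}=:\alpha_2$, and both belong to $(0,d)$; in fact $\alpha_{\delta_0,\gamma,r}\le\delta_0(1-\gamma)<1$ for every admissible $r$.

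Next I would split according to the position of $\alpha_1$ relative to $d/2$, matching the case distinction in Assumption \ref{main_ass_corr}. If $\alpha_1>d/2$, then Assumption \ref{main_ass_corr} $(\delta_0,\gamma,r_1)$ imposes no condition on $\mu$ and there is nothing to prove. If $\alpha_1\le d/2$, then $\alpha_2<\alpha_1\le d/2$, so Assumption \ref{main_ass_corr} $(\delta_0,\gamma,r_2)$ is exactly the finiteness $\int_{|x|<1}|x|^{2\alpha_2-d}\mu(\mathrm{d}x)<\infty$ (the logarithmic borderline case for $r_2$ cannot occur here, since $\alpha_2<d/2$ strictly), and it remains to derive the corresponding statement for $\alpha_1$.

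For this I use two elementary facts on $0<|x|<1$. When $\alpha_1<d/2$, the inequality $2\alpha_1-d\ge 2\alpha_2-d$ gives $|x|^{2\alpha_1-d}\le|x|^{2\alpha_2-d}$, hence $\int_{|x|<1}|x|^{2\alpha_1-d}\mu(\mathrm{d}x)\le\int_{|x|<1}|x|^{2\alpha_2-d}\mu(\mathrm{d}x)<\infty$. When $\alpha_1=d/2$, I use that $\log(1/|x|)$ grows more slowly at the origin than any negative power of $|x|$: since $2\alpha_2-d<0$, there is a constant $C=C(d,\alpha_2)$ with $\log(1/|x|)\le C|x|^{2\alpha_2-d}$ on $(0,1)$, so $\int_{|x|<1}\log(1/|x|)\mu(\mathrm{d}x)\le C\int_{|x|<1}|x|^{2\alpha_2-d}\mu(\mathrm{d}x)<\infty$. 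Together with the trivial case $\alpha_1>d/2$ this proves the proposition.

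I do not expect any genuinely hard step; the only point requiring a little care is the bookkeeping of the case split---in particular verifying that ``$\alpha_1\le d/2$'' forces ``$\alpha_2<d/2$'', so that the available hypothesis is always the power-type integrability bound and never the logarithmic one, and noting that since $\alpha_{\delta_0,\gamma,r}<1$ the borderline and unconditional regimes are only relevant when $d=1$, whereas for $d\ge 2$ one always lands in the regime $\alpha_1<d/2$.
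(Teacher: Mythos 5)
Your proof is correct, and it takes a genuinely different route from the paper. The paper does not argue directly from the integrability conditions on $\mu$; instead it first establishes Proposition \ref{24.04.21.21.05}, which characterizes Assumption \ref{main_ass_corr} $(\delta_0,\gamma,r)$ as the finiteness of $\nu_{\delta_0,\gamma,r}=\int_{\bR^d}(R_{\delta_0(1-\gamma)}^{r}\ast R_{\delta_0(1-\gamma)}^{r})\,\mathrm{d}\mu$, and then deduces $\nu_{\delta_0,\gamma,r_1}<\infty$ from $\nu_{\delta_0,\gamma,r_2}<\infty$ via two applications of H\"older's inequality (one pointwise, bounding $(R^{r_1}\ast R^{r_1})(x)$ by $(R^{r_2}\ast R^{r_2})(x)^{r_1^2/r_2^2}$ up to the $L_{r_1}$- and $L_{r_2}$-norms of $R_{\delta_0(1-\gamma)}$, and one on the measure $\mu$, using its temperedness through the constant $A_\mu$). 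Your argument bypasses all of this: it exploits only the strict monotonicity of $r\mapsto\alpha_{\delta_0,\gamma,r}$ and the elementary pointwise domination $|x|^{2\alpha_1-d}\le|x|^{2\alpha_2-d}$ (resp.\ $\log(1/|x|)\le C|x|^{2\alpha_2-d}$) on $\{|x|<1\}$, with a correct bookkeeping of the borderline cases; your remark that $\alpha_{\delta_0,\gamma,r}\le\delta_0(1-\gamma)<1$ so that for $d\ge2$ only the subcritical regime ever occurs is also accurate. What you lose relative to the paper is only the quantitative comparison of the quantities $\nu_{\delta_0,\gamma,r}$, which the authors get as a byproduct and which ties into the convolution framework they reuse later (e.g.\ in Section \ref{24.03.22.14.29}); as a proof of the stated implication, your version is shorter, more elementary, and complete.
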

The proof of Proposition \ref{24.04.21.21.16} is presented at the end of this subsection.

To establish Proposition \ref{24.04.21.21.16}, we rely on specific properties of $R_{\beta}$, as detailed in \cite[Proposition 1.2.5]{grafakos2014modern}.
\begin{enumerate}[(1)]
    \item $R_{\beta}$ is strictly positive and radially decreasing.
    \item for all $\beta>0$,
    \begin{equation}
    \label{24.03.06.12.01}
    \|R_{\beta}\|_{L_1(\bR^d)}=1.
    \end{equation}
    \item for all $x\in\bR^d\setminus\{0\}$ and $\beta\in(0,d)$,
\begin{equation}
\label{24.04.09.15.21}
 R_{\beta}(x)\leq N(\beta,d)|x|^{\beta-d}1_{|x|\leq2}+N(\beta,d)\mathrm{e}^{-\frac{|x|}{2}}1_{|x|>2}.
\end{equation}
    \item for $|x|\leq 2$ and $\beta\in(0,d)$,
    \begin{equation}
    \label{25.08.06.17.55}
    N(\beta,d)|x|^{\beta-d}\leq R_{\beta}(x).
    \end{equation}
\end{enumerate}
Moreover, by synthesizing properties (1) and (4), we derive the extended property:

\vspace{1mm}

\noindent $(4')$ for $\kappa>0$ and $\beta\in(0,d)$, there exists $N=N(\beta,d,\kappa)>0$ such that for $|x|<\kappa$,
\begin{equation}
\label{24.04.21.14.50}
    N|x|^{\beta-d}\leq R_{\beta}(x).
\end{equation}
By combining (3) and (4), we ascertain that:
\begin{equation}
\label{24.04.21.20.52}
\|R_{\delta_0(1-\gamma)}\|_{L_r(\bR^d)}<\infty \Longleftrightarrow1\leq r<\frac{d}{d-\delta_0(1-\gamma)}\Longleftrightarrow \alpha_{\delta_0,\gamma,r}\in(0,d).
\end{equation}

Based on the well-defined properties of $R_{\delta_0(1-\gamma)}$, we can find conditions that correspond equivalently to Assumption \ref{main_ass_corr} ($\delta_0,\gamma,r$).
\begin{prop}
\label{24.04.21.21.05}
Let $\delta_0\in(0,1]$, $\gamma\in(0,1)$, $r\in[1,d/(d-\delta_0(1-\gamma)))$ and $\pi$ be a tempered measure defined on $\bR^d$.
Then, the following conditions are equivalent:
    \begin{enumerate}[(i)]
        \item Assumption \ref{main_ass_corr} ($\delta_0,\gamma,r$) holds.
        \item The convolution $(R_{\delta_0(1-\gamma)}^{r}\ast R_{\delta_0(1-\gamma)}^{r})$ belongs to $L_1(\bR^d,\pi)$;
        \begin{equation}
\label{24.04.21.12.31}
\nu_{\delta_0,\gamma,r}:=\int_{\bR^d}(R_{\delta_0(1-\gamma)}^{r}\ast R_{\delta_0(1-\gamma)}^{r})(x)\pi(\mathrm{d}x)<\infty.
\end{equation}
    \item The convolution $(R_{\delta_0(1-\gamma)}^{r}\ast R_{\delta_0(1-\gamma)}^{r})$ belongs to $L_1(B_1(0),\pi)$;
    \begin{equation}
\label{24.04.09.14.33}
    \int_{|x|< 1}(R_{\delta_0(1-\gamma)}^r\ast R_{\delta_0(1-\gamma)}^r)(x)\pi(\mathrm{d}x)<\infty.
\end{equation}
    \end{enumerate}
\end{prop}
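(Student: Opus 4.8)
The plan is to distill from the pointwise two-sided bounds \eqref{24.04.09.15.21}, \eqref{24.04.21.14.50} on $R_{\delta_0(1-\gamma)}$ and the integrability criterion \eqref{24.04.21.20.52} precise information on the convolution $K:=R_{\delta_0(1-\gamma)}^{r}\ast R_{\delta_0(1-\gamma)}^{r}$ near the origin and at infinity, and then to read the three equivalences off it. Abbreviate $\beta:=\delta_0(1-\gamma)\in(0,1)$ and $\alpha:=\alpha_{\delta_0,\gamma,r}=d-r(d-\beta)\in(0,d)$; since $r(d-\beta)=d-\alpha\in(0,d)$, the estimates \eqref{24.04.09.15.21}--\eqref{24.04.21.14.50} give $N^{-1}|x|^{-(d-\alpha)}\le R_{\beta}^{r}(x)\le N|x|^{-(d-\alpha)}$ for $0<|x|\le2$, $0\le R_{\beta}^{r}(x)\le N\mathrm{e}^{-r|x|/2}$ for $|x|>2$, and $R_{\beta}^{r}\in L_1(\bR^d)$ by \eqref{24.04.21.20.52}. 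The two facts that drive the proof are: \textbf{(A)} for $0<|x|\le1/2$ one has $K(x)\asymp|x|^{2\alpha-d}$ when $0<\alpha<d/2$, $K(x)\asymp\log(1/|x|)$ when $\alpha=d/2$, and $K(x)\asymp1$ when $\alpha>d/2$; and \textbf{(B)} there are $c,N>0$ with $K(x)\le N$ for $1/2\le|x|\le4$ and $K(x)\le N\mathrm{e}^{-c|x|}$ for $|x|>4$. Granting (A)--(B), the proposition follows, since $K\ge0$, a tempered measure $\mu$ is locally finite and satisfies $\int_{\bR^d}\mathrm{e}^{-c|x|}\mu(\mathrm{d}x)<\infty$, and the three kernels in (A) are $\mu$-integrable on $B_{1/2}(0)$ exactly under the corresponding three alternatives of Assumption \ref{main_ass_corr}(ii) (on the annulus $\{1/2\le|x|<1\}$ every such kernel is bounded and $\mu$ is finite).

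Fact (A) is the core computation and the main obstacle. For the upper bound when $0<\alpha<d/2$ (so $2(d-\alpha)>d$), decompose the integral defining $K(x)$ over $\{|y|<|x|/2\}$, $\{|x-y|<|x|/2\}$ and $\{|y|\ge|x|/2,\ |x-y|\ge|x|/2\}$. On the first set, $|x|/2\le|x-y|\le3|x|/2\le1$, so $R_{\beta}^{r}(x-y)\le N|x|^{-(d-\alpha)}$ while $\int_{|y|<|x|/2}R_{\beta}^{r}(y)\,\mathrm{d}y\le N|x|^{\alpha}$, contributing $\le N|x|^{2\alpha-d}$; the second set is symmetric. On the third set $|x-y|\asymp|y|$, so splitting into $|x|/2<|y|<1$, where $R_{\beta}^{r}(y)R_{\beta}^{r}(x-y)\le N|y|^{-2(d-\alpha)}$ integrates to $N|x|^{2\alpha-d}$ (using $2(d-\alpha)>d$), and $|y|\ge1$, where the integrand is $\lesssim R_{\beta}^{r}(y)$ with $\int_{|y|\ge1}R_{\beta}^{r}(y)\,\mathrm{d}y<\infty$, one again obtains $\le N|x|^{2\alpha-d}$. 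The cases $\alpha=d/2$ and $\alpha>d/2$ use the same decomposition, the only change being $\int_{|x|\le|y|\le1}|y|^{-d}\,\mathrm{d}y\asymp\log(1/|x|)$ and $\int_{|y|\le1}|y|^{-2(d-\alpha)}\,\mathrm{d}y<\infty$, respectively. For the matching lower bounds: if $\alpha<d/2$, restricting to $|y|<|x|/4$ gives $K(x)\ge N|x|^{-(d-\alpha)}\int_{|y|<|x|/4}|y|^{-(d-\alpha)}\,\mathrm{d}y\asymp|x|^{2\alpha-d}$; if $\alpha=d/2$, restricting to $2|x|<|y|<c_0$ (where $|x-y|\asymp|y|$) gives $K(x)\gtrsim\int_{2|x|<|y|<c_0}|y|^{-d}\,\mathrm{d}y\asymp\log(1/|x|)$; if $\alpha>d/2$, integrating over the fixed annulus $\{2<|y|<3\}$, on which both factors are bounded below uniformly for $|x|\le1$, suffices (this last case is in fact unnecessary, as Assumption \ref{main_ass_corr}(ii) is then vacuous). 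The care needed in choosing the subregions and tracking the exponents through the three regimes (sign of $d/2-\alpha$) is the crux of the argument; fact (B) and the assembly below are routine.

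For (B), write $R_{\beta}^{r}=h_1+h_2$ with $h_1:=R_{\beta}^{r}1_{\{|\cdot|\le2\}}\in L_1(\bR^d)$ supported in $\overline{B_2(0)}$ and $0\le h_2(x)\le N\mathrm{e}^{-r|x|/2}$, so that $K=h_1\ast h_1+2\,h_1\ast h_2+h_2\ast h_2$: the term $h_1\ast h_1$ is supported in $\overline{B_4(0)}$ and, by the same splitting as in (A) but now with $|x|\ge1/2$ so that the two factors cannot be singular simultaneously, is bounded on $\{|x|\ge1/2\}$, while $h_1\ast h_2$ and $h_2\ast h_2$ are globally bounded and, via $|y|+|x-y|\ge|x|$, decay like $N\mathrm{e}^{-c|x|}$. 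Finally one closes the cycle (i)$\Rightarrow$(ii)$\Rightarrow$(iii)$\Rightarrow$(i): for (i)$\Rightarrow$(ii), split $\int_{\bR^d}K\,\mathrm{d}\mu$ over the complement of $B_1(0)$ (finite by (B), local finiteness of $\mu$ on $\{1\le|x|\le4\}$, and temperedness), over $B_{1/2}(0)$ (finite by the upper bound in (A) and Assumption \ref{main_ass_corr}(ii)), and over $\{1/2\le|x|<1\}$ (finite by (B) and local finiteness); (ii)$\Rightarrow$(iii) is trivial since $K\ge0$; and for (iii)$\Rightarrow$(i), if $\alpha>d/2$ there is nothing to prove, while if $\alpha\le d/2$ the lower bound in (A) shows that \eqref{24.04.09.14.33} forces $\int_{|x|<1/2}|x|^{2\alpha-d}\mu(\mathrm{d}x)<\infty$, resp. $\int_{|x|<1/2}\log(1/|x|)\mu(\mathrm{d}x)<\infty$, which is Assumption \ref{main_ass_corr}(ii) since the relevant kernel is bounded on $\{1/2\le|x|<1\}$ where $\mu$ is finite.
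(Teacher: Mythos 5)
Your proposal is correct and follows essentially the same route as the paper: two-sided pointwise asymptotics of $R_{\delta_0(1-\gamma)}^{r}\ast R_{\delta_0(1-\gamma)}^{r}$ near the origin in the three regimes determined by the sign of $d/2-\alpha_{\delta_0,\gamma,r}$, combined with exponential decay at infinity and temperedness of $\mu$. The only differences are cosmetic — you split the convolution integral into three regions instead of two and prove the far-field decay of the convolution directly rather than citing \cite[Lemma 4.3]{CH2021}.
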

\begin{proof}
    The implication from condition $(ii)$ to $(iii)$ is straightforward.
    By examining the exponential decay of $R_{\delta_0(1-\gamma)}$ as $|x| \to \infty$, as outlined in \eqref{24.04.09.15.21} and corroborated by \cite[Lemma 4.3]{CH2021}, we infer that the convolution $(R_{\delta_0(1-\gamma)}^r \ast R_{\delta_0(1-\gamma)}^r)(x)$ exhibits similar exponential decay for large $x$.
    Furthermore, the characterization of $\pi$ as a tempered measure, detailed in \cite[Definition 2.1]{CH2021}, stipulates the existence of some $k \geq 0$ such that
\begin{equation}
\label{24.04.21.15.46}
A_{\pi}:=\int_{\mathbb{R}^d}\frac{1}{(1+|x|^2)^{k/2}}\pi(\mathrm{d}x) < \infty.
\end{equation}
This integral condition on $\pi$ robustly supports the assertion that $(iii)$ implies $(ii)$.
We proceed by establishing the equivalence between conditions $(i)$ and $(iii)$ through three cases based on the value of $\alpha_{\delta_0,\gamma,r}$.

    \textbf{Case 1.} Assume $\alpha_{\delta_0,\gamma,r}\in(d/2,d)$.
    Given \eqref{24.04.21.15.46}, we demonstrate the existence of constants $c$ and $C$ such that for all $|x|<1$,
$$
c\leq(R_{\delta_0(1-\gamma)}^r\ast R_{\delta_0(1-\gamma)}^r)(x)\leq C.
$$
    When $\alpha_{\delta_0,\gamma,r} \in (d/2,d)$, this translates to $r < \frac{d}{2(d-\delta_0(1-\gamma))}$, allowing us to
\begin{align*}
(R_{\delta_0(1-\gamma)}^r\ast R_{\delta_0(1-\gamma)}^r)(x)&\leq \|R_{\delta_0(1-\gamma)}\|_{L_{2r}(\bR^d)}=:C<\infty.
\end{align*}
Conversely,
\begin{align*}
(R_{\delta_0(1-\gamma)}^r \ast R_{\delta_0(1-\gamma)}^r)(x) &> N \int_{1/2 < |y| < 3} R_{\delta_0(1-\gamma)}(x-y)^r R_{\delta_0(1-\gamma)}(y)^r , \mathrm{d}y,
\end{align*}
where the strict positivity and radial decrease of $R_{\delta_0(1-\gamma)}$ imply for $1/2 < |y| < 3$,
$$
R_{\delta_0(1-\gamma)}(y)\geq R_{\delta_0(1-\gamma)}(3,0,\cdots,0)>0.
$$
Thus,
\begin{align*}
    &\int_{1/2<|y|<3}R_{\delta_0(1-\gamma)}(x-y)^rR_{\delta_0(1-\gamma)}(y)^r\mathrm{d}y\\
    &\geq R_{\delta_0(1-\gamma)}(3,0,\cdots,0)\int_{3/2<|x-y|<2}R_{\delta_0(1-\gamma)}(x-y)^r\mathrm{d}y=:c>0.
\end{align*}

\textbf{Case 2.} Assume $\alpha_{\delta_0,\gamma,r}\in(0,d/2)$. The objective here is to establish that there exist positive constants $c$ and $C$ such that:
$$
c|x|^{2\alpha_{\delta_0,\gamma,r}-d}\leq(R_{\delta_0(1-\gamma)}^r\ast R_{\delta_0(1-\gamma)}^r)(x)\leq C|x|^{2\alpha_{\delta_0,\gamma,r}-d},\quad \forall |x|<1.
$$
From \eqref{24.04.09.15.21}, for any $x \in \mathbb{R}^d \setminus \{0\}$,
\begin{align*}
(R_{\delta_0(1-\gamma)}^r \ast R_{\delta_0(1-\gamma)}^r)(x) &\leq N \int_{\mathbb{R}^d} |y|^{\alpha_{\delta_0,\gamma,r} - d} |x-y|^{\alpha_{\delta_0,\gamma,r} - d} \mathrm{d}y.
\end{align*}
We analyze this integral by dividing $\mathbb{R}^d$ into two regions, $D_1(x):=\{y\in\bR^d:|y|\leq 2|x|\}$ and $D_2(x):=\{y\in\bR^d:|y|> 2|x|\}$, and evaluate
\begin{equation}
\label{24.04.09.19.59}
\begin{aligned}
    \int_{D_1(x)}|y|^{\alpha_{\delta_0,\gamma,r}-d}|x-y|^{\alpha_{\delta_0,\gamma,r}-d}\mathrm{d}y&\leq N|x|^{\alpha_{\delta_0,\gamma,r}-d}\int_{|y|\leq 3|x|}|y|^{\alpha_{\delta_0,\gamma,r}-d}\mathrm{d}y\\
    &=N|x|^{2\alpha_{\delta_0,\gamma,r}-d}.
\end{aligned}
\end{equation}
For $y\in D_2(x)$, note the inequalities $|x| \leq |y| - |x| \leq |x-y| \leq |x| + |y| \leq \frac{3|y|}{2}$, which ensure
$$
\int_{D_2(x)}|y|^{\alpha_{\delta_0,\gamma,r}-d}|x-y|^{\alpha_{\delta_0,\gamma,r}-d}\mathrm{d}y\leq N\int_{|x-y|\geq |x|}|x-y|^{2\alpha_{\delta_0,\gamma,r}-2d}\mathrm{d}y=N|x|^{2\alpha_{\delta_0,\gamma,r}-d}.
$$
Conversely, since $|x|<1$, utilizing \eqref{24.04.21.14.50},
\begin{align*}
    (R_{\delta_0(1-\gamma)}^r\ast R_{\delta_0(1-\gamma)}^r)(x)&\geq \int_{|x|/2<|y|<3|x|/2}R_{\delta_0(1-\gamma)}(x-y)^rR_{\delta_0(1-\gamma)}(y)^r\mathrm{d}y\\
    &\geq N|x|^{\alpha_{\delta_0,\gamma,r}-d}\int_{|x|/2<|y|<3|x|/2}R_{\delta_0(1-\gamma)}(x-y)^r\mathrm{d}y\\
    &\geq N|x|^{\alpha_{\delta_0,\gamma,r}-d}\int_{|x-y|<|x|/2}R_{\delta_0(1-\gamma)}(x-y)^r\mathrm{d}y=N|x|^{2\alpha_{\delta_0,\gamma,r}-d}.
\end{align*}

\textbf{Case 3.} Consider the case where $\alpha_{\delta_0,\gamma,r} = d/2$.
The objective in this scenario is to demonstrate the existence of positive constants $c$ and $C$ such that
$$
c\log(1/|x|)\leq(R_{\delta_0(1-\gamma)}^r\ast R_{\delta_0(1-\gamma)}^r)(x)\leq C(1+\log(1/|x|)),\quad \forall |x|<1.
$$
For $x\in\bR^d\setminus\{0\}$,
\begin{align*}
(R_{\delta_0(1-\gamma)}^r\ast R_{\delta_0(1-\gamma)}^r)(x)&\leq N\int_{|y|\leq2}|y|^{r(\delta_0(1-\gamma)-d)}|R_{\delta_0(1-\gamma)}(x-y)|^r\mathrm{d}y\\
&\quad + N\int_{|y|>2}\mathrm{e}^{-\frac{|y|}{2}}|R_{\delta_0(1-\gamma)}(x-y)|^r\mathrm{d}y\\
&\leq N\int_{|y|\leq2}|y|^{-d/2}|R_{\delta_0(1-\gamma)}(x-y)|^r\mathrm{d}y+N\|R_{\delta_0(1-\gamma)}\|_{L_r(\bR^d)}.
\end{align*}
Given that $\|R_{\delta_0(1-\gamma)}\|_{L_r(\mathbb{R}^d)} < \infty$, our analysis focuses predominantly on
$$
\int_{|y|\leq2}|y|^{-d/2}|R_{\delta_0(1-\gamma)}(x-y)|^r\mathrm{d}y.
$$
Since $|x| < 1$, we have $|x-y| \leq 3$, and thus
\begin{align*}
    &\int_{|y|\leq2}|y|^{-d/2}|R_{\delta_0(1-\gamma)}(x-y)|^r\mathrm{d}y\\
    &\leq N\int_{|y|\leq2}|y|^{-d/2}|x-y|^{-d/2}\mathrm{d}y\\
    &=N\int_{|y|\leq|x|/2}\cdots\mathrm{d}y+N\int_{|x|/2<|y|\leq2|x|}\cdots\mathrm{d}y+N\int_{2|x|<|y|\leq2}\cdots\mathrm{d}y.
\end{align*}
By \eqref{24.04.09.19.59},
$$
\int_{|y|\leq|x|/2}|y|^{-d/2}|x-y|^{-d/2}\mathrm{d}y+\int_{|x|/2<|y|\leq2|x|}|y|^{-d/2}|x-y|^{-d/2}\mathrm{d}y\leq N,
$$
where $N$ remains independent of $x$.
For $2|x| < |y| \leq 2$, it holds that
$$
\frac{|y|}{2}\leq|y|-|x|\leq|x-y|\leq |x|+|y|<\frac{3|y|}{2}.
$$
Thus,
\begin{equation}
\label{24.04.21.20.38}
\int_{2|x|<|y|\leq2}|y|^{-d/2}|x-y|^{-d/2}\mathrm{d}y\simeq \int_{2|x|<|y|\leq2}|y|^{-d}\mathrm{d}y=N\log(1/|x|).
\end{equation}
Consequently, for $|x| < 1$,
$$
(R_{\delta_0(1-\gamma)}^r\ast R_{\delta_0(1-\gamma)}^r)(x)\leq N(1+\log(1/|x|)).
$$
Conversely, employing \eqref{24.04.21.14.50} and \eqref{24.04.21.20.38}, we find
\begin{align*}
    (R_{\delta_0(1-\gamma)}^r\ast R_{\delta_0(1-\gamma)}^r)(x)&\geq \int_{|x|/2<|y|\leq 2}R_{\delta_0(1-\gamma)}(x-y)^rR_{\delta_0(1-\gamma)}(y)^r\mathrm{d}y\\
    &\geq N\int_{|x|/2<|y|\leq 2}|y|^{\alpha_{\delta_0,\gamma,r}-d}|x-y|^{\alpha_{\delta_0,\gamma,r}-d}\mathrm{d}y\simeq \log(1/|x|).
\end{align*}
The proposition is proved.
\end{proof}

We conclude this subsection by demonstrating the validity of Proposition \ref{24.04.21.21.16}.
\begin{proof}[Proof of Proposition \ref{24.04.21.21.16}]
Using H\"older's inequality, we obtain for $x\in\bR^d\setminus\{0\}$,
\begin{align*}
&(R_{\delta_0(1-\gamma)}^{r_1}\ast R_{\delta_0(1-\gamma)}^{r_1})(x)\\
&\leq \|R_{\delta_0(1-\gamma)}\|_{L_{r_1}(\bR^d)}^{r_1\left(1-\frac{r_1}{r_2}\right)}(R_{\delta_0(1-\gamma)}^{r_2}\ast R_{\delta_0(1-\gamma)}^{r_1})(x)^{\frac{r_1}{r_2}}\\
&\leq \|R_{\delta_0(1-\gamma)}\|_{L_{r_1}(\bR^d)}^{r_1\left(1-\frac{r_1}{r_2}\right)}\|R_{\delta_0(1-\gamma)}\|_{L_{r_2}(\bR^d)}^{r_1\left(1-\frac{r_1}{r_2}\right)}(R_{\delta_0(1-\gamma)}^{r_2}\ast R_{\delta_0(1-\gamma)}^{r_2})(x)^{\frac{r_1^2}{r_2^2}}.
\end{align*}
Given the finite norms described in \eqref{24.04.21.20.52},
$$
\|R_{\delta_0(1-\gamma)}\|_{L_{r_1}(\bR^d)}^{r_1\left(1-\frac{r_1}{r_2}\right)}\|R_{\delta_0(1-\gamma)}\|_{L_{r_2}(\bR^d)}^{r_1\left(1-\frac{r_1}{r_2}\right)}=:C_0<\infty.
$$
Thus, we can estimate
\begin{equation}
\label{24.04.09.12.50}
\begin{aligned}
    \nu_{\delta_0,\gamma,r_1}&:=\int_{\bR^d}(R_{\delta_0(1-\gamma)}^{r_1}\ast R_{\delta_0(1-\gamma)}^{r_1})(x)\pi(\mathrm{d}x)\\
    &\leq C_0\int_{\bR^d}(R_{\delta_0(1-\gamma)}^{r_2}\ast R_{\delta_0(1-\gamma)}^{r_2})(x)^{\frac{r_1^2}{r_2^2}}\pi(\mathrm{d}x).
\end{aligned}
\end{equation}
Given that $(R_{\delta_0(1-\gamma)}^r\ast R_{\delta_0(1-\gamma)}^r)(x)$ decays exponentially as $|x|\to\infty$ and considering  $\pi$ as a tempered measure,
\begin{equation}
\label{24.04.21.21.11}
    \int_{\bR^d}(R_{\delta_0(1-\gamma)}^{r_2}\ast R_{\delta_0(1-\gamma)}^{r_2})(x)^{\frac{r_1^2}{r_2^2}}\pi(\mathrm{d}x)<\infty \Longleftrightarrow \int_{|x|\leq 1}(R_{\delta_0(1-\gamma)}^{r_2}\ast R_{\delta_0(1-\gamma)}^{r_2})(x)^{\frac{r_1^2}{r_2^2}}\pi(\mathrm{d}x)<\infty.
\end{equation}
Employing H\"older's inequality once more, we have
\begin{equation}
\label{24.04.21.21.10}
\begin{aligned}
    &\int_{|x|\leq 1}(R_{\delta_0(1-\gamma)}^{r_2}\ast R_{\delta_0(1-\gamma)}^{r_2})(x)^{\frac{r_1^2}{r_2^2}}\pi(\mathrm{d}x)\\
    &\leq A_{\pi}^{1-\frac{r_1^2}{r_2^2}}\left(\int_{|x|\leq 1}(R_{\delta_0(1-\gamma)}^{r_2}\ast R_{\delta_0(1-\gamma)}^{r_2})(x)(1+|x|^2)^{\frac{k}{2}\left(\frac{r_2^2}{r_1^2}-1\right)}\pi(\mathrm{d}x)\right)^{\frac{r_1^2}{r_2^2}}\\
    &\leq N(A_{\pi},k,r_1,r_2)\nu_{\delta_,\gamma,r_2}^{\frac{r_1^2}{r_2^2}},
\end{aligned}
\end{equation}
where $A_{\pi}$ is the constant defined in \eqref{24.04.21.15.46}. 
Consequently,
\begin{align*}
    \text{Assumption \ref{main_ass_corr} $(\delta_0,\gamma,r_2)$ holds}&\underset{\text{Proposition \ref{24.04.21.21.05}}}{\Longleftrightarrow} \nu_{\delta_0,\gamma,r_2}<\infty \\
    &\underset{\text{\eqref{24.04.09.12.50},\eqref{24.04.21.21.11},\eqref{24.04.21.21.10}}}{\implies} \nu_{\delta_0,\gamma,r_1}<\infty\\
    &\underset{\text{Proposition \ref{24.04.21.21.05}}}{\Longleftrightarrow} \text{Assumption \ref{main_ass_corr} $(\delta_0,\gamma,r_1)$ holds}
\end{align*}
The proposition is thereby proven.
\end{proof}

\subsection{Main results}

Recall that $(\Omega, \bF, \bP)$ is a complete probability space with a filtration $\{\bF_t\}_{t\geq0}$ satisfying the usual conditions and $\cP$ is the predictable $\sigma$-field related to $\bF_t$. 
First, we introduce our solution spaces.

\begin{defn} \label{def:sto-banach}
For a bounded stopping time $\tau\leq T$, let us denote 
$$
\opar0,\tau\cbrk:=\{ (\omega,t):0<t\leq \tau(\omega) \},\quad\cpar0,\tau\cbrk:=\{ (\omega,t):0\leq t\leq \tau(\omega) \}.
$$
Define {\it{stochastic Banach spaces}} as
\begin{gather*}
\mathbb{H}_{p,q}^{\phi,\gamma}(\tau) := L_q(\opar0,\tau\cbrk, \mathcal{P}, \bP \times \mathrm{d}t ; H_{p}^{\phi,\gamma}(\bR^d)),\\
\mathbb{H}_{p,q}^{\phi,\gamma}(\tau,\ell_2) := L_q(\opar0,\tau\cbrk,\mathcal{P}, \bP \times \mathrm{d}t;H_{p}^{\phi,\gamma}(\bR^d;\ell_2)),\\
U_{p,q}^{\phi,\gamma} :=  L_q(\Omega,\bF_0, \bP ; (H_p^{\phi,\gamma}(\bR^d),H_p^{\phi,\gamma-2}(\bR^d))_{1/q,q}),
\end{gather*}
where $(X_0,X_1)_{\theta,q}$ is the real interpolation space between the interpolation couple $(X_0,X_1)$ with parameters $(\theta,q)\in(0,1)\times(1,\infty)$.
For convenience, we write $\bL_{p,q}(\tau):=\bH^{\phi,0}_{p,q}(\tau)$ and $\bL_{p,q}(\tau,\ell_2):=\bH^{\phi,0}_{p,q}(\tau,\ell_2)$.

\end{defn}

\begin{defn}[Global solution space] \label{def_of_sol_1}
Let $\tau$ be a bounded stopping time and $u \in \bH_{p,q}^{\phi,\gamma}(\tau)$.
\begin{enumerate}[(i)]
\item 
We write $u\in\cH^{\phi,\gamma}_{p,q}(\tau)$ if there exist $(u_0,f,g)\in U_{p,q}^{\phi,\gamma}\times
\bH_{p,q}^{\phi,\gamma-2}(\tau)\times\bH_{p,q}^{\phi,\gamma-1}(\tau,\ell_2)$ such that
\begin{equation*}
\mathrm{d}u = f\,\mathrm{d}t+\sum_{k=1}^{\infty} g^k\, \mathrm{d}w_t^k,\quad   t\in (0, \tau]\,; \quad u(0,\cdot) = u_0
\end{equation*}
in the sense of distributions, \textit{i.e.}, for any $\varphi\in \cS(\bR^d)$, the equality
\begin{equation} \label{def_of_sol_2}
(u(t,\cdot),\varphi) = (u_0,\varphi) + \int_0^t(f(s,\cdot),\varphi)\mathrm{d}s + \sum_{k=1}^{\infty} \int_0^t(g^k(s,\cdot),\varphi)\mathrm{d}w_s^k
\end{equation}
holds for all $t\in [0,\tau]$ almost surely.
In this case, we write
\begin{equation*}
u(0):=u_0,\quad\mathbb{D}u:= f,\quad \mathbb{S}u:=g.
\end{equation*}

\item
The norm of $\cH_{p,q}^{\phi,\gamma}(\tau)$ is defined by
\begin{equation*}
\| u \|_{\cH_{p,q}^{\phi,\gamma}(\tau)} :=  \| u \|_{\mathbb{H}_{p,q}^{\phi,\gamma}(\tau)} + \| \mathbb{D}u \|_{\mathbb{H}_{p,q}^{\phi,\gamma-2}(\tau)} + \| \mathbb{S}u \|_{\mathbb{H}_{p,q}^{\phi,\gamma-1}(\tau,\ell_2)} + \| u(0,\cdot) \|_{U_{p,q}^{\phi,\gamma}}.
\end{equation*}
\end{enumerate}
\end{defn} 

\begin{rem}
The pair $(f,g)$ in \eqref{def_of_sol_2} is uniquely determined by $u$.  
Indeed, if two pairs $(f_1,g_1)$ and $(f_2,g_2)$ satisfy \eqref{def_of_sol_2}, then for all $\varphi \in \mathcal{S}(\mathbb{R}^d)$,
\begin{equation}
\label{25.08.12.11.31}
    \int_0^t (f_2(s,\cdot) - f_1(s,\cdot), \varphi) \,\mathrm{d}s
    = \sum_{k=1}^\infty \int_0^t (g_1^k(s,\cdot) - g_2^k(s,\cdot), \varphi) \,\mathrm{d}w_s^k,
\end{equation}
for all $t \in [0,\tau]$ almost surely.  
Here, the left-hand side is a process of finite variation, while the right-hand side is a continuous local martingale.  
By \cite[Proposition~1.2, Chapter~IV]{RY1999}, both processes must vanish identically.  
Thus $(f,g)$ is uniquely determined by $u$, justifying the notation $\mathbb{D}u$ and $\mathbb{S}u$.
\end{rem}

\begin{defn}[Local solution space]
Let $\tau$ be a stopping time. 
We write $u\in\cH_{p,q,loc}^{\phi,\gamma}(\tau)$ if there exists a sequence of stopping times $\tau_n\uparrow \tau$ almost surely such that
\begin{enumerate}[(i)]
    \item each $\tau_n$ is bounded, \textit{i.e.}
    $$
    \sup_{\omega\in\Omega}\tau_n(w)<\infty,
    $$
    \item $u\in \cH_{p,q}^{\phi,\gamma}(\tau_n)$ for each $n$.
\end{enumerate} 
We say $u = v$ in $\cH_{p,q,loc}^{\phi,\gamma}(\tau)$ if there is a sequence of stopping times $\tau_n\uparrow\tau$ almost surely such that 
\begin{enumerate}[(i)]
    \item each $\tau_n$ is bounded, \textit{i.e.}
    $$
    \sup_{\omega\in\Omega}\tau_n(w)<\infty,
    $$
    \item $u = v$ in $\cH_{p,q}^{\phi,\gamma}(\tau_n)$ for each $n$.
\end{enumerate} 
For convenience, $\tau$ is omitted when $\tau = \infty$. 
\end{defn}

Here are the main results and assumptions of this article.
The proof will be given in Section \ref{24.03.21.14.39}.
\begin{assumption}[$\delta_0$]
    \label{24.03.14.11.58}
    The function $\phi:(0,\infty)\to[0,\infty)$ is a Bernstein function satisfying
\begin{equation}
\label{23.06.28.17.44}
    c_0\left(\frac{R}{r}\right)^{\delta_0}\leq\frac{\phi(R)}{\phi(r)},\quad \forall 0<r\leq R<\infty,\quad (c_0>0).
\end{equation}
\end{assumption}
This condition is a \emph{weak lower scaling condition} for the Bernstein function $\phi$ with index $\delta_0 \in (0,1]$. 
It ensures that $\phi$ grows at least polynomially at all scales and is a standard hypothesis in the study of nonlocal operators generated by subordinate Brownian motions.
\begin{example}
The following Bernstein functions satisfy Assumption \ref{24.03.14.11.58} $(\delta_0)$.  
For additional examples and details, see \cite[Chapter 16]{SSV2012}.
\begin{enumerate}[(1)]
    \item Stable subordinators: $\phi(\lambda)=\lambda^{\beta}$, $0<\beta\leq1$ $\implies$ Assumption \ref{24.03.14.11.58} $(\beta)$ holds.
    \item Sum of stable subordinators: $\phi(\lambda)=\lambda^{\beta_1}+\lambda^{\beta_2}$, $0<\beta_1,\beta_2\leq 1$ $\implies$ Assumption \ref{24.03.14.11.58} $(\min(\beta_1,\beta_2))$ holds.
    \item Stable with logarithmic correction: $\phi(\lambda)=\lambda^{\beta}(\log(1+\lambda))^{\gamma}$, $\beta\in(0,1)$, $\gamma\in(-\beta,1-\beta)$ $\implies$ Assumption  \ref{24.03.14.11.58} $(\beta)$ holds.
    \item Relativistic stable subordinators: $\phi(\lambda)=(\lambda+m^{1/\beta})^{\beta}-m$, $\beta\in(0,1)$, $m>0$ $\implies$ Assumption  \ref{24.03.14.11.58} $(\beta)$ holds.
    \item Conjugate geometric stable subordinators: $\phi(\lambda)=\frac{\lambda}{\log(1+\lambda^{\beta/2})}$, $\beta\in(0,2)$ $\implies$ Assumption  \ref{24.03.14.11.58} $(1-\frac{\beta}{2})$ holds.
\end{enumerate}
\end{example}

\begin{rem}
The condition \eqref{23.06.28.17.44} gives us to the characterization of $U_{p,q}^{\phi,\gamma}$.
By \cite[Proposition A.3]{CLSW2023trace}, we have
    $$
    (H_p^{\phi,\gamma}(\bR^d),H_{p}^{\phi,\gamma-2}(\bR^d))_{1/q,q}=B_{p,q}^{\phi,\gamma-2/q}(\bR^d),
    $$
    where $B_{p,q}^{\phi,\gamma-2/q}(\bR^d)$ is a set of tempered distrubution $f$ satisfying
    $$
    \|f\|_{B_{p,q}^{\phi,\gamma-2/q}(\bR^d)}:=\|S_0f\|_{L_p(\bR^d)}+\left(\sum_{j=1}^{\infty}\phi(2^{2j})^{(q\gamma-2)/2}\|\Delta_jf\|_{L_p(\bR^d)}^q\right)^{1/q}<\infty.
    $$
    Here $\Delta_j$ is the $j$-th Littlewood-Paley projection operator defined by
    $$
    \Delta_jf(x):=(\Psi_j\ast f)(x),\quad \Psi_j(x):=2^{jd}\Psi(2^jx),
    $$
    and $S_0:=\sum_{j\leq0}\Delta_j$, where $\Psi$ is a function in $\cS(\bR^d)$ which satisfies $supp(\cF[\Psi])\subseteq \{\xi\in\bR^d:2^{-1}\leq|\xi|\leq 2\}$ and
    $$
    \sum_{j\in\bZ}\Psi(2^{-j}\xi)=1,\quad \forall \xi\in\bR^d.
    $$
Here, $\mathcal{F}[\Psi]$ is the Fourier transform of $\Psi$.    If $\gamma>2/q$, then we also have
    $$
    \|f\|_{B_{p,q}^{\phi,\gamma-2/q}(\bR^d)}\simeq \|f\|_{L_p(\bR^d)}+\left(\int_{\bR^d}\phi(|h|^{-1})^{q\gamma-2}\|\mathcal{D}_h^Lf\|_{L_p(\bR^d)}^q\frac{\mathrm{d}h}{|h|^d}\right)^{1/q},
    $$
    where $\mathcal{D}_hf(x):=f(x+h)-f(x)$, $\mathcal{D}_h^nf(x):=\mathcal{D}_h(\mathcal{D}_h^{n-1}f)(x)$, and $L$ is the integer greater than $\gamma-2/q$.
\end{rem}

\begin{assumption}
\label{ass_nl_term}
    Let $(F,B,\varphi)=(F(\omega,t,x,u),B(\omega,t,x,u),\varphi(\omega,t,x,u))$ be $\cP\times\cB(\bR^d)\times\cB(\bR)$-measurable functions satisfying the followings:
    \begin{enumerate}[(i)]
        \item For $m\in(0,\infty)$,
        $$
        \sup_{|u|\leq 2m}\left(\left|\frac{\partial F}{\partial u}\right|+\left|\frac{\partial B}{\partial u}\right|+\left|\frac{\partial\varphi}{\partial u}\right|\right)\leq C_m,\quad \forall (\omega,t,x)\in\Omega\times[0,\infty)\times\bR^d.
        $$
        \item There exist $c_{\mathrm{s.d.}},c_{\mathrm{b.}},c_{\mathrm{s.m.}}>0$ and $\lambda_{\mathrm{s.d.}},\lambda_{\mathrm{b.}},\lambda_{\mathrm{s.m.}}\geq0$ such that
        $$
        -F(u)\geq c_{\mathrm{s.d.}}|u|^{1+\lambda_{\mathrm{s.d.}}},\quad|B(u)|\leq c_{\mathrm{b.}}^1|u|+c_{\mathrm{b.}}^2|u|^{1+\lambda_{\mathrm{b.}}},\quad |\varphi(u)|\leq c_{\mathrm{s.m.}}^1|u|+c_{\mathrm{s.m.}}^2|u|^{1+\lambda_{\mathrm{s.m.}}},
        $$
        for all $(\omega,t,x)\in\Omega\times[0,\infty)\times\bR^d$.
    \end{enumerate}
\end{assumption}
\begin{assumption}[$\delta_0,\gamma$]
\label{ass_coeff}
    Let $(\vec{b},\zeta,\xi)=(\vec{b}(\omega,t,x),\zeta(\omega,t,x),\xi(\omega,t,x))$ be $\cP\times\cB(\bR^d)$-measurable functions satisfying
    \begin{enumerate}[(i)]
        \item $\zeta$ is nonnegative and bounded away from zero.
        \item $\vec{b}=(b^1,\cdots,b^d)\in\bR^d$ and $b^i$ is independent of $x^i$.
        \item $\vec{b}\equiv0$ if $\delta_0(2-\gamma)\leq1$.
        \item for all $(\omega,t)\in \Omega\times[0,\infty)$, 
        $$        \|\vec{b}(\omega,t,\cdot)\|_{C^2(\bR^d;\mathbb{R}^d)}+\||\zeta(\omega,t,\cdot)|+|\xi(\omega,t,\cdot)|\|_{L_{\infty}(\bR^d)}\leq K.
        $$
    \end{enumerate}
\end{assumption}

\begin{thm}
\label{main}
Let $p,q\in(2,\infty)$, $\delta_0\in(0,1]$, $\gamma\in(0,1)$ and 
\begin{equation*}
1\leq r < \min\left(\frac{p}{2}, \frac{d}{d-\delta_0(1-\gamma)}\right).
\end{equation*}
Suppose that Assumptions \ref{main_ass_corr} ($\delta_0,\gamma,r$), \ref{24.03.14.11.58} ($\delta_0$), \ref{ass_nl_term} and \ref{ass_coeff}  ($\delta_0,\gamma$) hold.
Suppose also that
\begin{align}
        0<&\lambda_{\mathrm{b.}}+\frac{d+1-\delta_0(2-\gamma)}{d}-\frac{1}{p}-\frac{\lambda_{\mathrm{s.d.}}}{p\vee q}<1, \label{24.04.24.19.45}\\
        0<&\lambda_{\mathrm{s.m.}}+\frac{1}{2r}+\frac{1}{2}-\frac{1}{p}-\frac{\lambda_{\mathrm{s.d.}}}{p\vee q}<1, \label{24.04.24.19.46}
\end{align}
and
\begin{align}
\label{24.04.06.19.26}
\frac{2}{q}+\frac{d}{\delta_0 p}<\gamma.
\end{align}
\begin{enumerate}[(i)]
    \item Then, for any nonnegative $u_0\in U_{p,q}^{\phi,\gamma}\cap L_1(\Omega\times\bR^d)$, there exists a unique nonnegative solution $u\in\cH_{p,q,loc}^{\phi,\gamma}$ to
    \begin{equation}
\label{23.06.28.17.21}
    \partial_tu=\phi(\Delta)u+\zeta F(u)+\vec{b}\cdot\nabla_{x}(B(u))+\xi\varphi(u)\dot{W};\quad u(0)=u_0.
\end{equation}
    \item For any $\alpha,\beta>0$ satisfying
\begin{equation}
\label{25.08.12.17.42}
\frac{1}{q}\leq \alpha<\beta\leq\frac{\gamma}{2}-\frac{d}{2\delta_0 p},
\end{equation}
the solution $u$ in $(i)$ satisfies
$$
    |u|_{C_{t,x}^{\alpha-\frac{1}{q},\phi_{\gamma-2\beta}-\frac{d}{p}}([0,\tau]\times\bR^d)}<\infty
    $$
    for all bounded stopping time $\tau$ (a.s.).
    Here,
    $$
    |u|_{C_{t,x}^{\alpha-\frac{1}{q},\phi_{\gamma-2\beta}-\frac{d}{p}}([0,\tau]\times\bR^d)}:=|u|_{C([0,\tau]\times\bR^d)}+[u]_{C_{t,x}^{\alpha-\frac{1}{q},\phi_{\gamma-2\beta}-\frac{d}{p}}([0,\tau]\times\bR^d)},
    $$
    where 
    $$
    [u]_{C_{t,x}^{\alpha-\frac{1}{q},\phi_{\gamma-2\beta}-\frac{d}{p}}([0,\tau]\times\bR^d)}:=\sup_{(t,x),(s,y)\in[0,\tau]\times\mathbb{R}^d}\frac{|u(t,x)-u(s,y)|}{|t-s|^{\alpha-\frac{1}{q}}+\phi(|x-y|^{-2})^{-\frac{\gamma}{2}+\beta}|x-y|^{-\frac{d}{p}}}.
    $$
    If $\alpha=1/q$, then
    $$
    |u|_{C_{t,x}^{\alpha-\frac{1}{q},\phi_{\gamma-2\beta}-\frac{d}{p}}([0,\tau]\times\bR^d)}:=|u|_{C([0,\tau]\times\bR^d)}+\sup_{t\in[0,\tau]}\sup_{x,y\in\mathbb{R}^d}\frac{|u(t,x)-u(t,y)|}{\phi(|x-y|^{-2})^{-\frac{\gamma}{2}+\beta}|x-y|^{-\frac{d}{p}}}.
    $$
\end{enumerate} 
\end{thm}

\begin{rem}
    The solution in Theorem \ref{main} means that there exists a sequence of stopping times $\tau_n\uparrow\infty$ almost surely,
    $$
    \sup_{\omega\in\Omega}\tau_n(\omega)<\infty,\quad u\in\cH_{p,q}^{\phi,\gamma}(\tau_n),
    $$
    and for any $\eta\in \cS(\bR^d)$, the following equality holds for all $t\in[0,\tau_n]$ almost surely;
    \begin{align*}
        (u(t,\cdot),\eta)&=(u_0,\eta)+\int_0^t(u(s,\cdot),\phi(\Delta)\eta)+(\zeta(s,\cdot) F(s,u(s,\cdot)),\eta)\mathrm{d}s\\
        &\quad+\int_0^t(B(s,u(s,\cdot)),\vec{b}(s,\cdot)\cdot\nabla_{x}\eta)\mathrm{d}s+\sum_{k=1}^{\infty}\int_0^t(\xi(s,\cdot)\varphi(s,u(s,\cdot))\pi\ast e_k,\eta)\mathrm{d}w_s^k.
    \end{align*}
\end{rem}

\begin{rem}
\label{25.08.13.15.15}
    We make several remarks on Theorem \ref{main}.
The parameters appearing in \eqref{24.04.24.19.45}--\eqref{24.04.06.19.26} have the following interpretation:
\begin{itemize}
    \item $p,q$: integrability exponents for the solution $u$ (space and time, respectively);
    \item $\gamma$: spatial regularity index of $u$;
    \item $\delta_0$: weak lower scaling exponent of $\phi$;
    \item $r$: spatial smoothness parameter of the noise $\dot{W}$ (via its correlation class);
    \item $\lambda_{\mathrm{s.d.}},\, \lambda_{\mathrm{b.}},\, \lambda_{\mathrm{s.m.}}$: nonlinearity exponents of the strong dissipative term, the generalized Burgers-type term, and the superlinear multiplicative noise term, respectively.
\end{itemize}
    
    \begin{enumerate}[(i)]
        \item \emph{(Effect of $\phi$ on $\lambda_{\mathrm{b.}}$ and $\lambda_{\mathrm{s.m.}}$)}
        By the Mikhlin multiplier theorem and Assumption~\ref{24.03.14.11.58} $(\delta_0)$, the Fourier multiplier
$$
m(\xi):=\frac{(1+|\xi|^2)^{\delta_0/2}}{(1+\phi(|\xi|^2))^{1/2}}
$$
is bounded on $L_p(\mathbb{R}^d)$ for $1<p<\infty$.
Hence there exists $C=C(d,p,\delta_0,\phi)$ such that
$$
\|u\|_{L_p(\mathbb{R}^d)}+\|\Delta^{\delta_0/2}u\|_{L_p(\mathbb{R}^d)} \leq C\,(\|u\|_{L_p(\mathbb{R}^d)}+\|\phi(\Delta)^{1/2}u\|_{L_p(\mathbb{R}^d)}).
$$
In particular, $\delta_0$ provides a \emph{baseline spatial regularity} for every $u\in H_p^{\phi,1}(\mathbb{R}^d)$. 
Thus, as $\delta_0\uparrow 1$ (\textit{i.e.}, as $\phi(\Delta)$ becomes more regularizing), the operator $\phi(\Delta)$ enforces stronger smoothing.

Consequently, the constraint \eqref{24.04.24.19.45} becomes less restrictive for larger $\delta_0$, thereby enlarging the admissible range of the nonlinear exponents—most notably $\lambda_{\mathrm{b.}}$.

Similarly, for the multiplicative noise constraint \eqref{24.04.24.19.46}, note that the admissible window for $r$ contains
$$
1\leq r \;<\; \frac{d}{\,d-\delta_0(1-\gamma)\,}.
$$
Since the upper bound on $r$ increases with $\delta_0$, choosing $r$ close to this threshold makes \eqref{24.04.24.19.46} less restrictive as $\delta_0$ grows, which in turn enlarges the admissible range of $\lambda_{\mathrm{s.m.}}$.

\item \emph{(Effect of $r$ on $\lambda_{\mathrm{s.m.}}$)} By the multiplicative-noise constraint \eqref{24.04.24.19.46}, choosing smoother noise (\textit{i.e.}, larger $r$) makes the constraint less restrictive, thereby yielding a broader admissible range for $\lambda_{\mathrm{s.m.}}$ (with all other parameters fixed).

\item The condition \eqref{24.04.06.19.26} guarantees the existence of parameters $\alpha,\beta$ such that \eqref{25.08.12.17.42} holds.
    \end{enumerate}
\end{rem}

We conclude this subsection with a comparison of Theorem \ref{main} to existing results. 
In particular, Theorem \ref{main} provides a more general framework than the following:
$$
\bullet\text{\cite[Theorem 3.10]{CH2021}},\qquad
    \bullet\text{\cite[Theorem 3.2]{Han2021}},\qquad
    \bullet\text{\cite[Theorem 3.5]{HY2024}}.
$$
For a concrete side-by-side comparison, we adopt the parameter choice
\begin{equation}
\label{25.08.12.19.19}
d=2,\qquad \phi(\lambda)=\lambda^{\delta_0},\qquad  p=q=16.
\end{equation}

\emph{1. Comparison with \cite[Theorem 3.10]{CH2021}.}

\textbf{Case 1 (fixed $\gamma$ and $\lambda_{\mathrm{s.m.}}$).}
We adopt the following parameter configuration:
\begin{equation}
\label{25.08.12.19.43}
\delta_0=1,\qquad \gamma=\frac{1}{4},\qquad \lambda_{\mathrm{s.m.}}=\frac{1}{8},\qquad 
\lambda_{\mathrm{s.d.}}=0,\qquad \lambda_{\mathrm{b.}}=0,\qquad r=\frac{4}{3}.
\end{equation}
\begin{itemize}
    \item \textbf{Assumption \ref{main_ass_corr} $(\delta_0,\gamma,r)$.}
    Here
    $$
        \alpha_{\delta_0,\gamma,r}=d-r\big(d-\delta_0(1-\gamma)\big)
        =2-\frac{4}{3}\Big(2-\frac{3}{4}\Big)=\frac{1}{3}<\frac{d}{2}=1,
    $$
    hence the correlation $\pi$ must satisfy
    \begin{equation}
    \label{24.05.12.16.35}
        \int_{|x|<1}|x|^{-4/3}\,\pi(\mathrm{d}x)<\infty.
    \end{equation}
    \item \textbf{Assumption in \cite[Theorem 3.10]{CH2021}.}
    To apply \cite[Theorem 3.10]{CH2021}, one requires
    \begin{equation}
    \label{25.08.12.19.33}
        \int_{|x|<1}|x|^{-5/3}\,\pi(\mathrm{d}x)<\infty.
    \end{equation}
\end{itemize}

Under \eqref{25.08.12.19.19}, \eqref{25.08.12.19.43}, and \eqref{24.05.12.16.35}, all hypotheses of Theorem~\ref{main} are satisfied. 
Likewise, \eqref{25.08.12.19.19}, \eqref{25.08.12.19.43}, and \eqref{25.08.12.19.33} satisfy the assumptions of \cite[Theorem 3.10]{CH2021}.

If, moreover, $\pi(\mathrm{d}x)=|x|^{-\alpha}\,\mathrm{d}x$, then near the origin
$$
\int_{|x|<1}|x|^{-\beta}\,\pi(\mathrm{d}x)<\infty
\quad\Longleftrightarrow\quad 
\beta+\alpha<d\ (=2).
$$
Consequently,
$$
\text{Theorem \ref{main}} \;\Rightarrow\; 0<\alpha<\frac{2}{3},
\qquad
\text{\cite[Theorem 3.10]{CH2021}} \;\Rightarrow\; 0<\alpha<\frac{1}{3}.
$$
Thus, Theorem \ref{main} applies to a strictly broader class of noises $\dot W$ than \cite[Theorem 3.10]{CH2021}.

\textbf{Case 2 (fixed $\pi$ and $\lambda_{\mathrm{s.m.}}$).}
We adopt the following parameter configuration:
\begin{equation}
\label{25.08.12.20.12}
\delta_0=1,\qquad \lambda_{\mathrm{s.m.}}=\frac{1}{8},\qquad 
\lambda_{\mathrm{s.d.}}=0,\qquad \lambda_{\mathrm{b.}}=0,\qquad r=1.
\end{equation}
Suppose $\pi(\mathrm{d}x)=|x|^{-1/4}\,\mathrm{d}x$ and $\gamma\in(0,1)$.
\begin{itemize}
    \item \textbf{Condition \eqref{24.04.06.19.26}.}
    Here
    $$
        \frac{2}{q}+\frac{d}{\delta_0 p}
        = \frac{2}{16}+\frac{2}{16}
        = \frac{1}{4} < \gamma .
    $$
    \item \textbf{Assumption in \cite[Theorem 3.10]{CH2021}.}
    One requires
    $$
        \int_{|x|<1} |x|^{-\frac{4}{3}(1+\gamma)}\,\pi(\mathrm{d}x)<\infty 
        \quad\Longleftrightarrow\quad 
        \frac{4}{3}(1+\gamma)+\frac{1}{4} < 2
        \ \Longleftrightarrow\ 
        \gamma < \frac{5}{16},
    $$
    and
    $$
        \frac{d+2}{\gamma} = \frac{4}{\gamma} < p = 16
        \quad\Longleftrightarrow\quad \gamma > \frac{1}{4}.
    $$
\end{itemize}
Therefore, the admissible ranges of $\gamma$ are
$$
\text{Theorem \ref{main}} \;\Rightarrow\; \frac{1}{4}<\gamma<1,
\qquad
\text{\cite[Theorem 3.10]{CH2021}} \;\Rightarrow\; \frac{1}{4}<\gamma<\frac{5}{16}.
$$
Thus, Theorem \ref{main} applies to a strictly broader range of regularity indices $\gamma$ than \cite[Theorem 3.10]{CH2021}.

\medskip
        
 \emph{2. Comparison with \cite[Theorem 3.5]{HY2024}.}

Let us assume \eqref{25.08.12.19.19} and
$$
\delta_0=1,\qquad \gamma=\frac{1}{4},\qquad  
\lambda_{\mathrm{s.d.}}=1,\qquad \lambda_{\mathrm{b.}}=0,\qquad r=\frac{4}{3}.
$$
If the correlation $\pi(\mathrm{d}x):=|x|^{-1/3}\mathrm{d}x$, then $\pi$ satisfies \eqref{24.05.12.16.35} and \cite[Assumption 2.3]{HY2024}, simultaneously.
Since $\lambda_{\mathrm{s.d.}}=1$, the range of $\lambda_{\mathrm{s.m.}}$ allowed by \cite[Theorem~3.5]{HY2024} is
$$
\lambda_{\mathrm{s.m.}} < \frac{1}{8},
$$
whereas, under \eqref{24.04.24.19.46}, it extends to
$$
\lambda_{\mathrm{s.m.}} < \frac{1}{8} + \frac{2}{p}.
$$

\medskip

\emph{3. Comparison with \cite[Theorem 3.2]{Han2021}.}

Assume
\begin{equation}
\label{25.08.13.11.50}
d=1,\qquad \phi(\lambda)=\lambda^{\delta_0},
\end{equation}
and take
$$
p=q=16,\qquad \delta_0=\frac{1}{2},\qquad \gamma=\frac{1}{2},\qquad  
\lambda_{\mathrm{s.m.}}=\frac{1}{32},\qquad \lambda_{\mathrm{b.}}=\lambda_{\mathrm{s.d.}}=0,\qquad r=1.
$$
Let $\pi(\mathrm{d}x)=\varepsilon_0(\mathrm{d}x)$ denote the centered Dirac measure.  
Then all assumptions of Theorem~\ref{main} are satisfied, so the theorem applies in this setting.  
By contrast, \cite[Theorem~3.2]{Han2021} excludes the case $\delta_0=\frac{1}{2}$.

Next, still under \eqref{25.08.13.11.50} with $\pi(\mathrm{d}x)=\varepsilon_0(\mathrm{d}x)$, consider
$$
p=q=64,\qquad \delta_0=\frac{2}{3},\qquad \gamma=\frac{1}{12},\qquad 
\lambda_{\mathrm{b.}}=\lambda_{\mathrm{s.d.}}=0,\qquad r=\frac{4}{3}.
$$
In this case, the admissible range for $\lambda_{\mathrm{s.m.}}$ under Theorem \ref{main} is
$$
0<\lambda_{\mathrm{s.m.}}<\frac{9}{64},
$$
whereas the corresponding range under \cite[Theorem 3.2]{Han2021} is
$$
0<\lambda_{\mathrm{s.m.}}<\frac{1}{12}.
$$
Since $\frac{9}{64}>\frac{1}{12}$, Theorem \ref{main} permits strictly larger superlinear growth in the multiplicative noise term than \cite[Theorem 3.2]{Han2021}.

\subsection{Strategy and Organization}

We begin by outlining the proof concepts for Theorem \ref{main}.
Our initial step involves constructing a local solution $ u_m $ as detailed in Theorem \ref{local}-$(i)$, applying the established solvability of semilinear stochastic integro-differential equations in mixed norm $ L_q(L_p) $-spaces (Theorem \ref{23.07.04.14.29}).
This solvability derives from the $ H^{\infty} $-calculus discussed in Section \ref{24.05.06.12.50}.
Following this, we establish the nonnegativity of the local solution $ u_m $ using a modified version of Krylov's maximum principle for stochastic integro-differential equations (Theorem \ref{local}-($ii$)).
The continuity of $ u_m $ is then addressed in Theorem \ref{local}-($iii$).
By leveraging the established nonnegativity and continuity, we demonstrate that the explosion does not occur for the local solution $ u_m $ (Theorem \ref{local}-($iv$)).
Using the non-explosivity of the local solution, we proceed to derive the main results as formulated in Theorem \ref{main}.

The organization of this paper is structured as follows:
\begin{itemize}
    \item In Section \ref{24.05.06.13.03}, we introduce the necessary preliminaries required for the proof of Theorem \ref{main}. This includes an exploration of function space properties, semigroups and generators associated with $ \phi(\Delta) $, functional calculus, and the solvability of semilinear stochastic integro-differential equations.
    \item In Section \ref{24.03.21.14.39}, we provide the detailed proof of Theorem \ref{main}.
\end{itemize}

\mysection{Preliminaries}
\label{24.05.06.13.03}
We emphasize that there are no assumptions on the Bernstein function $\phi$ in this section.

\subsection{Properties of function spaces}
We explore several fundamental properties of the function spaces delineated in Definitions \ref{def:sto-banach} and \ref{def_of_sol_1}.
\begin{thm}
\label{23.06.25.12.37}
Let $p \in [2, \infty)$, $q \in (2, \infty)$, and $\gamma \in \mathbb{R}$.
Then, the following statements hold:

\begin{enumerate}[(i)]
    \item For every bounded stopping time $\tau\leq T$, the space $\cH_{p,q}^{\phi,\gamma}(\tau)$ is a Banach space with the norm $\|\cdot\|_{\cH_{p,q}^{\phi,\gamma}(\tau)}$.
    \item For every bounded stopping time $\tau \leq T$,
\begin{equation} 
\label{stochastic_gronwall}
\|u\|_{\mathbb{H}^{\phi,\gamma}_{p,q}(t \wedge \tau)}^q \leq N(d,p,T)\int_0^{t\wedge\tau} \|u\|_{\mathcal{H}_{p,q}^{\phi,\gamma+1}(s)}^q\,\mathrm{d}s.
\end{equation}

\item Let $\boldsymbol{\mathrm{H}}_{c,0}^{\infty}$ denote the set of functions
$$
g(\omega,x) := \sum_{i=1}^{n} 1_{A_i}(\omega)g_i(x),
$$
where $A_i \in \mathbb{F}_0$ are pairwise disjoint, and each $g_i \in \mathcal{S}(\mathbb{R}^d)$. 
Then, $\boldsymbol{\mathrm{H}}_{c,0}^{\infty}$ is dense in $U_{p,q}^{\phi,\gamma}$.

\item Let $\boldsymbol{\mathrm{H}}_{c}^{\infty}$ denote the set of functions
$$
g(\omega,t,x) := \sum_{i=1}^{n} 1_{(\tau_{i-1},\tau_i]}(\omega,t)g_i(x),
$$
where $\tau_i$ are bounded stopping times such that $\tau_{i-1} \leq \tau_i$ and  $g_i \in \mathcal{S}(\mathbb{R}^d)$. Then, $\boldsymbol{\mathrm{H}}_{c}^{\infty}$ is dense in $\mathbb{H}_{p,q}^{\phi,\gamma}$.

\item Let $\boldsymbol{\mathrm{H}}_{c}^{\infty}(\ell_2)$ the set of functions $g = (g^1, g^2, \cdots)$,
$$
g^k(\omega,t,x) := \begin{cases}
\sum_{i=1}^{n} 1_{(\tau_{i-1}^n,\tau_i^n]}(\omega,t)g_i^{k}(x), & \text{if } k \leq n,\\
0, & \text{if } k > n,
\end{cases}
$$
where $\tau_i^n$ are bounded stopping times such that $\tau_{i-1}^n \leq \tau_i^n$ and  $g_i^{k} \in \mathcal{S}(\mathbb{R}^d)$.
Then $\boldsymbol{\mathrm{H}}_{c}^{\infty}(\ell_2)$ is dense in $\mathbb{H}_{p,q}^{\phi,\gamma}(\ell_2)$.
\end{enumerate}
\end{thm}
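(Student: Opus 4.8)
For (i) I would argue by completeness of the coordinate spaces: if $u_n$ is Cauchy in $\cH^{\phi,\gamma}_{p,q}(\tau)$, then $u_n$, $\bD u_n$, $\bS u_n$, $u_n(0)$ are Cauchy in the Banach spaces $\bH^{\phi,\gamma}_{p,q}(\tau)$, $\bH^{\phi,\gamma-2}_{p,q}(\tau)$, $\bH^{\phi,\gamma-1}_{p,q}(\tau,\ell_2)$, $U^{\phi,\gamma}_{p,q}$ (each a Bochner $L_q$-space over a separable Banach space), with limits $u,f,g,u_0$. Setting $\bar u(t):=u_0+\int_0^t f\,\mathrm ds+\sum_k\int_0^t g^k\,\mathrm dw^k_s$ --- a bona fide process, its drift part converging in $H^{\phi,\gamma-2}_p$ and, by the Burkholder--Davis--Gundy inequality, its martingale part converging in $L_q(\Omega;C([0,\tau];H^{\phi,\gamma-1}_p))$ --- I would pass to the limit in \eqref{def_of_sol_2} tested against a fixed $\varphi\in\cS(\bR^d)$: the right-hand side converges by the above, while $(u_n(t,\cdot),\varphi)\to(u(t,\cdot),\varphi)$ for a.e.\ $(\omega,t)$ along a subsequence. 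Hence $u=\bar u$ as distributions for a.e.\ $(\omega,t)$, so $u=\bar u$ in $\bH^{\phi,\gamma}_{p,q}(\tau)$, and since $\bar u$ has by construction the form in Definition \ref{def_of_sol_1}, $u\in\cH^{\phi,\gamma}_{p,q}(\tau)$ with $\bD u=f$, $\bS u=g$, $u(0)=u_0$ and $u_n\to u$ in the $\cH$-norm.

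For (ii), the plan is: reduce to $\gamma=0$ by applying $(1-\phi(\Delta))^{\gamma/2}$ (a Fourier multiplier, hence commuting with $\bD$, $\bS$ and with $e^{t\phi(\Delta)}$, and an isometry between the $\phi$-scales), so that it suffices to prove $\|u\|^q_{\bL_{p,q}(t\wedge\tau)}\le N\int_0^{t\wedge\tau}\|u\|^q_{\cH^{\phi,1}_{p,q}(s)}\,\mathrm ds$; then write $u$ in the Duhamel form for the semigroup generated by $\phi(\Delta)$,
\[
u(t)=e^{t\phi(\Delta)}u(0)+\int_0^t e^{(t-s)\phi(\Delta)}\big(\bD u-\phi(\Delta)u\big)(s)\,\mathrm ds+\int_0^t e^{(t-s)\phi(\Delta)}\bS u(s)\,\mathrm dW_s
\]
(valid because weak and mild solutions of the linear equation coincide), and bound the three $L_p$-norms: the first by the embedding $U^{\phi,1}_{p,q}\hookrightarrow L_p(\bR^d)$ (which uses $q>2$) and the $L_p$-boundedness of $e^{t\phi(\Delta)}$; the second by the smoothing estimate $\|e^{t\phi(\Delta)}h\|_{L_p}\le Nt^{-1/2}\|h\|_{H^{\phi,-1}_p}$ from the functional calculus of Section \ref{24.05.06.13.03} applied to $\bD u-\phi(\Delta)u\in H^{\phi,-1}_p$, followed by Young's convolution inequality in time (kernel $s^{-1/2}\in L_1(0,T)$); the third by the Burkholder--Davis--Gundy inequality in the $L_q(L_p)$-setting together with $L_p$-contractivity of $e^{t\phi(\Delta)}$. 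Raising to the power $q$, integrating over $[0,t\wedge\tau]$ and taking expectations then gives \eqref{stochastic_gronwall}; these are the $L_q(L_p)$-counterparts of the estimates in Krylov's $L_p$-theory \cite{kry1999analytic}. This is where I expect the main difficulty: $\bD u$ lies only in $H^{\phi,\gamma-2}_p$, one derivative below what a naive Minkowski estimate of $\int_0^t \bD u$ can place in $H^{\phi,\gamma-1}_p$, and it is precisely the parabolic smoothing of $e^{t\phi(\Delta)}$ that recovers the derivative --- with the bookkeeping of the mixed $L_q$-in-$(\omega,t)$ norm through the stochastic-integral term being the second delicate point.

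Finally, (iii)--(v) are density statements of the same type, handled by a monotone-class argument. For (iii): simple functions $\sum_i 1_{A_i}(\omega)g_i(x)$ with $A_i\in\bF_0$ pairwise disjoint and $g_i$ in the separable interpolation space $(H^{\phi,\gamma}_p,H^{\phi,\gamma-2}_p)_{1/q,q}$ are dense in $U^{\phi,\gamma}_{p,q}$ by definition of the Bochner space, and each $g_i$ is approximated in that norm by Schwartz functions (dense in $H^{\phi,\gamma}_p$, hence in the interpolation space). For (iv): after truncating $u\in\bH^{\phi,\gamma}_{p,q}$ in time and magnitude, one notes that $1_A(\omega)1_{(a,b]}(t)g(x)$ with $A\in\bF_a$ and $g\in\cS(\bR^d)$ belongs to the closed linear span of $\boldsymbol{\mathrm H}^{\infty}_c$ (write $1_A1_{(a,b]}=1_{(\sigma,b]}$ with $\sigma:=a\,1_A+b\,1_{A^c}$ a bounded stopping time), that such functions form a multiplicative class generating $\cP\otimes\cB(\bR^d)$, and applies the functional monotone class theorem combined with the density of $\cS(\bR^d)$ in $H^{\phi,\gamma}_p$. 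For (v): the identical argument componentwise, after first discarding the $\ell_2$-tail (which tends to $0$ in $H^{\phi,\gamma}_p(\bR^d;\ell_2)$) and refining to a common finite family of bounded stopping times for the retained components.
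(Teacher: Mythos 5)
Your proposal is correct, but it takes a noticeably different (and more self-contained) route than the paper. For (i)--(ii) the paper gives no argument at all: it declares both to be corollaries of the embedding Theorem \ref{prop} together with \cite[Theorem 6.4]{KKK2013}. Your completeness argument for (i) --- extracting limits of $u_n$, $\bD u_n$, $\bS u_n$, $u_n(0)$ in the four coordinate Banach spaces, reconstructing $\bar u$ and identifying it with $u$ by passing to the limit in \eqref{def_of_sol_2} --- is the standard proof behind that citation and is sound; the only glossed step is the convergence of the tested stochastic integrals, which needs BDG plus $L_q(0,T)\hookrightarrow L_2(0,T)$ (here $q>2$), all routine. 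For (ii) you essentially re-derive the $\beta=1/2$ case of Theorem \ref{prop} via Duhamel, the smoothing estimate $\|\mathrm{e}^{-t}T_t h\|_{L_p}\leq Nt^{-1/2}\|h\|_{H_p^{\phi,-1}}$ (the paper's Lemma \ref{23.09.03.15.14}-(i)) and BDG for the stochastic convolution; this duplicates work the paper centralizes in Section 3.3 but is a legitimate, arguably more transparent, derivation --- note only that to land on the right-hand side $\int_0^{t\wedge\tau}\|u\|^q_{\cH^{\phi,\gamma+1}_{p,q}(s)}\mathrm{d}s$ you must record that your pointwise-in-$s$ bound on $\bE\|u(s)\|^q_{L_p}$ involves the data only up to time $s$, which the Duhamel form does give. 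For (iii) the contrast is sharpest: the paper argues by contradiction via Hahn--Banach duality (a nonzero annihilating $h\in L_{q'}(\Omega;L_{p'})$ must vanish), whereas you approximate directly by $\bF_0$-simple functions and then by Schwartz functions in the interpolation space; both work, and yours is constructive. For (iv)--(v) the paper offers nothing beyond ``due to similarity,'' and your monotone-class argument with the key identity $1_A1_{(a,b]}=1_{(\sigma,b]}$, $\sigma:=a1_A+b1_{A^c}$ for $A\in\bF_a$, is exactly the standard way to fill that gap (with the minor bookkeeping of refining the resulting stopping times into a nested family). No genuine gaps.
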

\begin{proof}
    Assertions $(i)$ and $(ii)$ follow directly as corollaries of Theorem \ref{prop}, as detailed in \cite[Theorem 6.4]{KKK2013}.
    For assertions $(iii)$ through $(v)$, due to similarity, we demonstrate the proof explicitly for $(iii)$ only.
   Considering the isometry 
   $$
   (1-\phi(\Delta))^{\gamma/2}: U_{p,q}^{\phi,\gamma+2} \to U_{p,q}^{\phi,2} \subseteq L_q(\Omega, \mathbb{F}_0; L_p(\mathbb{R}^d)),
   $$
   and acknowledging that $\mathcal{S}(\mathbb{R}^d)$ is dense in the interpolation space $(H_p^{\phi,2}(\mathbb{R}^d), L_p(\mathbb{R}^d))_{1/q,q}$, it is sufficient to establish that $\boldsymbol{\mathrm{H}}_{c,0}^{\infty}(\mathbb{R}^d)$ is dense in $L_q(\Omega, \mathbb{F}_0, \mathbb{P}; L_p(\mathbb{R}^d))$.

    Assuming the contrary, by the Hahn-Banach theorem (\textit{e.g.} \cite[Theorem 5.19]{rudin2006real}), there exists a nonzero element $h \in L_{q'}(\Omega, \mathbb{F}_0, \mathbb{P}; L_{p'}(\mathbb{R}^d))$ for which
    $$
    \int_{\Omega}\int_{\bR^d}h(\omega,x)g(\omega,x)\mathrm{d}x\bP(\mathrm{d}\omega)=0,\quad \forall g\in \boldsymbol{\mathrm{H}}_{c,0}^{\infty}(\bR^d).
    $$
    Here $q':=q/(q-1)$ and $p':=p/(p-1)$.
    Specifically,
    $$
    \int_{\Omega}1_A(\omega)\int_{\bR^d}h(\omega,x)g(x)\mathrm{d}x\bP(\mathrm{d}\omega)=0,\quad \forall g\in \cS(\bR^d),\,A\in\mathbb{F}_0.
    $$
    Given that $\int_{\mathbb{R}^d} h(\omega, x) g(x) \mathrm{d}x$ yields an $\mathbb{F}_0$-measurable function, it implies that
    $$
    \int_{\bR^d}h(\omega,x)g(x)\mathrm{d}x=0,\quad\forall g\in \cS(\bR^d)
    $$
    almost surely.
    Consequently, this infers that $h(\omega, x) = 0$ for almost every $(\omega, x)$, contradicting the assumption that $h \neq 0$.
    The proof is completed.
\end{proof}

\subsection{Semigroups and generators of Subordinate Brownian Motions}
\label{23.09.18.16.30}

This subsection introduces the concept of Subordinate Brownian Motions (SBMs), and discusses some properties of their semigroups and generators.
We start with the definition of subordinators.
\begin{defn}
A real-valued Lévy process $S = (S_t)_{t \ge 0}$, defined on a probability space $(\Omega, \mathcal{F}, \mathbb{P})$, is termed a \textit{subordinator} if
$$
\bP(S_t\geq0,\,\forall t\geq0)=1.
$$
\end{defn}
It is widely recognized that there is a one-to-one correspondence between Bernstein functions and subordinators.
Thus, given either a subordinator $S=(S_t)_{t\geq0}$ or a Bernstein function $\phi$, there also exists the other object such that the following holds:
\begin{equation}
\label{sub_laplace}
\left( \int_{\Omega} \mathrm{e}^{-\lambda S_t(\omega)}  \mathbb{P}(\mathrm{d}\omega) =: \right) \mathbb{E}[\mathrm{e}^{-\lambda S_t}] =\mathrm{e}^{-t\phi(\lambda)},\quad \forall (t,\lambda)\in[0,\infty)\times\mathbb{R}_+.
\end{equation}
A more detailed explanation of the relationship between Bernstein functions and SBMs is found in \textit{e.g.} \cite{CLSW2023trace,SSV2012}.
We say that $S$ is a subordinator with Laplace exponent $\phi$ if \eqref{sub_laplace} is satisfied.

We now present the concept of SBMs.

\begin{defn}
The process $X=(\mathfrak{B}_{S_t})_{t\geq0}$ is termed a \emph{Subordinate Brownian Motion} (SBM) with the subordinator $S=(S_t)_{t\geq0}$ if $\mathfrak{B}=(\mathfrak{B}_t)_{t\geq0}$ is a $d$-dimensional Brownian motion that is independent of the subordinator $S$.
\end{defn}

\emph{Characteristic exponent of SBM.} It is well known (\textit{e.g.} \cite[Theorem 3.2]{SSV2012}) that Bernstein function $\phi$ has a unique representation
$$
\phi(\lambda)=b_{\phi}\lambda+\int_{0}^{\infty}\left(1-\mathrm{e}^{-\lambda t}\right)\,m_{\phi}(\mathrm{d}t),
$$
where $b_{\phi}\geq0$ and
$m_{\phi}$ is a nonnegative measure on $\bR_+$ satisfying
\begin{align}\label{2211041216}
\int_0^{\infty}\left(1\wedge t\right)m_{\phi}(\mathrm{d}t)<\infty\,.
\end{align}
We say that $b_{\phi}$ is the \emph{drift} and the nonnegative measure $m_{\phi}$ satisfying \eqref{2211041216} is the \emph{L\'evy measure} of $\phi$.
According to \cite[Theorem 30.1]{sato1999levy}, the SBM $X$ is a $\mathbb{R}^d$-valued L\'evy process, and its L\'evy triplet $(a,D,\nu)$ is as follows:
\begin{equation*}
\begin{gathered}
a=b_{\phi}\cdot 0+\int_{(0,\infty)}\int_{\mathbb{R}^d}xp(t,|x|)\mathrm{d}x\,m_{\phi}(\mathrm{d}t)=0,\\
D=b_{\phi}I_{d\times d},\\
\nu(B)=b_{\phi}\cdot0+\int_{(0,\infty)}\int_{B}p(t,|x|)\mathrm{d}x\,m_{\phi}(\mathrm{d}t)=\int_Bj(|x|)\mathrm{d}x,
\end{gathered}
\end{equation*}
where $I_{d\times d}$ is the $d\times d$ identity matrix,
\begin{equation}
    \label{eqn 03.30.10:15}
    p(t,r):=\frac{1}{(2\pi t)^{d/2}}\mathrm{e}^{-\frac{r^2}{2t}} ,\quad j(r):=\int_{(0,\infty)}p(t,r)m_{\phi}(\mathrm{d}t),
\end{equation}
$b_{\phi}$ and $m_{\phi}$ are the drift and the L\'evy measure of $\phi$, respectively.
By making use of \cite[Theorem 8.1]{sato1999levy}, we get
\begin{equation}\label{eqn 02.01.13:39}
\mathbb{E}[\mathrm{e}^{\mathrm{i}X_t\cdot\xi}] = \exp\left(-tb_{\phi}|\xi|^2+t\int_{\mathbb{R}^d}(\mathrm{e}^{\mathrm{i}\xi\cdot y}-1-\mathrm{i}y\cdot\xi1_{|y|\leq1})j(|y|)\mathrm{d}y\right).
\end{equation}
Applying Fubini's theorem, we can simplify the above as
\begin{equation}
\label{eqn 03.30.10:39}
\begin{aligned}
&\int_{\mathbb{R}^d}(\mathrm{e}^{\mathrm{i}\xi\cdot y}-1-\mathrm{i}y\cdot\xi1_{|y|\leq1})j(|y|)\mathrm{d}y\\
&=\int_{\mathbb{R}^d}(\cos(\xi\cdot y)-1)j(|y|)\mathrm{d}y\\
&=\int_{(0,\infty)}\int_{\mathbb{R}^d}(\mathrm{e}^{\mathrm{i}\xi\cdot y}-1-\mathrm{i}y\cdot\xi1_{|y|\leq1})p(s,|y|)\mathrm{d}y\,m_{\phi}(\mathrm{d}s)\\
&=\int_{(0,\infty)}(\mathrm{e}^{-s|\xi|^2}-1)m_{\phi}(\mathrm{d}s),
\end{aligned}
\end{equation}
which conclusively leads to:
\begin{equation}
\label{23.04.25.17.19}
\mathbb{E}[\mathrm{e}^{\mathrm{i}X_t\cdot\xi}]=\mathrm{e}^{-t\phi(|\xi|^2)}.
\end{equation}
We say that $X$ is a SBM with characteristic exponent $\phi$ if \eqref{23.04.25.17.19} is satisfied.

\emph{Semigroup and Infinitesimal generator of SBM.} Let $X$ be a SBM with characteristic exponent $\phi$,
$$
T_tf(x):=\mathbb{E}[f(x+X_t)],
$$
and $\cL_X$ be the infinitesimal generator of $T_t$:
$$
\cL_Xf(x):=\lim_{t \downarrow 0} \frac{ T_tf(x) -f(x)}{t}.
$$
According to \cite[Theorem 31.5]{sato1999levy}, if $X$ is a SBM with characteristic exponent $\phi$, then $\cL_X$ is equal to $\phi(\Delta)$ and
has the following integro-differential operator representation:
\begin{align*}
\cL_Xf(x)=\phi(\Delta)f(x) = b_{\phi}\Delta f + \int_{\mathbb{R}^d}\left(f(x+y)-f(x)-\nabla f(x)\cdot y1_{|y|\leq1}\right)J(y)\mathrm{d}y.
\end{align*}

We conclude this subsection by outlining some properties of $T_t$ and $\phi(\Delta)$, which will be pertinent for the functional calculus discussions in the subsection.
\begin{defn}
A family $L=\{L(t)\}_{t\geq0}$ of bounded linear operators acting on a Banach space $V$ is called a $C_0$-semigroup if the following three properties are satisfied:
\begin{enumerate}[(i)]
    \item $L(0)=I$,
    \item $L(t+s)=L(t)L(s)$ for all $t,s\geq0$,
    \item $\lim_{t\downarrow0}\|L(t)x-x\|_{V}=0$ for all $x\in V$.
\end{enumerate}
\end{defn}
\begin{lem}
\label{23.09.19.12.43}
Let $p\in(1,\infty)$.
Then, the following properties hold:
\begin{enumerate}[(i)]
    \item for $f\in C_c^{\infty}(\bR^d)$, $u_1(t,x):=T_tf(x)$ is a solution to
    $$
    u_1(t,x)=f(x)+\int_0^t\phi(\Delta)u_1(s,x)\mathrm{d}s.
    $$
    \item for $h\in C_c^{\infty}([0,\infty)\times\bR^d)$, $u_2(t,x):=\int_0^tT_{t-s}h(s,x)\mathrm{d}s$ is a solution to
    $$
    u_2(t,x)=\int_0^t(\phi(\Delta)u_2(s,x)+h(s,x))\mathrm{d}s.
    $$
    \item the semigroup $(T_t)_{t\geq0}$ is a contraction $C_0$-semigroup on $L_p(\bR^d)$.
    \item the operator $\phi(\Delta)$ acts as the generator of $(T_t)$ on $L_p(\mathbb{R}^d)$.
\end{enumerate}
\end{lem}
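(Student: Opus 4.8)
The plan is to establish the four assertions in the order (iii), then (i)--(ii), then (iv), using the $C_0$-semigroup structure together with a Dynkin-type formula as the main engine. For (iii): since $X$ is a L\'evy process, $T_tf(x)=\mathbb E[f(x+X_t)]=\int_{\mathbb R^d}f(x+y)\,\rho_t(\mathrm dy)$ is convolution against the (probability) law $\rho_t$ of $X_t$, so Jensen's inequality and Fubini's theorem give $\|T_tf\|_{L_p}\le\|f\|_{L_p}$ and each $T_t$ is a contraction on $L_p(\mathbb R^d)$; $T_0=I$ is immediate, and $T_{t+s}=T_sT_t$ follows by conditioning on $\mathcal F_s$ and using stationarity and independence of the increments of $X$. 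For strong continuity I would first take $f\in C_c^\infty(\mathbb R^d)$: Jensen and Fubini give $\|T_tf-f\|_{L_p}^p\le\mathbb E[\|f(\cdot+X_t)-f\|_{L_p}^p]$, and since $y\mapsto\|f(\cdot+y)-f\|_{L_p}^p$ is continuous at $0$ and bounded by $(2\|f\|_{L_p})^p$ while $X_t\to 0$ a.s. as $t\downarrow0$ (right-continuity of L\'evy paths), dominated convergence yields $\|T_tf-f\|_{L_p}\to0$; the general case then follows by density of $C_c^\infty(\mathbb R^d)$ in $L_p(\mathbb R^d)$ and the uniform bound $\sup_t\|T_t\|\le1$.

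Next I would prove (i) and (ii) simultaneously. Fix $f\in C_c^\infty(\mathbb R^d)$. The integro-differential representation of $\phi(\Delta)$ recorded above shows $\phi(\Delta)f$ is bounded and continuous, and, using the radial monotonicity and the $L^1(\{|x|>1\})$-integrability of the L\'evy density (which force $\phi(\Delta)f(x)=o(|x|^{-d})$ as $|x|\to\infty$), one checks $\phi(\Delta)f\in L_p(\mathbb R^d)$ for every $p\in(1,\infty)$. By Dynkin's formula (see \cite[Theorem 31.5]{sato1999levy}) the process $f(X_t)-f(X_0)-\int_0^t\phi(\Delta)f(X_s)\,\mathrm ds$ is a martingale; taking expectations after shifting the starting point to $x$ and interchanging expectation and integral (licit since $\phi(\Delta)f$ is bounded) gives the pointwise identity $T_tf(x)=f(x)+\int_0^t(T_s\phi(\Delta)f)(x)\,\mathrm ds$. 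Because both $T_s$ and $\phi(\Delta)$ are translation invariant, $T_s\phi(\Delta)f=\phi(\Delta)T_sf$ (the differential and integral operators may be moved past the convolution against $\rho_s$ since the $C_c^\infty$-bounds on $f$ pass to $f*\rho_s$), which is exactly assertion (i) with $u_1=T_tf$. Assertion (ii) is then the associated Duhamel identity: for $h\in C_c^\infty([0,\infty)\times\mathbb R^d)$, substitute the identity just proved into each $T_{t-s}h(s,\cdot)$ in $u_2(t,x)=\int_0^tT_{t-s}h(s,x)\,\mathrm ds$ and interchange the order of integration.

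For (iv), let $\mathcal A$ denote the generator of the $C_0$-semigroup $(T_t)$ on $L_p(\mathbb R^d)$. Dividing the identity of the previous paragraph by $t$ gives $t^{-1}(T_tf-f)=t^{-1}\int_0^tT_s\phi(\Delta)f\,\mathrm ds$, and since $\phi(\Delta)f\in L_p(\mathbb R^d)$ and $s\mapsto T_s\phi(\Delta)f$ is $L_p$-continuous by (iii), the right-hand side converges in $L_p$ to $\phi(\Delta)f$ as $t\downarrow0$. Hence $C_c^\infty(\mathbb R^d)\subseteq D(\mathcal A)$ and $\mathcal Af=\phi(\Delta)f$ there. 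To conclude $\mathcal A=\phi(\Delta)$ I would argue that $C_c^\infty(\mathbb R^d)$ is a core: it is dense in $L_p$, and the closure of $\phi(\Delta)$ on it can be identified with the Fourier multiplier operator with symbol $-\phi(|\cdot|^2)$ on $H_p^{\phi,2}(\mathbb R^d)$ (equivalently, one exhibits a $T_t$-invariant dense core, or checks directly that the resolvent equals $(\lambda-\phi(\Delta))^{-1}=\int_0^\infty e^{-\lambda t}T_t\,\mathrm dt$).

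I expect step (iv) to be the main obstacle: the rest is semigroup bookkeeping, but identifying the $L_p$-generator as $\phi(\Delta)$ itself rather than a proper extension requires the core property, which in turn needs either the Fourier-multiplier description of the resolvent or an explicit $T_t$-invariance argument, and the unboundedness of the symbol $\phi(|\xi|^2)$ demands some care. The secondary technical point is the $L_p$-membership of $\phi(\Delta)f$ for $f\in C_c^\infty$, which rests on the decay of the radially decreasing L\'evy density; the remaining ingredients (stochastic continuity of $X$, repeated use of Fubini, and the commutation of $\phi(\Delta)$ with $T_s$) are routine.
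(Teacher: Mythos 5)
Your proof is correct, but it runs on a different engine than the paper's. The paper treats everything as a Fourier multiplier: writing $\cF[T_tf]=\mathrm{e}^{-t\phi(|\cdot|^2)}\cF[f]$, assertion (i) becomes the one-line identity $\mathrm{e}^{-t\phi}-1=-\int_0^t\phi\,\mathrm{e}^{-s\phi}\,\mathrm{d}s$ under $\cF^{-1}$, (ii) is the same computation under the time integral, the contraction in (iii) is Minkowski's integral inequality, strong continuity follows from the bound $\|T_tf-f\|_{L_p}\leq t\|\phi(\Delta)f\|_{L_p}$ that drops out of (i), and (iv) is the estimate $\|t^{-1}(T_tf-f)-\phi(\Delta)f\|_{L_p}\leq t^{-1}\int_0^t\|\phi(\Delta)(T_s-I)f\|_{L_p}\,\mathrm{d}s\to0$ plus a density remark. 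You instead go through Dynkin's formula for the L\'evy process, which is equally valid but forces you to handle by hand three points the multiplier computation makes invisible: the $L_p$-membership of $\phi(\Delta)f$ (your decay argument $\phi(\Delta)f(x)=o(|x|^{-d})$ does work, but only because $p>1$ --- it would fail at $p=1$, whereas the Mihlin-type bound $|D^\alpha\phi(|\xi|^2)|\lesssim|\xi|^{-|\alpha|}\phi(|\xi|^2)$ used elsewhere in the paper gives it uniformly); the commutation $T_s\phi(\Delta)f=\phi(\Delta)T_sf$; and the justification of Dynkin's formula itself. On (iv) you are in fact more careful than the paper: the paper's closing ``density argument'' silently contains the same core issue you flag, and your observation that $C_c^\infty$ is not $T_t$-invariant (so one should identify the resolvent $\int_0^\infty\mathrm{e}^{-\lambda t}T_t\,\mathrm{d}t$ with the multiplier $(\lambda+\phi(|\cdot|^2))^{-1}$ rather than invoke invariance) is the right way to close it. The trade-off is that your route generalizes to L\'evy generators with no Fourier description, while the paper's is shorter and self-justifying here because $\phi(\Delta)$ is by definition a multiplier.
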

\begin{proof}
We start by invoking Minkowski's inequality to establish that
\begin{equation}
\label{24.03.17.12.31}
\|T_t\|_{L_p(\mathbb{R}^d) \to L_p(\mathbb{R}^d)} \leq 1.
\end{equation}

$(i)$  For any function $f \in C_c^{\infty}(\mathbb{R}^d)$,
\begin{equation}
\label{23.04.27.15.01}
\begin{aligned}
    u_1(t,x)-f(x)&=\cF^{-1}[(\mathrm{e}^{-t\phi(|\cdot|^2)}-1)\cF[f]](x)\\
    &=-\cF^{-1}\left[\left(\int_{0}^t\phi(|\cdot|^2)\mathrm{e}^{-s\phi(|\cdot|^2)}\mathrm{d}s\right)\cF[f]\right](x)\\
    &=\int_0^t\phi(\Delta)u_1(s,x)\mathrm{d}s.
\end{aligned}
\end{equation}

$(ii)$ By \eqref{23.04.27.15.01},
\begin{align*}
    u_2(t,x)-\int_0^th(s,x)\mathrm{d}s&=\int_0^t\cF^{-1}\left[(\mathrm{e}^{-(t-s)\phi(|\cdot|^2)}-1)\cF[h(s,\cdot)]\right](x)\mathrm{d}s\\
    &=-\int_0^t\cF^{-1}\left[\left(\int_s^t\phi(|\cdot|^2)\mathrm{e}^{-(l-s)\phi(|\cdot|^2)}\mathrm{d}l\right)\cF[h(s,\cdot)]\right](x)\mathrm{d}s\\
    &=\int_0^t\phi(\Delta)u_2(l,x)\mathrm{d}l.
\end{align*}

$(iii)$  Given that $X_0 = 0$ with almost certainty, it implies $T_0f(x) = f(x)$.
For $f \in C_c^{\infty}(\mathbb{R}^d)$, the Fourier transform reveals
$$
\cF[T_{t+s}f](\xi)=\bE[\mathrm{e}^{\mathrm{i}\xi\cdot X_{t+s}}]\cF[f](\xi)=\mathrm{e}^{-(t+s)\phi(|\xi|^2)}\cF[f](\xi)=\cF[T_tT_sf](\xi).
$$
Minkowski's inequality, \eqref{24.03.17.12.31} and \eqref{23.04.27.15.01} guarantee
$$
\|T_tf-f\|_{L_p(\bR^d)}\leq t\|\phi(\Delta)f\|_{L_p(\bR^d)},
$$
leading to
$$
\lim_{t\downarrow0}\|T_tf-f\|_{L_p(\bR^d)}=0.
$$
For $f\in L_p(\bR^d)$, by taking $\{f_n\}_{n\in\bN}\subseteq C_c^{\infty}(\bR^d)$ such that $f_n\to f$ in $L_p(\bR^d)$, we have
$$
\|T_{t+s}f-T_tT_sf\|_{L_p(\bR^d)}\leq \|T_{t+s}(f-f_n)\|_{L_p(\bR^d)}+\|T_tT_s(f-f_n)\|_{L_p(\bR^d)}
$$
and
$$
\|T_tf-f\|_{L_p(\bR^d)}\leq \|T_tf-T_tf_n\|_{L_p(\bR^d)}+\|T_tf_n-f_n\|_{L_p(\bR^d)}+\|f_n-f\|_{L_p(\bR^d)}.
$$
Due to \eqref{24.03.17.12.31}, $(iii)$ is proved.

$(iv)$ For $f \in C_c^{\infty}(\mathbb{R}^d)$, leveraging \eqref{23.04.27.15.01}, we find:
\begin{align*}
    \left\|\frac{T_tf-f}{t}-\phi(\Delta)f\right\|_{L_p(\bR^d)}\leq \frac{1}{t}\int_0^t\|\phi(\Delta)(T_s-I)f\|_{L_p(\bR^d)}\mathrm{d}s\to0
\end{align*}
as $t\downarrow0$.
This result, supported by a density argument, verifies $(iv)$, completing the lemma's proof.
\end{proof}

\subsection{Functional calculus}
\label{24.05.06.12.50}
First, we introduce the concept of $H^{\infty}$-calculus.
For $\eta\in(0,\pi)$, denote
$$
\Sigma_{\eta}:=\{z\in\bC\setminus\{0\}:|\arg(z)|<\eta\}.
$$
We denote $H^p(\Sigma_{\eta})$ the complex Banach space of all holomorphic functions $f:\Sigma_{\eta}\to\bC$ satisfying
$$
\|f\|_{H^p(\Sigma_{\eta})}:=\sup_{|\nu|<\eta}\|f(\mathrm{e}^{\mathrm{i}\nu}t)\|_{L_p(\bR_+,\mathrm{d}t/t)}<\infty.
$$
Let $A$ be a linear operator on a Banach space $V$.
\begin{enumerate}[(i)]
    \item (Resolvent of $A$) We say that $z$ is in the resolvent set $\rho(A)$ of $A$ if
\begin{itemize}
    \item the range of $A_z:=z-A$ is dense in $V$.
    \item $A_z$ has a continuous inverse.
\end{itemize}
Here, for $z\in\rho(A)$, we can define $R(z,A):=(z-A)^{-1}$.
    \item (Sectorial operator) We say that a linear operator $A$ is sectorial if
\begin{itemize}
    \item there exists $\eta\in(0,\pi)$ such that the spectrum $\sigma(A):=\bC\setminus\rho(A)$ is contained in $\overline{\Sigma_{\eta}}$.
    \item the operator $zR(z,A)$ is uniformly bounded on $\bC\setminus\overline{\Sigma_{\eta}}$, \textit{i.e.},
    $$
    \sup_{z\in\bC\setminus\overline{\Sigma_{\eta}}}\|zR(z,A)\|_{V\to V}<\infty.
    $$
\end{itemize}
In this case, we say $A$ is $\eta$-sectorial.
We say that the constant
$$
\eta(A):=\inf\{\eta\in(0,\pi):A\text{ is $\eta$-sectorial}\}
$$
is called the angle of sectoriality of $A$.
    \item (Bounded $H^{\infty}$-calculus) Let $A$ be a sectorial operator with the angle of sectoriality $\eta(A)$.
For functions $f\in H^1(\Sigma_{\nu})$, denote
$$
f(A):=\frac{1}{2\pi \mathrm{i}}\int_{\partial\Sigma_{\nu}}f(z)R(z,A)\mathrm{d}z,
$$
where $\eta(A)<\nu<\sigma$ is chosen arbitrarily.
We remark that the definition of $f(A)$ is independent of the choice of $\nu$ (see \textit{e.g.} \cite[Chapter 10.2]{hytonen2018analysis}).
For a constant $\sigma\in(\eta(A),\pi)$, we say that the operator $A$ has a bounded $H^{\infty}(\Sigma_{\sigma})$-calculus if there exists a constant $C>0$ such that
$$
\|f(A)\|_{V\to V}\leq C\|f\|_{L_{\infty}},\quad f\in H^1(\Sigma_{\sigma})\cap H^{\infty}(\Sigma_{\sigma}).
$$
We define
$$
\eta_{H^{\infty}}(A):=\inf\{\sigma\in(\eta(A),\pi):A\text{ has a bounded $H^{\infty}(\Sigma_{\sigma})$-calculus}\},
$$
and we say that $A$ has a bounded $H^{\infty}$-calculus of angle $\eta_{H^{\infty}}(A)$.
\end{enumerate}

\begin{defn}[Analytic semigroup]
    Let $(L(t))_{t\geq0}$ be a $C_0$-semigroup on a Banach space $V$.
    \begin{enumerate}[(i)]
        \item $(L(t))_{t\geq0}$ is called analytic on $\Sigma_{\eta}$ if for all $x\in V$, the function $t\mapsto L(t)x$ extends analytically to $\Sigma_{\eta}$ and satisfies
$$
\lim_{z\in\Sigma_{\eta},z\to0}L(z)x=x.
$$
We call $(L(t))_{t\geq0}$ an analytic $C_0$-semigroup if $(L(t))_{t\geq0}$ is analytic on $\Sigma_{\eta}$ for some $\eta\in(0,\pi)$.
    \item We say that $\{L(t)\}_{t\geq0}$ a bounded analytic $C_0$-semigroup  if $\{L(t)\}_{t\geq0}$ is analytic and uniformly bounded on $\Sigma_{\eta}$ for some $\eta\in(0,\pi)$.
    \end{enumerate}
\end{defn}

We are now prepared to employ functional calculus within the framework 
$$
(V, A, L(t)) = (L_p(\mathbb{R}^d), -\phi(\Delta), T_t).
$$
\begin{lem}
\label{23.09.19.12.41}
Let $\phi$ be a Bernstein function, $X$ be a SBM with characteristic exponent $\phi$, and
$$
T_tf(x):=\bE[f(x+X_t)].
$$
Then,
\begin{enumerate}[(i)]
    \item the operator $-\phi(\Delta)$ has a bounded $H^{\infty}$-calculus on $L_p(\bR^d)$ of angle $0$.
    \item the semigroup $(T_t)_{t\geq0}$ is a bounded analytic $C_0$-semigroup on $L_p(\bR^d)$.
\end{enumerate}
\end{lem}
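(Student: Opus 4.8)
The plan is to establish (i) by transporting the (classical) bounded $H^\infty$-calculus of the positive Laplacian $-\Delta$ through the Bernstein function $\phi$, and then to read off (ii) from (i) combined with the $C_0$-property and the identification of the generator already obtained in Lemma~\ref{23.09.19.12.43}(iii)--(iv).

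First I would collect the two inputs. On one side, $-\Delta$ on $L_p(\bR^d)$, $1<p<\infty$, is the Fourier multiplier operator with symbol $|\xi|^2$; it is injective with dense range, and for every $\sigma\in(0,\pi)$ and every $f\in H^1(\Sigma_\sigma)\cap H^\infty(\Sigma_\sigma)$ the symbol $\xi\mapsto f(|\xi|^2)$ obeys Mikhlin's condition with constant $N_\sigma\|f\|_{L_\infty}$ (the bounds $|\partial^\alpha_\xi f(|\xi|^2)|\le N_\alpha\|f\|_{L_\infty}|\xi|^{-|\alpha|}$ come from Cauchy's estimates for $f$ on a slightly smaller subsector), so $-\Delta$ has a bounded $H^\infty$-calculus of angle $0$ on $L_p(\bR^d)$; this is standard and may also be quoted, e.g.\ from \cite{hytonen2018analysis}. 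On the other side, since $\phi$ is a Bernstein function with $\phi(0+)=0$, it extends holomorphically to $\Sigma_{\pi/2}$ and satisfies $|\arg\phi(z)|\le|\arg z|$ there, so $\phi(\Sigma_\theta)\subseteq\overline{\Sigma_\theta}$ for every $\theta\in(0,\pi/2)$ (see \cite{SSV2012}). Moreover $-\phi(\Delta)$ is injective with dense range on $L_p(\bR^d)$, since its Fourier symbol $\phi(|\xi|^2)$ vanishes only at $\xi=0$.

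The core of (i) is a composition/subordination argument. I would first identify the operator $-\phi(\Delta)$ with $\phi(-\Delta)$, the operator obtained by applying the extended (Phillips) functional calculus of the sectorial operator $-\Delta$ to the Bernstein function $\phi$: by Bochner subordination the associated semigroup is $T_tf=\int_{(0,\infty)}P_sf\,\mu_t(\mathrm{d}s)$, where $(P_s)$ is the heat semigroup and $\mu_t$ is the law of $S_t$, so $\phi(-\Delta)$ is the Fourier multiplier with symbol $\phi(|\xi|^2)$; this agrees with $-\phi(\Delta)$ on the core $\cS(\bR^d)$, hence the two operators coincide. Now fix $\sigma\in(0,\pi)$ and choose $\sigma'\in(0,\min(\sigma,\pi/2))$. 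For $f\in H^1(\Sigma_\sigma)\cap H^\infty(\Sigma_\sigma)$, the inclusion $\phi(\Sigma_{\sigma'})\subseteq\overline{\Sigma_{\sigma'}}\subseteq\Sigma_\sigma$ shows that $f\circ\phi$ is holomorphic and bounded on $\Sigma_{\sigma'}$ with $\|f\circ\phi\|_{H^\infty(\Sigma_{\sigma'})}\le\|f\|_{H^\infty(\Sigma_\sigma)}$, and the composition rule for the sectorial functional calculus gives $f(-\phi(\Delta))=f(\phi(-\Delta))=(f\circ\phi)(-\Delta)$. Combining this with the bounded $H^\infty(\Sigma_{\sigma'})$-calculus of $-\Delta$ yields
$$
\|f(-\phi(\Delta))\|_{L_p\to L_p}=\|(f\circ\phi)(-\Delta)\|_{L_p\to L_p}\le N_{\sigma'}\|f\circ\phi\|_{H^\infty(\Sigma_{\sigma'})}\le N_{\sigma'}\|f\|_{L_\infty}.
$$
Since $\sigma\in(0,\pi)$ was arbitrary, $-\phi(\Delta)$ has a bounded $H^\infty$-calculus of angle $0$, which is (i). (Alternatively, one could invoke a ready-made subordination theorem: the negative generator of the Bochner subordinate of a bounded analytic semigroup with bounded $H^\infty$-calculus again has bounded $H^\infty$-calculus, of at most the same angle.) For (ii): a bounded $H^\infty$-calculus of angle $0$ implies in particular that $-\phi(\Delta)$ is sectorial of angle $0<\pi/2$; a densely defined sectorial operator of angle $<\pi/2$ is the negative generator of a bounded analytic $C_0$-semigroup, and by Lemma~\ref{23.09.19.12.43}(iii)--(iv) that semigroup is precisely $(T_t)_{t\ge0}$, so $(T_t)_{t\ge0}$ is a bounded analytic $C_0$-semigroup on $L_p(\bR^d)$.

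I expect the only genuinely delicate point to be the composition/subordination step: making precise, at the level of operator domains and cores, that $-\phi(\Delta)$ is exactly the operator produced by the extended functional calculus of $-\Delta$ applied to $\phi$, and that the composition rule $f(\phi(-\Delta))=(f\circ\phi)(-\Delta)$ is applicable in this setting — this rests on $\phi$ mapping sectors into their closures with the correct behaviour near $0$ and $\infty$, and on $-\Delta$ being injective with dense range. The Mikhlin estimates and the passage from (i) to (ii) are routine by comparison.
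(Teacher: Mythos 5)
Your proposal is correct, but it takes a genuinely different route from the paper. For (i) the paper simply cites \cite[Theorem 3.12]{KPR2022}, whereas you derive it from the bounded $H^\infty$-calculus of $-\Delta$ via the sector-mapping property $|\arg\phi(z)|\le|\arg z|$ of Bernstein functions and the composition rule $f(\phi(-\Delta))=(f\circ\phi)(-\Delta)$. For (ii) the paper does \emph{not} pass through (i): it verifies Mihlin's condition for the symbol $m_t(\xi)=t\phi(|\xi|^2)\mathrm{e}^{-t\phi(|\xi|^2)}$ using the Bernstein-function estimate $|D^n\phi(\lambda)|\le N(n)\lambda^{-n}\phi(\lambda)$, obtains $t\|\phi(\Delta)T_tf\|_{L_p}\le N\|f\|_{L_p}$ uniformly in $t$, and concludes analyticity from this bound together with Lemma \ref{23.09.19.12.43} and \cite[Theorem G.5.3]{hytonen2018analysis}; you instead deduce (ii) as a corollary of (i) plus the identification of the generator. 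Both arguments are sound. What the paper's route buys is a self-contained proof of (ii) resting only on the classical Mihlin theorem; what your route buys is economy (one abstract input yields both parts), at the price of the composition/subordination step, whose hypotheses (identification of $-\phi(\Delta)$ with the extended-calculus operator $\phi(-\Delta)$, injectivity and dense range, behaviour of $\phi$ at $0$ and $\infty$ — including the case of bounded Bernstein functions, where $f\circ\phi$ need not lie in $H^1$ of the sector) you correctly flag but do not verify. Note also that the scalar estimate $|D^n\phi(\lambda)|\le N\lambda^{-n}\phi(\lambda)$ the paper uses is precisely the Cauchy-estimate input of your argument, so one could equally run your Mikhlin computation directly on the symbol $f(\phi(|\xi|^2))$ and obtain (i) without invoking any abstract composition theorem — which is essentially what the cited reference does.
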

\begin{proof}
For $(i)$, we refer to \cite[Theorem 3.12]{KPR2022} for a comprehensive proof.
For part $(ii)$, consider a function $f \in C_c^{\infty}(\mathbb{R}^d)$ and examine the operator $t(-\phi(\Delta))T_tf(x)$ as follows:
$$
t(-\phi(\Delta))T_tf(x)=\cF^{-1}[t\phi(|\cdot|^2)\mathrm{e}^{-t\phi(|\cdot|^2)}\cF[f]](x)=:\cF^{-1}[m_t\cF[f]](x).
$$
Since
$$
|D^{n}\phi(\lambda)|\leq N(n)\lambda^{-n}\phi(\lambda),\quad \forall n\in\{0,1,2,\cdots\},
$$
for all multi-index $\alpha$, it holds that
$$
|D^{\alpha}_{\xi}m_t(\xi)|\leq N|\xi|^{-|\alpha|}.
$$
Applying Mihlin's multiplier theorem, coupled with a density argument, leads to
\begin{equation}
\label{23.08.23.12.33}
t\|\phi(\Delta)T_tf\|_{L_p(\mathbb{R}^d)} \leq N\|f\|_{L_p(\mathbb{R}^d)},
\end{equation}
where the constant $N$ does not depend on $t$ or $f$.
Leveraging Lemma \ref{23.09.19.12.43} and the theoretical framework provided in \cite[Theorem G.5.3]{hytonen2018analysis}, we establish the validity of $(ii)$, thereby concluding the proof of the lemma.
\end{proof}

We conclude this subsection by establishing key estimates that play a pivotal role in the next subsection.
\begin{thm}
\label{23.09.18.17.09}
Let $p,q\in(1,\infty)$.

$(i)$ Suppose that $u_0\in U_{p,q}^{\phi,2}$.
Then there exist $\cT_0u_0\in \cH_{p,q}^{\phi,2}$ and $\tilde{\cT_0}u_0\in \bL_{p,q}$ such that
    \begin{equation}
    \label{23.06.05.16.16}
        (\cT_0 u_0(t,\cdot),\varphi)=(u_0,\varphi)+\int_0^t(\tilde{\cT}_0u_0(s,\cdot),\varphi)\mathrm{d}s,\quad \forall \varphi\in C_c^{\infty}(\bR^d),
    \end{equation}
    and
    $$
    \|\cT_0u_0\|_{\bH_{p,q}^{\phi,2}}+\|\tilde{\cT}_0u_0\|_{\bL_{p,q}}\leq N_0\|u_0\|_{U_{p,q}^{\phi,2}},
    $$
    where $N_0$ is independent of $u_0$.

$(ii)$ Suppose that $f\in L_q([0,T];L_p(\bR^d))$.
Then
$$
\|\phi(\Delta)\cT_1f\|_{L_q([0,T];L_p(\bR^d))}\leq N_1\|f\|_{L_q([0,T];L_p(\bR^d))},
$$
where $N_1$ is independent of $f$, and
$$
\cT_1f(t,x):=\int_0^tT_{t-s}f(s,x)\mathrm{d}s.
$$

$(iii)$ Suppose that $g\in L_q(\bR_+;L_p(\bR^d;\ell_2))$.
Suppose also that $p=q=2$ or $p\in[2,\infty)$, $q\in(2,\infty)$.
Then 
$$
\|(-\phi(\Delta))^{1/2}\cT_2g\|_{L_q(\bR_+;L_p(\bR^d))}\leq N_2\|g\|_{L_q(\bR_+;L_p(\bR^d;\ell_2))},
$$
where $N_2$ is independent of $g$,
$$
\cT_2g(t,x):=\sum_{k=1}^{\infty}\int_0^tT_{t-s}g^k(s,x)\mathrm{d}w_s^k,
$$
and $\{w^k\}_{k=1}^{\infty}$ is a sequence of $1$-dimensional independent Brownian motions.
\end{thm}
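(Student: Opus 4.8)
The plan is to reduce all three assertions to abstract operator-theoretic facts about the pair $(X,A):=(L_p(\bR^d),-\phi(\Delta))$ already collected in Lemmas \ref{23.09.19.12.43} and \ref{23.09.19.12.41}: that $-\phi(\Delta)$ generates the contraction, bounded analytic $C_0$-semigroup $(T_t)_{t\geq0}$, and that it admits a bounded $H^\infty$-calculus of angle $0$ on $L_p(\bR^d)$. In each part one first works with a dense class of data (using Theorem \ref{23.06.25.12.37}$(iii)$--$(v)$), verifies the claimed integral identity via the Fourier-analytic computation \eqref{23.04.27.15.01} together with Lemma \ref{23.09.19.12.43}$(i)$--$(ii)$, proves the quantitative estimate, and then passes to the limit.

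\emph{Part $(i)$.} Set $\cT_0u_0(t,x):=T_tu_0(x)$ and $\tilde{\cT}_0u_0:=\phi(\Delta)T_\cdot u_0$. For $u_0\in\boldsymbol{\mathrm{H}}_{c,0}^{\infty}$ the identity \eqref{23.06.05.16.16} is Lemma \ref{23.09.19.12.43}$(i)$ tested against $\varphi$. Since $\|T_tu_0\|_{H_p^{\phi,2}}\leq\|u_0\|_{L_p}+\|\phi(\Delta)T_tu_0\|_{L_p}$ and $H_p^{\phi,2}(\bR^d)\hookrightarrow L_p(\bR^d)$ (so $(H_p^{\phi,2},L_p)_{1/q,q}\hookrightarrow L_p$), the whole bound reduces to controlling $\|\phi(\Delta)T_\cdot u_0\|_{L_q((0,T);L_p)}$. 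Because $-\phi(\Delta)$ generates a bounded analytic semigroup, the classical trace characterization of real interpolation spaces (e.g. Lunardi, or \cite{hytonen2018analysis}) gives $u_0\in(H_p^{\phi,2}(\bR^d),L_p(\bR^d))_{1/q,q}$ if and only if $t\mapsto\phi(\Delta)T_tu_0\in L_q(\bR_+;L_p)$, with norm equivalence modulo $\|u_0\|_{L_p}$; since all quantities are $\bF_0$-measurable, taking $L_q(\Omega)$-norms and restricting to $(0,T)$ yields the estimate with $N_0$ independent of $u_0$. Density of $\boldsymbol{\mathrm{H}}_{c,0}^{\infty}$ in $U_{p,q}^{\phi,2}$ (Theorem \ref{23.06.25.12.37}$(iii)$) extends the construction to arbitrary $u_0$, and the membership $\cT_0u_0\in\cH_{p,q}^{\phi,2}$ with $\mathbb{D}\cT_0u_0=\tilde{\cT}_0u_0$, $\mathbb{S}\cT_0u_0=0$, $(\cT_0u_0)(0)=u_0$ follows by construction.

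\emph{Part $(ii)$.} This is deterministic maximal $L_q$-regularity on $(0,T)$ for $-\phi(\Delta)$. Since $L_p(\bR^d)$ is a UMD space with Pisier's property $(\alpha)$, the bounded $H^\infty$-calculus of angle $0$ from Lemma \ref{23.09.19.12.41}$(i)$ implies $R$-sectoriality of $-\phi(\Delta)$ with angle $0<\pi/2$; Weis's theorem then gives $\|\phi(\Delta)\cT_1f\|_{L_q((0,T);L_p)}\leq N_1\|f\|_{L_q((0,T);L_p)}$, first for $f$ in a dense class (where Lemma \ref{23.09.19.12.43}$(ii)$ identifies $\cT_1f$ as the solution of $u_t=\phi(\Delta)u+f$, $u(0)=0$) and then for all $f\in L_q((0,T);L_p)$.

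\emph{Part $(iii)$.} When $p=q=2$ the bound is elementary: by It\^o's isometry, Fubini, Plancherel and the identity $\int_0^\infty\lambda\,\mathrm{e}^{-2\sigma\lambda}\,\mathrm{d}\sigma=\tfrac12$ applied through the self-adjoint functional calculus of $-\phi(\Delta)$ on $L_2$, one gets $\|(-\phi(\Delta))^{1/2}\cT_2g\|_{L_2(\bR_+;L_2)}^2\leq\tfrac12\|g\|_{L_2(\bR_+;L_2(\ell_2))}^2$. For $p\in[2,\infty)$, $q\in(2,\infty)$ we invoke the stochastic maximal $L_q$-regularity theorem of van Neerven--Veraar--Weis: since $-\phi(\Delta)$ has a bounded $H^\infty$-calculus of angle $0<\pi/2$ on the UMD space of type $2$, $L_p(\bR^d)$, the map $g\mapsto(-\phi(\Delta))^{1/2}\cT_2g$ is bounded from $L_q(\Omega\times\bR_+;\gamma(\ell_2,L_p))$ to $L_q(\Omega\times\bR_+;L_p)$, and for $p\geq2$ the $\gamma$-radonifying norm is equivalent to the $L_p(\bR^d;\ell_2)$-norm; working first with $g\in\boldsymbol{\mathrm{H}}_{c}^{\infty}(\ell_2)$ and extending by density (Theorem \ref{23.06.25.12.37}$(v)$) gives the stated inequality. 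The main obstacle is precisely Part $(iii)$ in the case $p\neq q$: one must carefully verify the hypotheses of the stochastic maximal regularity theorem (UMD, type $2$, bounded $H^\infty$-calculus of angle $<\pi/2$) and the $\gamma$-space identification, the remaining steps being bookkeeping once the analytic inputs of Lemma \ref{23.09.19.12.41} are in hand.
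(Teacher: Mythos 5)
Your overall strategy matches the paper's: work on the dense classes of Theorem \ref{23.06.25.12.37}, verify the integral identities via \eqref{23.04.27.15.01} and Lemma \ref{23.09.19.12.43}, and reduce the quantitative bounds to deterministic and stochastic maximal regularity. The routes differ in the machinery invoked. For $(i)$ the paper treats the extension operator $\cT_0$ as a black box imported from the trace theorem \cite[Theorem 1.5]{CLSW2023trace}, whereas you build it explicitly from the semigroup and the trace characterization of $(L_p,H_p^{\phi,2})_{1-1/q,q}$ for bounded analytic semigroups; these are morally the same result, and your version is more self-contained. For $(ii)$ the paper goes through bounded imaginary powers (\cite[Theorem 11.9]{D2004}) and Pr\"uss's maximal regularity theorem \cite[Theorem 8.7]{P2013}, while you go through property $(\alpha)$, the Kalton--Weis upgrade to $R$-sectoriality, and Weis's characterization; both are standard and both hinge on Lemma \ref{23.09.19.12.41}$(i)$. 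For $(iii)$ you and the paper both invoke \cite[Theorem 1.1]{NVW2012} with the $\gamma(\ell_2,L_p)\cong L_p(\ell_2)$ identification; your separate elementary treatment of $p=q=2$ is harmless but unnecessary, as the Hilbertian case is already covered there.

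There is one concrete defect in your part $(i)$: the choice $\cT_0u_0:=T_\cdot u_0$ does not in general belong to $\bH_{p,q}^{\phi,2}$, which by the paper's convention is taken over all of $\bR_+$ (and the cited extension theorem indeed produces $\cT_0u_{0}^{i}\in L_q(\bR_+;H_p^{\phi,2}(\bR^d))$). The trace characterization controls $\int_0^\infty\|\phi(\Delta)T_tu_0\|_{L_p}^q\,\mathrm{d}t$, but not $\int_0^\infty\|T_tu_0\|_{L_p}^q\,\mathrm{d}t$: already for $p=q=2$ and $\phi(\lambda)=\lambda$ one can take $\hat{u}_0$ mildly singular at the origin so that $u_0$ lies in the trace space while $\|T_tu_0\|_{L_2}\sim(\log t)^{-1/2}$ fails to be square integrable on $\bR_+$. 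The fix is standard and cheap — take $\cT_0u_0:=\mathrm{e}^{-t}T_tu_0$ and $\tilde{\cT}_0u_0:=(\phi(\Delta)-1)\mathrm{e}^{-t}T_tu_0$, for which the identity \eqref{23.06.05.16.16} still holds and the $L_p$-part of the norm gains the needed integrable factor — but as written your construction does not deliver the stated global-in-time membership.
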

\begin{proof}
    $(i)$ 
    Given $u_0 \in U_{p,q}^{\phi,2}$, Theorem \ref{23.06.25.12.37} guarantees the existence of a sequence ${u_{0,n}} \subseteq \boldsymbol{\mathrm{H}}_{c,0}^{\infty}$ converging to $u_0$ in $U_{p,q}^{\phi,2}$. Each $u_{0,n}$ can be expressed as
    $$
    u_{0,n}=\sum_{i=1}^{k(n)}1_{A_i}u_{0,n}^{i},
    $$
    with $u_{0,n}^i \in \mathcal{S}(\mathbb{R}^d)$.
    Applying the extension theorem (\textit{cf.} \cite[Theorem 1.5]{CLSW2023trace}), we find corresponding $\cT_0u_{0,n}^{i} \in L_q(\mathbb{R}_+; H_p^{\phi,2}(\mathbb{R}^d))$ and $\tilde{\cT}_0u_{0,n}^{i} \in L_q(\mathbb{R}_+; L_p(\mathbb{R}^d))$, satisfying
    $$
    (\cT_0u_{0,n}^{i},\varphi)=(u_{0,n}^i,\varphi)+\int_0^t(\tilde{\cT}_0u_{0,n}^{i},\varphi)\mathrm{d}s,\quad \forall \varphi\in C_c^{\infty}(\bR^d)
    $$
    and
    $$
    \|\cT_0u_{0,n}^{i}\|_{L_q(\bR_+;H_p^{\phi,2}(\bR^d))}+\|\tilde{\cT}_0u_{0,n}^{i}\|_{L_q(\bR_+;L_p(\bR^d))}\leq N\|u_{0,n}^i\|_{(H_p^{\phi,2}(\bR^d),L_p(\bR^d))_{1/q,q}}.
    $$
    Constructing $\cT_0u_{0,n}$ and $\tilde{\cT}_0u_{0,n}$ as
    $$
    \cT_0u_{0,n}:=\sum_{i=1}^{k(n)}1_{A_i}\cT_0u_{0,n}^{i},\quad \tilde{\cT}_0u_{0,n}:=\sum_{i=1}^{k(n)}1_{A_i}\tilde{\cT}_0u_{0,n}^{i},
    $$
    we find that they satisfy
    $$
        (\cT_0u_{0,n}(t,\cdot),\varphi)=(u_{0,n},\varphi)+\int_0^t(\tilde{\cT}_0u_{0,n}(s,\cdot),\varphi)\mathrm{d}s
    $$
    for all $\varphi \in C_c^{\infty}(\mathbb{R}^d)$ and 
    $$
    \|\cT_0u_{0,n}\|_{\bH_{p,q}^{\phi,2}}+\|\tilde{\cT}_0u_{0,n}\|_{\bL_{p,q}}\leq N\|u_{0,n}\|_{U_{p,q}^{\phi,2}},
    $$
    indicating $\cT_0u_{0,n} \in \cH_{p,q}^{\phi,2}$ and $\tilde{\cT}_0u_{0,n} \in \bL_{p,q}$. The convergence of $\cT_0u_{0,n} \to \cT_0u_{0}$ in $\cH_{p,q}^{\phi,2}$ and $\tilde{\cT}_0u_{0,n} \to \tilde{\cT}_0u_{0}$ in $\bL_{p,q}$, alongside the norm bound, establishes $(i)$.

    $(ii)$ By \cite[Theorem 11.9]{D2004}, $-\phi(\Delta)$ has bounded imaginary powers.
    For $f \in C_c^{\infty}([0,T] \times \mathbb{R}^d)$, Lemma \ref{23.09.19.12.43} provides that $u(t,x) := \int_0^t T_{t-s}f(s,x)  \mathrm{d}s$ fulfills 
    $$
    u(t,x)=\int_0^t(\phi(\Delta)u(s,x)+f(s,x))\mathrm{d}s,\quad t\in[0,T).
    $$
    This aligns with the conditions of \cite[Theorem 8.7]{P2013}, implying the boundedness of $\cT_2$ by applying \cite[Theorem 8.7]{P2013} and a density argument.

    $(iii)$ The preconditions set forth in Lemmas \ref{23.09.19.12.43} and \ref{23.09.19.12.41} fulfill the criteria required by \cite[Theorem 1.1]{NVW2012}.
    By invoking \cite[Theorem 1.1]{NVW2012}, we establish the validity of claim $(iii)$, thereby completing the proof of the theorem.
\end{proof}

\begin{rem}
    We remark that Theorem \ref{23.09.18.17.09}-($iii$) does not hold when $q=2$ and $p\in(2,\infty)$.
For more detail, see \cite[Section 6]{NVW2012}.
\end{rem}

\subsection{Solvability of semilinear stochastic integro-differential equations}
In this subsection, we address the solvability of a semilinear stochastic integro-differential equation in the space $\cH_{p,q}^{\phi,\gamma+2}(\tau)$, as defined by
\begin{equation}
\label{23.04.25.17.15}
    \mathrm{d}u=(\phi(\Delta)u+f(u))\,\mathrm{d}t+\sum_{k=1}^{\infty}g^k(u)\,\mathrm{d}w_t^k;\quad u(0)=u_0.
\end{equation}
\begin{thm}
\label{23.07.04.14.29}
Let $\tau\leq T$ be a bounded stopping time, $\gamma\in\bR$, and $p,q\in(2,\infty)$.
Suppose that $f(u)$ and $g(u)$ satisfy the followings:
\begin{enumerate}[(a)]
        \item for any $u\in H_p^{\phi,\gamma+2}(\bR^d)$, the functions $f(u)$ and $g(u)$ are  $H_p^{\phi,\gamma}(\bR^d)$ and $H_{p}^{\phi,\gamma+1}(\bR^d;\ell_2)$-valued predictable functions, respectively.
        \item for any $\varepsilon>0$, there exists a constant $N_{\varepsilon}>0$ such that
        \begin{equation}
\label{ass_lin}
\begin{aligned}
    &\|f(\omega,t,\cdot,u(\cdot))-f(\omega,t,\cdot,v(\cdot))\|_{H_p^{\phi,\gamma}(\bR^d)}\\
    &\quad+\|g(\omega,t,\cdot,u(\cdot))-g(\omega,t,\cdot,v(\cdot))\|_{H_p^{\phi,\gamma+1}(\bR^d;\ell_2)} \\
    &\leq \varepsilon\|u-v\|_{H_p^{\phi,\gamma+2}(\bR^d)}+N_{\varepsilon}\|u-v\|_{H_p^{\phi,\gamma}(\bR^d)}
    \end{aligned}
\end{equation}
for any $u,v\in H_p^{\phi,\gamma}(\bR^d)$ and $(\omega,t)\in\opar0,\tau\cbrk$.
    \item $(f(0),g(0))\in\bH_{p,q}^{\phi,\gamma}(\tau)\times\bH_{p,q}^{\gamma+1}(\tau,\ell_2)$.
    \end{enumerate}
Then, for any $u_0\in U_{p,q}^{\phi,\gamma+2}$, the equation \eqref{23.04.25.17.15} has a unique solution $u\in \cH_{p,q}^{\phi,\gamma+2}(\tau)$ with the estimate
\begin{equation}
\label{25.02.18.12.32}
\|u\|_{\cH_{p,q}^{\phi,\gamma+2}(\tau)}\leq N\left(\|u_0\|_{U_{p,q}^{\phi,\gamma+2}}+\|f(0)\|_{\bH_{p,q}^{\phi,\gamma}(\tau)}+\|g(0)\|_{\bH_{p,q}^{\phi,\gamma+1}(\tau,\ell_2)}\right),
\end{equation}
where $N=N(N_0,N_1,N_2,T)$.
Here $N_0,N_1,N_2$ are constants in Theorem \ref{23.09.18.17.09}.
\end{thm}
\begin{proof}
Since $(1-\phi(\Delta))^{\gamma/2}:U_{p,q}^{\phi,\gamma+2}\times\bH_{p,q}^{\phi,\gamma}(\tau)\times\bH_{p,q}^{\phi,\gamma+1}(\tau,\ell_2)\to U_{p,q}^{\phi,2}\times\bL_{p,q}(\tau)\times\bH_{p,q}^{\phi,1}(\tau,\ell_2)$ is an isometry, we assume that $\gamma=0$.

\textbf{Case 1.}  $f(u)=f$ and $g(u)=g$.

\textit{Uniqueness.}
     Suppose that $u\in \cH_{p,q}^{\phi,2}(\tau)$ satisfies
     \begin{align}
     \label{23.06.02.13.01}
         (u(t,\cdot),\varphi)=\int_0^t(\phi(\Delta)u(s,\cdot),\varphi)\mathrm{d}s,\quad \forall \varphi\in C_c^{\infty}(\bR^d).
     \end{align}
    If we put $\eta_{\varepsilon}(x-y):=\varepsilon^{-d}\eta(x/\varepsilon)$ instead of $\varphi(x)$ in \eqref{23.06.02.13.01} where $\varphi$ is nonnegative with unit integral, then $u_{\varepsilon}(t,x):=(u(t,\cdot),\eta_{\varepsilon}(x-.\cdot))$ satisfies
    $$
    u_{\varepsilon}(t,x)=\int_0^t\phi(\Delta)u_{\varepsilon}(s,x)\mathrm{d}s.
    $$
    Let $v(t,x):=\mathrm{e}^{-\lambda t}u_{\varepsilon}(t,x)$.
    Then $v$ satisfies the assumption in \cite[Lemma 6.5]{KKK2013}, thus, $v\equiv0$ and thus, $u_{\varepsilon}\equiv0$.
    Since $\varepsilon$ is an arbitrary positive constant, $u\equiv0$.

    \textit{Existence and estimates.}
    Due to Theorem \ref{23.06.25.12.37}, we consider only $(u_0,f,g)\in \boldsymbol{\mathrm{H}}_{c,0}^{\infty}\times \boldsymbol{\mathrm{H}}_{c}^{\infty}\times \boldsymbol{\mathrm{H}}_{c}^{\infty}(\ell_2)$.
    Let $u_1(t,x):=\cT_0u_0(t,x)$,
    \begin{equation*}
         u_2(t,x):=\cT_1(f-\tilde{\cT}_0u_0+\phi(\Delta)\cT_0u_0)(t,x),\quad u_3(t,x):=\cT_2g(t,x),
    \end{equation*}
where $\cT_0$, $\tilde{\cT}_0$, $\cT_1$ and $\cT_2$ are operators in Theorem \ref{23.09.18.17.09}.
Since $g\in \boldsymbol{\mathrm{H}}_{c}^{\infty}(\ell_2)$, by \eqref{23.04.27.15.01}, $u_3(t,x)=v(t,x)+\phi(\Delta)\cT_1v(t,x)$, where
    $$
    v(t,x):=\sum_{k=1}^{\infty}\int_0^tg^k(s,x)\mathrm{d}w_s^k.
    $$
    For $\varphi\in C_c^{\infty}(\bR^d)
    $,
    \begin{align*}
        (u_3(t,\cdot)-v(t,\cdot),\varphi)=(\phi(\Delta)\cT_1v(t,\cdot),\varphi)&=\int_0^t(\phi(\Delta)(\phi(\Delta)\cT_1v)
        (s,\cdot)+\phi(\Delta)v(s,\cdot),\varphi)\mathrm{d}s\\
        &=\int_0^t(\phi(\Delta)(u_3(s,\cdot)-v(s,\cdot))+\phi(\Delta)v(s,\cdot),\varphi)\mathrm{d}s\\
        &=\int_0^t(\phi(\Delta)u_3(s,\cdot),\varphi)\mathrm{d}s.
    \end{align*}
    Therefore, $u_3$ satisfies
    $$
    (u_3(t,\cdot),\varphi)=\int_0^t(\phi(\Delta)u_3(s,\cdot),\varphi)\mathrm{d}s+\sum_{k=1}^{\infty}\int_0^t(g^k(s,\cdot),\varphi)\mathrm{d}w_s^k,\quad \forall \varphi\in C_c^{\infty}(\bR^d).
    $$
    Since $u_1+u_2$ satisfies
    $$
    (u_1(t,\cdot)+u_2(t,\cdot),\varphi)=(u_0,\varphi)+\int_0^t(\phi(\Delta)(u_1(s,\cdot)+u_2(s,\cdot))+f(s,\cdot),\varphi)\mathrm{d}s,\quad \forall \varphi\in C_c^{\infty}(\bR^d),
    $$
    $u:=u_1+u_2+u_3$ satisfies
    $$
    (u(t,\cdot),\varphi)=(u_0,\varphi)+\int_0^t(\phi(\Delta)u(s,\cdot)+f(s,\cdot),\varphi)\mathrm{d}s+\sum_{k=1}^{\infty}\int_0^t(g^k(s,\cdot),\varphi)\mathrm{d}w_s^k,\quad \forall \varphi\in C_c^{\infty}(\bR^d).
    $$
    Now, we prove $\phi(\Delta)u\in \bL_{p,q}(\tau)$.
    One can check that
    \begin{align*}
        \phi(\Delta)u&=\phi(\Delta)u_1+\phi(\Delta)u_2+\phi(\Delta)u_3\\
        &=\phi(\Delta)\cT_0u_0+\phi(\Delta)\cT_1(f-\tilde{\cT}_0u_0+\phi(\Delta)\cT_0u_0)+\phi(\Delta)\cT_2g.
    \end{align*}
    By Theorem \ref{23.09.18.17.09}, we have
    \begin{align*}
        \|\phi(\Delta)u\|_{\bL_{p,q}(\tau)}&\leq N\bigg(\|\phi(\Delta)\cT_0u_0\|_{\bL_{p,q}(\tau)}+\|\phi(\Delta)\cT_1(f-\tilde{\cT}_0u_0+\phi(\Delta)\cT_0u_0)\|_{\bL_{p,q}(\tau)}\\
        &\qquad+\|\phi(\Delta)\cT_2g\|_{\bL_{p,q}(\tau,\ell_2)}\bigg)\\
        &\leq N\left(\|\cT_0u_0\|_{\bH_{p,q}^{\phi,2}(\tau)}+\|\tilde{\cT}_0u_0\|_{\bL_{p,q}(\tau)}+\|f\|_{\bL_{p,q}(\tau)}+\|(-\phi(\Delta))^{1/2}g\|_{\bL_{p,q}(\tau,\ell_2)}\right)\\
        &\leq N\left(\|u_0\|_{U_{p,q}^{\phi,2}}+\|f\|_{\bL_{p,q}(\tau)}+\|g\|_{\bH_{p,q}^{\phi,1}(\tau,\ell_2)}\right).
    \end{align*}
    This certainly implies that
    $$
    \|u\|_{\cH_{p,q}^{\phi,2}(\tau)}\leq N\left(\|u_0\|_{U_{p,q}^{\phi,2}}+\|f\|_{\bL_{p,q}(\tau)}+\|g\|_{\bH_{p,q}^{\phi,1}(\tau,\ell_2)}\right).
    $$

\textbf{Case 2.} General case.

By putting
$$
\tilde{f}:=f1_{\opar 0,\tau\cbrk},\quad\tilde{g}:=g1_{\opar0,\tau\cbrk},
$$
it suffices to prove the result for $\tau\equiv T$.
Indeed, $\tilde{f}$ and $\tilde{g}$ satisfy \eqref{ass_lin} for all $t\leq T$.
Therefore, if the result holds for $\tau\equiv T$, then there exists a unique solution $\tilde{u}\in\mathcal{H}_{p,q}^{\phi,\gamma+2}(T)$ to \eqref{23.04.25.17.15} and estimate \eqref{25.02.18.12.32} with $(\tilde{u},\tilde{f},\tilde{g})$, in place of $(u,f,g)$. 
It then follows that $\tilde{u}\in\mathcal{H}_{p,q}^{\phi,\gamma+2}(\tau)$ and satisfies \eqref{23.04.25.17.15} for $t\leq \tau$.

    Let $\tilde{u}\in\cH_{p,q}^{\phi,\gamma+2}(T)$.
    By Case 1, there exists a unique solution $v\in\cH_{p,q}^{\phi,\gamma+2}(T)$ such that
    \begin{equation*}
    \mathrm{d}v=(\phi(\Delta)v+f(\tilde{u}))\mathrm{d}t+\sum_{k=1}^{\infty}g^k(\tilde{u})\mathrm{d}w_t^k;\quad v(0)=u_0
    \end{equation*}
    and
    $$
    \|v\|_{\cH_{p,q}^{\phi,\gamma+2}(t)}\leq N\left(\|u_0\|_{U_{p,q}^{\phi,\gamma+2}}+\|f(\tilde{u})\|_{\bH_{p,q}^{\phi,\gamma}(t)}+\|g(\tilde{u})\|_{\bH_{p,q}^{\phi,\gamma+1}(t,\ell_2)}\right).
    $$
    If we denote $v:=\mathcal{R}\tilde{u}$, then $\mathcal{R}$ satisfies
    $$
    \|\mathcal{R}u-\mathcal{R}v\|_{\cH_{p,q}^{\phi,\gamma+2}(t)}\leq N\left(\|f(u)-f(v)\|_{\bH_{p,q}^{\phi,\gamma}(t)}+\|g(u)-g(v)\|_{\bH_{p,q}^{\phi,\gamma+1}(t,\ell_2)}\right).
    $$
    By the assumption on $f$ and $g$, and Theorem \ref{23.06.25.12.37}-$(ii)$,
    \begin{equation}
    \label{24.03.04.14.54}
        \|\mathcal{R}u-\mathcal{R}v\|_{\cH_{p,q}^{\phi,\gamma+2}(t)}^q\leq N\varepsilon^q\|u-v\|_{\cH_{p,q}^{\phi,\gamma+2}(t)}^q+N\int_0^t\|u-v\|_{\cH_{p,q}^{\phi,\gamma+2}(s)}^q\mathrm{d}s.
    \end{equation}
    Let $\theta:=N\varepsilon^q$. Using the mathematical induction, we have
    \begin{align*}
        &\|\mathcal{R}^mu-\mathcal{R}^mv\|_{\cH_{p,q}^{\phi,\gamma+2}(t)}^q\\
        &\leq \theta^m\|u-v\|_{\cH_{p,q}^{\phi,\gamma+2}(t)}^q+\sum_{k=1}^m\binom{m}{k}\theta^{m-k}N_{\theta}^k\int_0^t\frac{(t-s)^{k-1}}{(k-1)!}\|u-v\|_{\cH_{p,q}^{\phi,\gamma+2}(s)}^q\mathrm{d}s.
    \end{align*}
    By elementary calculation,
    \begin{align*}
    &\sum_{k=1}^{m}\binom{m}{k}\theta^{m-k}N_{\theta}^k\int_0^T\frac{(T-t)^{k-1}}{(k-1)!}\|u-v\|_{\mathcal{H}_{p,q}^{\phi,\gamma+2}(t)}^q\mathrm{d}t\\
    &\leq \|u-v\|_{\mathcal{H}_{p,q}^{\phi,\gamma+2}(T)}^q\sum_{k=1}^{m}\binom{m}{k}\theta^{m-k}N_{\theta}^k\int_0^T\frac{(T-t)^{k-1}}{(k-1)!}\mathrm{d}t\\
    &=\|u-v\|_{\mathcal{H}_{p,q}^{\phi,\gamma+2}(T)}^q\sum_{k=1}^{m}\frac{\binom{m}{k}\theta^{m-k}(TN_{\theta})^k}{k!}\\
    &\leq \|u-v\|_{\mathcal{H}_{p,q}^{\phi,\gamma+2}(T)}^q2^m\theta^m\sum_{k=1}^{\infty}\frac{(TN_{\theta}/\theta)^k}{k!}=\|u-v\|_{\mathcal{H}_{p,q}^{\phi,\gamma+2}(T)}^q2^m\theta^m\mathrm{e}^{TN_{\theta}/\theta}.
\end{align*}
    If we take $\theta=1/8$ and $m\in\mathbb{N}$ such that $\mathrm{e}^{TN_{\theta}/\theta}<2^m$, then we have
    \begin{equation}
    \label{24.03.04.13.41}
    \|\mathcal{R}^mu-\mathcal{R}^mv\|_{\cH_{p,q}^{\phi,\gamma+2}(T)}^q\leq \frac{1}{2^m}\|u-v\|_{\cH_{p,q}^{\phi,\gamma+2}(T)}^q.
    \end{equation}
    Therefore, $\mathcal{R}^m$ is contraction in $\cH_{p,q}^{\phi,\gamma+2}(T)$.
    This implies that there exists $u\in\cH_{p,q}^{\phi,\gamma+2}(T)$ such that $\mathcal{R}^mu=u$.
    Moreover,
    $$
    \|\mathcal{R}u-u\|_{\cH_{p,q}^{\phi,\gamma+2}(T)}^q=\|\mathcal{R}^{m+1}u-\mathcal{R}^mu\|_{\cH_{p,q}^{\phi,\gamma+2}(T)}^q\leq\frac{1}{2}\|\mathcal{R}u-u\|_{\cH_{p,q}^{\phi,\gamma+2}(T)}^q,
    $$
    thus, $\mathcal{R}u=u$.
    If there exist two fixed points $u,w\in\cH_{p,q}^{\phi,\gamma+2}(T)$ of $\mathcal{R}$, then due to \eqref{24.03.04.13.41},
    \begin{align*}
    \|u-w\|_{\cH_{p,q}^{\phi,\gamma+2}(T)}&=\|\mathcal{R}u-\mathcal{R}w\|_{\cH_{p,q}^{\phi,\gamma+2}(T)}\\
    &=\|\mathcal{R}^mu-\mathcal{R}^mw\|_{\cH_{p,q}^{\phi,\gamma+2}(T)}\leq \frac{1}{2}\|u-w\|_{\cH_{p,q}^{\phi,\gamma+2}(T)}.
    \end{align*}
    Therefore, the mapping $\mathcal{R}:\cH_{p,q}^{\phi,\gamma+2}(T)\to \cH_{p,q}^{\phi,\gamma+2}(T)$ has a unique fixed point $u$.
    For estimation, we use \eqref{24.03.04.14.54} and Case 1:
    \begin{align*}
        \|u\|_{\cH_{p,q}^{\phi,\gamma+2}(t)}&\leq\|\mathcal{R}u-\mathcal{R}0\|_{\cH_{p,q}^{\phi,\gamma+2}(t)}+\|\mathcal{R}0\|_{\cH_{p,q}^{\phi,\gamma+2}(t)} \\
        &\leq N\varepsilon^q\|u\|_{\cH_{p,q}^{\phi,\gamma+2}(t)}^q+N\int_0^t\|u\|_{\cH_{p,q}^{\phi,\gamma+2}(s)}^q\mathrm{d}s\\
        &\quad + N\left(\|u_0\|_{U_{p,q}^{\phi,\gamma+2}}+\|f(0)\|_{\bH_{p,q}^{\phi,\gamma}(T)}+\|g(0)\|_{\bH_{p,q}^{\phi,\gamma+1}(T,\ell_2)}\right).
    \end{align*}
    By taking sufficiently small $\varepsilon>0$ and the Gr\"onwall inequality, we have the desired estimates. The theorem is proved.
\end{proof}

\mysection{Proof of Theorem \ref{main}}
\label{24.03.21.14.39}

This section dedicates our efforts to the proof of Theorem \ref{main}.
Unless explicitly mentioned otherwise, we operate under the assumption that Assumptions \ref{main_ass_corr} ($\delta_0,\gamma,r$), \ref{24.03.14.11.58} ($\delta_0$), \ref{ass_nl_term}, and \ref{ass_coeff}($\delta_0$,$\gamma$) are satisfied throughout.
Our strategy for demonstrating Theorem \ref{main} involves initially establishing a local (in time) solution.
We then extend this to a global (in time) solution by leveraging the established existence, uniqueness, and regularity properties of local solutions to infer analogous properties for global solutions.
Here are the results for local solutions and the proofs will be given in Sections \ref{24.03.05.14.26}, \ref{24.03.24.11.02}, \ref{24.03.22.14.23}, and \ref{24.03.22.14.29}.
\begin{thm}
\label{local}
    Suppose that $p,q\in(2,\infty)$, $\gamma\in(0,1)$, and $u_0\in U_{p,q}^{\phi,\gamma}\cap L_1(\Omega\times\mathbb{R}^d)$ is nonnegative.
    \begin{enumerate}[(i)]
        \item (Existence, Uniqueness, and Regularity of local solutions) For any bounded stopping time $\tau$, the equation
    \begin{equation}
    \label{23.07.08.13.42}
    \mathrm{d}u_m=(\phi(\Delta)u_m+\zeta F_m(u_m)+\vec{b}\cdot\nabla_{x} (B_m(u_m)))\mathrm{d}t+\sum_{k=1}^{\infty}\xi\varphi_m(u_m)(\pi\ast e_k)\mathrm{d}w_t^k,
    \end{equation}
    on $\opar0,\tau\cbrk\times\bR^d$
    with initial data $u_0$ has a unique solution $u_m\in\cH_{p,q}^{\phi,\gamma}(\tau)$, where
    \begin{equation}
    \label{24.03.24.12.42}
    F_m(u):=F(u\vee0)h_m(u),\quad B_m(u):=B(u\vee0)h_m(u),\quad \varphi_m(u):=\varphi(u\vee0)h_m(u).
    \end{equation}
    Here, $h\in C^1(\bR)$ is a nonnegative function such that $h(z)=1$ on $|z|\leq 1$ and $h(z)=0$ on $|z|\geq2$, and
    $$
    h_m(z):=h(z/m),\quad \forall m\in\bN.
    $$
    
    \item (Nonnegativity of local solutions) The solution $u_m$ in $(i)$ is nonnegative.
    
    \item (Space-time H\"older type regularity of local solutions) If
    $$
    \frac{1}{q}\leq \alpha<\beta<\frac{\gamma}{2}-\frac{d}{2\delta_0p},
    $$
    then there exists a positive constant $N=N(d,N_0,N_1,N_2,p, q, T)$ such that
    $$
    \mathbb{E}\left[|u_m|_{C_{t,x}^{\alpha-\frac{1}{q},\phi_{\gamma-2\beta}-\frac{d}{p}}([0,\tau]\times\bR^d)}^q\right]\leq N\|u_m\|_{\cH_{p,q}^{\phi,\gamma}(\tau)}^q.
    $$
    If $\alpha=1/q$, then
    $$
    C_{t,x}^{\alpha-\frac{1}{q},\phi_{\gamma-2\beta}-\frac{d}{p}}([0,\tau]\times\bR^d):=C([0,\tau];C^{\phi_{\gamma-2\beta}-\frac{d}{p}}(\bR^d))
    $$
    
    \item (Non-explosivity of local solutions) Suppose also that $u_0\in L_1(\Omega\times\bR^d)$,
\begin{align*}
        0<&\lambda_{\mathrm{b.}}+\frac{d+1-\delta_0(2-\gamma)}{d}-\frac{1}{p}-\frac{\lambda_{\mathrm{s.d.}}}{p\vee q}<1, \\
        0<&\lambda_{\mathrm{s.m.}}+\frac{1}{2r}+\frac{1}{2}-\frac{1}{p}-\frac{\lambda_{\mathrm{s.d.}}}{p\vee q}<1,
\end{align*}
and
$$
\frac{2}{q}+\frac{d}{\delta_0 p}<\gamma.
$$
    Then for any $T$,
    $$
    \lim_{R\to\infty}\sup_{m\in\bN}\bP\left(\left\{\omega\in\Omega:\sup_{t\leq T}\sup_{x\in\bR^d}|u_m(\omega,t,x)|\geq R\right\}\right)=0.
    $$
    \end{enumerate}
\end{thm}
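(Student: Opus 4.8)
My plan is to prove the four assertions in turn, deriving $(i)$ from the semilinear solvability already in hand and building the later parts on it; the only genuinely new work lies in $(iv)$. For $(i)$: since $h\in C^1$ is supported in $\{|z|\le 2\}$ and $F,B,\varphi$ have $u$-derivatives bounded by $C_m$ on $\{|u|\le 2m\}$ (Assumption \ref{ass_nl_term}$(ii)$), the truncated coefficients in \eqref{24.03.24.12.42} are, for each fixed $m$, bounded together with their $u$-derivatives, hence globally Lipschitz in $u$. I would verify the hypotheses $(a)$--$(c)$ of Theorem \ref{23.07.04.14.29}, applied with its ``$\gamma$'' equal to $\gamma-2$, for $f(u)=\zeta F_m(u)+\vec b\cdot\nabla_x(B_m(u))$ and $g^k(u)=\xi\varphi_m(u)\,\pi\ast e_k$. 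The zeroth-order parts $\zeta F_m(u)$, $\xi\varphi_m(u)$ are handled by composition estimates in the $\phi$-Sobolev scale, the elementary embeddings $H_p^{\phi,\gamma}(\bR^d)\hookrightarrow H_p^{\delta_0\gamma}(\bR^d)$ for $\gamma\ge0$ and $H_p^{\delta_0 s}(\bR^d)\hookrightarrow H_p^{\phi,s}(\bR^d)$ for $s\le0$ (both consequences of Assumption \ref{24.03.14.11.58}), and the interpolation inequality $\|v\|_{L_p}\le\varepsilon\|v\|_{H_p^{\phi,\gamma}}+N_\varepsilon\|v\|_{H_p^{\phi,\gamma-2}}$, which casts the Lipschitz bound in the form required by \eqref{ass_lin}. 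The generalized Burgers' term is handled via the mapping property $\nabla_x\colon H_p^{\phi,s}\to H_p^{\phi,s-1/\delta_0}$ (again from Assumption \ref{24.03.14.11.58}) and the boundedness of multiplication by $\vec b\in C^2$; the requirement $\vec b\equiv0$ whenever $\delta_0(2-\gamma)\le1$ in Assumption \ref{ass_coeff} is exactly what keeps $\nabla_x(B_m(u))$ in $H_p^{\phi,\gamma-2}$. The $H_p^{\phi,\gamma-1}(\bR^d;\ell_2)$-estimate of $g^k(u)$ rests on the $\phi$-analogue of the colored-noise kernel bounds of \cite{CH2021}, controlling $\|(w\,\pi\ast e_k)_k\|_{H_p^{\phi,\gamma-1}(\bR^d;\ell_2)}$ by $\nu_{\delta_0,\gamma,r}^{1/2}$ times a Lebesgue norm of $w$, finite by Proposition \ref{24.04.21.21.05}; this is the point where Assumption \ref{main_ass_corr}$(\delta_0,\gamma,r)$ and the smoothing order $1-\gamma$ enter. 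Once $(a)$--$(c)$ hold, Theorem \ref{23.07.04.14.29} yields the unique $u_m\in\cH_{p,q}^{\phi,\gamma}(\tau)$.

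For $(ii)$, observe that $B(0)=\varphi(0)=0$ by Assumption \ref{ass_nl_term}$(iii)$, so $B_m,\varphi_m$ vanish on $\{u\le 0\}$ while on $\{u\le 0\}$ the reaction reduces to a bounded term; writing $\zeta F_m(u_m)=c(\omega,t,x)u_m+\zeta F_m(0)$ with $|c|\le K C_m$ turns \eqref{23.07.08.13.42} into a linear stochastic equation with bounded coefficients whose diffusion term vanishes at $u_m=0$. The plan is then to apply an adaptation of Krylov's maximum principle for equations driven by $\phi(\Delta)$ (cf.\ \cite{kry1999analytic}) to this equation, using the positive maximum principle available from the jump representation of $\phi(\Delta)$ in Section \ref{23.09.18.16.30} and the nonnegativity of $u_0$, to conclude $u_m\ge0$. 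For $(iii)$, I would prove a Sobolev-type embedding of $\cH_{p,q}^{\phi,\gamma}(\tau)$ into the space $C_{t,x}^{\alpha-1/q,\phi_{\gamma-2\beta}-d/p}([0,\tau]\times\bR^d)$ appearing in the statement: spatial regularity comes from a $\phi$-Morrey embedding $H_p^{\phi,\gamma-2\beta}(\bR^d)\hookrightarrow C^{\phi_{\gamma-2\beta}-d/p}(\bR^d)$, available since $\gamma-2\beta>d/(\delta_0 p)$ by the restriction on $\beta$; time regularity comes from the Kolmogorov continuity criterion applied to the $\mathrm dt$- and $\mathrm dw$-parts of $u_m$ together with the estimates of Theorem \ref{23.09.18.17.09}, which trade $2\beta$ units of spatial $\phi$-order for $\beta$ units of time order and cost an extra $1/q$ because of the $L_q(\mathrm dt)$-integrability. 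This gives the asserted bound $\bE[|u_m|^q_{C_{t,x}^{\alpha-1/q,\phi_{\gamma-2\beta}-d/p}}]\le N\|u_m\|^q_{\cH_{p,q}^{\phi,\gamma}(\tau)}$ with $N$ independent of $m$; condition \eqref{24.04.06.19.26} is precisely what makes the admissible window $1/q\le\alpha<\beta\le\gamma/2-d/(2\delta_0 p)$ nonempty.

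For $(iv)$ — the heart of the matter — fix $T$ and, for $R>0$, set $\tau^m_R:=\inf\{t\le T:\sup_{x\in\bR^d}u_m(t,x)>R\}$; by the continuity from $(iii)$ these are well-defined stopping times and $\{\sup_{[0,T]\times\bR^d}u_m\ge R\}\subseteq\{\tau^m_R<T\}$ up to a null set. The first step is a uniform $L_1$ a priori estimate: testing \eqref{23.07.08.13.42} against a compactly supported cutoff $\chi_n\uparrow 1$ and passing to the limit, and using that $\phi(\Delta)$ integrates to zero, that $\sum_i\partial_{x^i}b^i=0$ (Assumption \ref{ass_coeff}$(ii)$, which makes $\int_{\bR^d}\vec b\cdot\nabla_x(B_m(u_m))\,\mathrm dx=0$), that $F_m\le0$, $\zeta\ge\zeta_0>0$ and $u_m\ge0$, one obtains
\[
\sup_{t\le T}\bE\!\int_{\bR^d}\!u_m(t,x)\,\mathrm dx+c_{\mathrm{s.d.}}\zeta_0\,\bE\!\int_0^T\!\!\int_{\bR^d}\!u_m^{1+\lambda_{\mathrm{s.d.}}}h_m(u_m)\,\mathrm dx\,\mathrm dt\le N\,\bE\|u_0\|_{L_1(\bR^d)},
\]
with $N$ independent of $m$. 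The second step is a uniform-in-$m$ estimate of $\|u_m\|_{\cH_{p,q}^{\phi,\gamma}(\tau^m_R)}$: treating $f_m:=\zeta F_m(u_m)+\vec b\cdot\nabla_x(B_m(u_m))=\mathbb D u_m$ and $g_m:=(\xi\varphi_m(u_m)\,\pi\ast e_k)_{k\in\bN}=\mathbb S u_m$ as given data, the linear a priori estimate underlying Theorem \ref{23.07.04.14.29} gives $\|u_m\|_{\cH_{p,q}^{\phi,\gamma}(\tau^m_R)}\le N(\|u_0\|_{U_{p,q}^{\phi,\gamma}}+\|f_m\|_{\bH_{p,q}^{\phi,\gamma-2}(\tau^m_R)}+\|g_m\|_{\bH_{p,q}^{\phi,\gamma-1}(\tau^m_R,\ell_2)})$, and one bounds the forcing norms by fractional Leibniz (Moser) estimates in the $\phi$-scale: these reduce the Burgers' and noise contributions to terms of the type $\|u_m\|_{L_*}^{\lambda_{\mathrm{b.}}}\|u_m\|_{H_p^{\phi,\gamma}}$ and $\nu_{\delta_0,\gamma,r}^{1/2}\|u_m\|_{L_*}^{\lambda_{\mathrm{s.m.}}}\|u_m\|_{H_p^{\phi,\gamma}}$, which are then interpolated between the $L_1$- and spacetime-$L_{1+\lambda_{\mathrm{s.d.}}}$-bounds of the first step and the Sobolev embedding of $H_p^{\phi,\gamma}$ — the dissipation supplied by $F$ being used precisely through the latter bound. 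Conditions \eqref{24.04.24.19.45} and \eqref{24.04.24.19.46} are exactly what keep the resulting powers of $\|u_m\|_{\cH_{p,q}^{\phi,\gamma}}$ strictly below $1$, so those terms can be absorbed; the range $r<p/2$ is what lets the stochastic term be placed in $L_{p/2}$-type spaces after the Burkholder--Davis--Gundy inequality, and $r>1$ — the strongly reinforced Dalang's condition — is what enlarges the admissible range of $\lambda_{\mathrm{s.m.}}$ in \eqref{24.04.24.19.46}.

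The third step is the conclusion: a Grönwall argument applied to $t\mapsto\bE\|u_m\|^q_{\cH_{p,q}^{\phi,\gamma}(\tau^m_R\wedge t)}$, with all constants independent of $m$ and $R$, yields $\sup_m\bE\|u_m\|^q_{\cH_{p,q}^{\phi,\gamma}(\tau^m_R)}\le C$; by $(iii)$, $\bE[\sup_{[0,\tau^m_R]\times\bR^d}|u_m|^q]\le NC$, so by Chebyshev's inequality $\bP(\tau^m_R<T)\le\bP(\sup_{[0,\tau^m_R]\times\bR^d}u_m\ge R)\le NC/R^q$, uniformly in $m$, and letting $R\to\infty$ completes the proof. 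I expect the second step — establishing, uniformly in the truncation parameter $m$, that the damping from the strong dissipative term dominates the combined growth of the generalized Burgers' term and of the super-linear multiplicative noise, with the stochastic contribution controlled through $\nu_{\delta_0,\gamma,r}$ — to be the principal obstacle, and the point at which the strongly reinforced Dalang's condition is indispensable.
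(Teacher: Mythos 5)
Your plans for $(i)$--$(iii)$ match the paper's route in all essentials: $(i)$ reduces to Theorem \ref{23.07.04.14.29} via the Lipschitz bounds for the truncated nonlinearities and the convolution estimates with $R_{\delta_0(2-\gamma)}$ and $R_{\delta_0(1-\gamma)}$ (the paper only needs the $r=1$ case $\nu_{\delta_0,\gamma,1}$ there); $(ii)$ is a Krylov-type maximum principle for $\phi(\Delta)$, which the paper proves from scratch via the Dirichlet-form identity $\int u_+\phi(\Delta)u\,\mathrm{d}x\le0$ together with an $L_2$-approximation of $u_0$ and a finite-mode truncation of the noise that you should not omit; $(iii)$ is the Garsia--Rodemich--Rumsey/Kolmogorov embedding you describe. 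The genuine problems are in $(iv)$.

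First, you propose to treat $f_m=\zeta F_m(u_m)+\vec b\cdot\nabla_x(B_m(u_m))$ as given data in the linear a priori estimate and then bound ``the Burgers' and noise contributions.'' But $\|f_m\|_{\bH^{\phi,\gamma-2}_{p,q}}$ also contains $\zeta F_m(u_m)\sim -|u_m|^{1+\lambda_{\mathrm{s.d.}}}$, and $\lambda_{\mathrm{s.d.}}$ is unrestricted from above -- none of \eqref{24.04.24.19.45}--\eqref{24.04.24.19.46} limits it -- so this term cannot be absorbed and your scheme breaks at that point. The dissipative term helps in $L_1$ but hurts catastrophically when fed back into the $\cH^{\phi,\gamma}_{p,q}$-estimate as a forcing term. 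The paper's way around this is the comparison argument: introduce the auxiliary solution $v_m$ of \eqref{23.07.25.16.05} \emph{without} the $F_m$-term, show $z_m=u_m-v_m$ solves a deterministic equation with nonpositive forcing so that $0\le u_m\le v_m$, and then close the estimate on $v_m$ alone. This step is indispensable and is absent from your plan.

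Second, your interpolation in Step 2 between the $L_1$-bound, the dissipation integral, and the sup-norm must be performed pathwise before taking expectations; products of random quantities do not factor through separate moment bounds, and only moments of order $\alpha<1$ of $\sup_{t}\|u_m(t)\|_{L_1}$ and of the dissipation integral are available (the paper obtains these via the resolvent test functions $\psi_k$ and Krylov's supermartingale estimate, not the first-moment bound $\sup_t\bE\int u_m$ you state -- note that $\sup_t\bE$ is too weak to control the exit probability). The paper therefore localizes with the stopping time $\tau_m(S)$ at which the $L_1$-norm plus the dissipation integral first reaches $S$, so that these quantities are \emph{deterministically} bounded by $S$ on the stochastic interval, leaving only a power $\theta<1$ of $\bE[|u_m|^q_{C_{t,x}}]$ to be absorbed by Young's inequality; the final bound is the two-parameter Chebyshev estimate $N_S/R+N_\alpha/S^\alpha$. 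Your single sup-norm stopping time $\tau^m_R$ provides no pathwise $L_1$-control, so the interpolation from $L_1$ to $L_{p/r+p\lambda_{\mathrm{b.}}}$ (and the analogous step for the noise term, where the factor $\|u_m\|_{L_1}^{1-1/r}$ is split off by H\"older inside the $\mathfrak H$-norm) cannot be carried out, and your claim that the Gr\"onwall constant is independent of $R$ is unsupported.
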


The proof of Theorem \ref{local} will be provided in Sections \ref{24.03.05.14.26}, \ref{24.03.24.11.02}, \ref{24.03.22.14.23} and \ref{24.03.22.14.29}.
Assuming Theorem \ref{local} holds, we now proceed to prove Theorem \ref{main}.
\begin{proof}[Proof of Theorem \ref{main}]
$(i)$ The proof is structured into two main parts: uniqueness and existence.

\textbf{Step 1. Uniqueness.}
Let us consider two nonnegative solutions $u, v \in \cH_{p,q,loc}^{\phi,\gamma}$ to equation \eqref{23.06.28.17.21}.
We can identify a sequence of stopping times $\{\tau_m\}_{m=1}^{\infty}$, where $\tau_m \nearrow \infty$ and both $u, v \in \cH_{p,q}^{\phi,\gamma}(\tau_m)$, satisfying
$$
\sup_{\omega\in\Omega}\tau_m(\omega)<\infty,\quad \forall m\in\bN.
$$
Referencing Theorem \ref{local}-$(iii)$, we find that
    \begin{align}
    \label{23.09.25.12.06}
        \bE\left[|u|_{C([0,\tau_m]\times\bR^d)}+|v|_{C([0,\tau_m]\times\bR^d)}\right]<\infty.
    \end{align}
    For each $n \in \mathbb{N}$, define
    $$
    \tau_m^n(u):=\inf\{t\leq\tau_m:|u(t,\cdot)|_{C(\bR^d)}>n\},\quad \tau_m^n(v):=\inf\{t\leq\tau_m:|v(t,\cdot)|_{C(\bR^d)}>n\}
    $$
    and let $\tau_m^n := \tau_m^n(u) \wedge \tau_m^n(v)$.
    From \eqref{23.09.25.12.06}, $\tau_m^n(u)$ and $\tau_m^n(v)$ are stopping times, hence so is $\tau_m^n$.
    Considering the functions $F_n$, $B_n$, and $\varphi_n$ defined in \eqref{24.03.24.12.42}, we note that
    $$
    F_n(u)=F(u),\quad B_n(u)=B(u), \quad \varphi_n(u)=\varphi(u),\quad \forall (\omega,t)\in \cpar0,\tau_m^n\cbrk,
    $$
    making $u$ a solution to
    \begin{align}
    \label{23.09.25.13.41}
        \mathrm{d}u=(\phi(\Delta)u+\zeta F_n(u)+\vec{b}\cdot\nabla_{x} (B_n(u)))\mathrm{d}t+\xi\varphi_n(u)\dot{W},\quad 0<t\leq\tau_m^n
    \end{align}
    due to the definition of $\tau_m^n$.
    Similarly, $v$ satisfies equation \eqref{23.09.25.13.41} as well.
    According to Theorem \ref{local}, $u = v$ in $\cH_{p,q}^{\phi,\gamma}(\tau_m^n)$ for all $n \in \mathbb{N}$.
    As $\tau_m^n \nearrow \tau_m$ almost surely and by the application of the monotone convergence theorem, it follows that $u = v$ in $\cH_{p,q}^{\phi,\gamma}(\tau_m)$, thereby establishing uniqueness.

\textbf{Step 2. Existence.}
    For the proof of existence, let $u_m$ denote a solution to \eqref{23.07.08.13.42}, as described in Theorem \ref{local}. Given that $u_m \in \cH_{p,q}^{\phi,\gamma}(T)$ for any finite $T$, Theorem \ref{local}-$(iii)$ ensures
    $$
    \bE\left[\sup_{t\leq T}\sup_{x\in\bR^d}|u_m(t,x)|\right]<\infty.
    $$
    For $1 \leq R \leq m$, we define the stopping times
    $$
    \tau_m^R:=\inf\left\{t>0:\sup_{x\in\bR^d}|u_m(t,x)|\geq R\right\}.
    $$
    Our initial claim is that
    \begin{equation}
    \label{23.10.01.21.35}
        \tau_R^R=\tau_m^R\quad (a.s.).
    \end{equation}
    Given that
    $$
    \sup_{x\in\bR^d}|u_m(t,x)|\leq R\leq m,\quad \forall t\leq\tau_m^R,
    $$
    it follows that $h_m(u_m) = h_R(u_m) = 1$ for $t \leq \tau_m^R$, where $h$ is defined in Theorem \ref{local}.
    Similarly, given that
    $$
    \sup_{x\in\bR^d}|u_R(t,x)|\leq R\leq m,\quad \forall t\leq\tau_R^R,
    $$
    it holds that $h_R(u_R) = h_m(u_R) = 1$ for $t \leq \tau_R^R$.
    By the uniqueness established in Theorem \ref{local}, we have $u_m = u_R \in \cH_{p,q}^{\phi,1-\gamma}((\tau_R^R \vee \tau_m^R) \wedge T)$ for any $T < \infty$.
    Consequently, for almost surely $\omega \in \Omega$,
    $$
    \sup_{s\leq t}\sup_{x\in\bR^d}|u_R(\omega,s,x)|=\sup_{s\leq t}\sup_{x\in\bR^d}|u_m(\omega,t,x)|\leq R\quad \forall t\leq \tau_m^R,
    $$
    directly implying that $\tau_R^R \leq \tau_m^R$ (a.s.).
    A similar argument establishes that $\tau_m^R \leq \tau_R^R$ (a.s.), thereby proving \eqref{23.10.01.21.35}.
    
    Next, we demonstrate that $\tau_m^m \nearrow \infty$ a.s. as $m \to \infty$.
    Since $\tau_m^R \leq \tau_m^m$, the sequence $\{\tau_m^m\}_{m=1}^{\infty}$ is an a.s. increasing sequence of stopping times.
    By Theorem \ref{local},
    \begin{align*}
        \limsup_{m\to\infty}\bP(\tau_m^m\leq T)&=\limsup_{m\to\infty}\bP\left(\left\{\omega\in\Omega:\sup_{t\leq T}\sup_{x\in\bR^d}|u_m(\omega,t,x)|\geq m\right\}\right)\\
        &\leq \limsup_{S\to\infty}\sup_{m\in\bN}\bP\left(\left\{\omega\in\Omega:\sup_{t\leq T}\sup_{x\in\bR^d}|u_m(\omega,t,x)|\geq S\right\}\right)=0,
    \end{align*}
    implying $\tau_m^m \to \infty$ in probability. 
    As $\{\tau_m^m\}_{m=1}^{\infty}$ is almost surely increasing, we conclude $\tau_m^m \nearrow \infty$ (a.s.) as $m \to \infty$.

    Let us define the stopping time $\tau_m$ as the minimum of $\tau_m^m$ and $m$, formally presented as
    \begin{equation}
    \label{24.03.19.12.31}
        \tau_m:=\tau_m^m\wedge m.
    \end{equation}
    Consequently, we specify the solution $u(t,x)$ in the interval $[0, \tau_m)$ as
    $$
    u(t,x):=u_m(t,x),\quad t\in[0,\tau_m).
    $$
    Given that $\tau_m \leq m$ and $\tau_m \nearrow \infty$ almost surely, alongside the fact that $u_m \in \cH_{p,q}^{\phi,\gamma}(\tau_m)$, it follows that $u \in \cH_{p,q,loc}^{\phi,\gamma}$.
    This is supported by the observation that
    $$
    \sup_{x\in\bR^d}|u(t,x)|=\sup_{x\in\bR^d}|u_m(t,x)|\leq m,\quad \forall t\leq\tau_m,
    $$
    ensuring that $u$ satisfies equation \eqref{23.06.28.17.21}.

    $(ii)$ Consider $\tau_m$ as defined in \eqref{24.03.19.12.31} to be a stopping time.
    For any bounded stopping time $\tau$, we have $u = u_m \in \cH_{p,q}^{\phi,\gamma}(\tau_m \wedge \tau)$.
    Invoking Theorem \ref{local}-($iii$), we obtain
    $$
    \mathbb{E}\left[|u|_{C_{t,x}^{\alpha-\frac{1}{q},\phi_{\gamma-2\beta}-\frac{d}{p}}([0,\tau_m\wedge\tau]\times\bR^d)}^q\right]\leq N\|u\|_{\cH_{p,q}^{\phi,\gamma}(\tau_m\wedge\tau)}^q.
    $$
    Subsequently, we define a subset $\Omega_m \subseteq \Omega$ where $\mathbb{P}(\Omega_m) = 1$ and for every $\omega \in \Omega_m$,
    $$
    |u(\omega,\cdot)|_{C_{t,x}^{\alpha-\frac{1}{q},\phi_{\gamma-2\beta}-\frac{d}{p}}([0,\tau_m(\omega)\wedge\tau(\omega)]\times\bR^d)}<\infty.
    $$
    Given $\tau_m \nearrow \infty$ almost surely, there exists $\Omega' \subset \Omega$ with $\mathbb{P}(\Omega') = 1$, ensuring
    $$
    \lim_{m\to\infty}\tau_m(\omega)=\infty,\quad \forall \omega\in\Omega'.
    $$
    Set 
    $$\widetilde{\Omega} := \bigcap_{m=1}^{\infty} (\Omega_m \cap \Omega').
    $$
    It is straightforward to verify that $\mathbb{P}(\widetilde{\Omega}) = 1$.
    For $\omega \in \widetilde{\Omega}$, there exists $\bar{m} = \bar{m}(\omega)$ such that for all $m \geq \bar{m}(\omega)$, $\tau_m(\omega) \geq T \geq \tau(\omega)$.
    Consequently, for all $\omega \in \widetilde{\Omega}$,
    $$
    |u(\omega,\cdot)|_{C_{t,x}^{\alpha-\frac{1}{q},\phi_{\gamma-2\beta}-\frac{d}{p}}([0,\tau(\omega)]\times\bR^d)}<\infty.
    $$
    The theorem is proved.
\end{proof}

\subsection{Existence, Uniqueness, and Regularity of local solutions: Proof of Theorem \ref{local}-(i)}
\label{24.03.05.14.26}
Theorem \ref{local}-(i) is established through a direct application of the following corollary derived from Theorem \ref{23.07.04.14.29}.
\begin{corollary}
\label{23.07.04.17.12}
    Let $\tau\leq T$ be a bounded stopping time, $\delta_0\in(0,1]$, $\gamma\in(0,1)$ and $r\in[1,\infty)$.
    Suppose that Assumptions  \ref{main_ass_corr} $(\delta_0,\gamma,r)$, \ref{24.03.14.11.58} ($\delta_0$) and \ref{ass_coeff}($\delta_0,\gamma$) hold.
    Suppose also that
    \begin{enumerate}[(a)]
        \item $(F,B,\varphi)=(F(\omega,t,x,u),B(\omega,t,x,u),\varphi(\omega,t,x,u))$ are $\cP\times\cB(\bR^d)\times\cB(\bR)$-measurable functions.
        \item $F(\omega,t,x,0),\nabla_xB(\omega,t,x,0),\varphi(\omega,t,x,0)\in\bL_{p,q}(\tau)$,
        \item there exists a constant $K'>0$ such that for any $(\omega,t)\in \opar0,\tau \cbrk$, $x\in\bR^d$ and $u,v\in\bR$,
    \begin{align*}
        &|F(\omega,t,x,u)-F(\omega,t,x,v)|+|B(\omega,t,x,u)-B(\omega,t,x,v)|+|\varphi(\omega,t,x,u)-\varphi(\omega,t,x,v)|\\
        &\leq K'|u-v|.
    \end{align*}
    \end{enumerate}
    Then the equation
    \begin{equation*}
    \mathrm{d}u=(\phi(\Delta)u+\zeta F(u)+\vec{b}\cdot\nabla_{x}(B(u)))\,\mathrm{d}t+\sum_{k=1}^{\infty}\xi\varphi(u)(\pi\ast e_k)\mathrm{d}w_t^k, \quad (t,x)\in(0,\tau]\times\bR^d,
    \end{equation*}
    with initial data $u_0$ has a unique solution $u\in \cH_{p,q}^{\phi,\gamma}(\tau)$ with estimate
    $$
    \|u\|_{\cH_{p,q}^{\phi,\gamma}(\tau)}\leq N(\|u_0\|_{U_{p,q}^{\phi,\gamma}}+\||F(0)|+|\nabla_{x} B(0)|+|\varphi(0)|\|_{\bL_{p,q}(\tau)}),
    $$
where $N=N(K,K',N_0,N_1,N_2,\nu_{\delta_0,\gamma,1},p,q)$.
\end{corollary}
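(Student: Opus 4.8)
The plan is to recast \eqref{23.04.25.17.15}'s abstract setup and deduce Corollary \ref{23.07.04.17.12} directly from Theorem \ref{23.07.04.14.29} with the regularity index shifted down by $2$. Concretely, set
$$f(\omega,t,x,u):=\zeta F(u)+\vec{b}\cdot\nabla_{x}(B(u)),\qquad g^k(\omega,t,x,u):=\xi\varphi(u)(\pi\ast e_k)(x),$$
and apply Theorem \ref{23.07.04.14.29} with $\gamma$ there replaced by $\gamma-2\in(-2,-1)$. Then it suffices to verify hypotheses (a)--(c) of that theorem for this $(f,g)$ with the indices $(\gamma-2,\gamma-1,\gamma)$ in place of $(\gamma,\gamma+1,\gamma+2)$; the solvability statement, the a priori bound, and the indicated dependence of $N$ on $N_0,N_1,N_2$ are then immediate, the remaining dependence of $N$ on $K,K'$ and $\nu_{\delta_0,\gamma,1}$ coming from the estimates below (note the range of $(p,q)$ is inherited from Theorem \ref{23.07.04.14.29}).

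First I would record the elementary mapping properties giving (a) and (c). Since $\gamma>0$ we have $H_p^{\phi,\gamma}(\bR^d)\hookrightarrow L_p(\bR^d)\hookrightarrow H_p^{\phi,\gamma-2}(\bR^d)$, so for $u\in H_p^{\phi,\gamma}$ the globally Lipschitz (in $u$) functions $F(u),B(u),\varphi(u)$ lie in $L_p$ and, after multiplication by the bounded $\zeta,\xi$, in $H_p^{\phi,\gamma-2}$. For the advection term, Assumption \ref{ass_coeff}(iii) forces $\vec{b}\equiv0$ unless $\delta_0(2-\gamma)>1$, i.e.\ unless $\gamma-2+1/\delta_0<0$; in that case, using that $b^i$ is independent of $x^i$ to write $\vec{b}\cdot\nabla_x(B(u))=\nabla_x\cdot(\vec{b}\,B(u))$, Mihlin's multiplier theorem applied to the symbol $i\xi_i(1+\phi(|\xi|^2))^{(\gamma-2)/2}$ — whose size is $\lesssim(1+\phi(|\xi|^2))^{(1/\delta_0+\gamma-2)/2}\lesssim1$ by the symbol bounds on $\phi$ granted by Assumption \ref{24.03.14.11.58} (which yield $|\xi|\lesssim(1+\phi(|\xi|^2))^{1/(2\delta_0)}$) — gives $\|\vec{b}\cdot\nabla_x(B(u))\|_{H_p^{\phi,\gamma-2}}\lesssim K\|B(u)\|_{L_p}$. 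For the noise term one invokes the colored-noise multiplier estimate (cf.\ \cite{CH2021}): under Assumption \ref{main_ass_corr}$(\delta_0,\gamma,r)$, which by Proposition \ref{24.04.21.21.16} already gives the reinforced condition $\nu_{\delta_0,\gamma,1}<\infty$, multiplication by $(\pi\ast e_k)_k$ maps $L_p(\bR^d)$ boundedly into $H_p^{\phi,\gamma-1}(\bR^d;\ell_2)$ with norm $\lesssim\nu_{\delta_0,\gamma,1}^{1/2}$. Predictability of $f(u),g(u)$ follows from the measurability assumptions on $F,B,\varphi,\vec b,\zeta,\xi$ and predictability of $u$; this establishes (a). Condition (c), namely $(f(0),g(0))\in\bH_{p,q}^{\phi,\gamma-2}(\tau)\times\bH_{p,q}^{\phi,\gamma-1}(\tau,\ell_2)$, is the same computation with the ``zero'' inputs, together with hypothesis (b) of the Corollary, which places $F(0),\nabla_xB(0),\varphi(0)$ in $\bL_{p,q}(\tau)$.

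The substance is the perturbation bound (b): for each $\varepsilon>0$ and all $u,v\in H_p^{\phi,\gamma}$, $\|f(u)-f(v)\|_{H_p^{\phi,\gamma-2}}+\|g(u)-g(v)\|_{H_p^{\phi,\gamma-1}(\ell_2)}\le\varepsilon\|u-v\|_{H_p^{\phi,\gamma}}+N_\varepsilon\|u-v\|_{H_p^{\phi,\gamma-2}}$. I would split into three pieces: by the embedding $L_p\hookrightarrow H_p^{\phi,\gamma-2}$ and the Lipschitz bound (c) of the Corollary, $\|\zeta(F(u)-F(v))\|_{H_p^{\phi,\gamma-2}}\le NKK'\|u-v\|_{L_p}$; by the Mihlin estimate above and the Lipschitz bound, $\|\vec b\cdot\nabla_x(B(u)-B(v))\|_{H_p^{\phi,\gamma-2}}\le NKK'\|u-v\|_{L_p}$ (and $=0$ if $\vec b\equiv0$); by the colored-noise estimate and the Lipschitz bound, $\|\xi(\varphi(u)-\varphi(v))(\pi\ast e_k)_k\|_{H_p^{\phi,\gamma-1}(\ell_2)}\le N\nu_{\delta_0,\gamma,1}^{1/2}KK'\|u-v\|_{L_p}$. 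Since $\gamma-2<0<\gamma$, the interpolation inequality for the scale $\{H_p^{\phi,s}\}$ gives $\|u-v\|_{L_p}\le N\|u-v\|_{H_p^{\phi,\gamma}}^{1-\gamma/2}\|u-v\|_{H_p^{\phi,\gamma-2}}^{\gamma/2}$, and Young's inequality converts each right-hand side into $\varepsilon\|u-v\|_{H_p^{\phi,\gamma}}+N_\varepsilon\|u-v\|_{H_p^{\phi,\gamma-2}}$, which is exactly (b). With (a)--(c) verified, Theorem \ref{23.07.04.14.29} yields the unique $u\in\cH_{p,q}^{\phi,\gamma}(\tau)$ together with the stated estimate. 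I expect the main obstacle to be the colored-noise multiplier estimate for $g$ — the bookkeeping showing that multiplication by $(\pi\ast e_k)_k$ costs at most the $\phi$-scale regularity permitted by the reinforced Dalang condition — which is the technical heart and rests on the analysis of $R_{\delta_0(1-\gamma)}$ and Assumption \ref{main_ass_corr} developed earlier; the advection estimate is secondary but genuinely requires both Assumption \ref{24.03.14.11.58} and the $x$-independence structure of $\vec b$.
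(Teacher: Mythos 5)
Your proposal is correct and follows essentially the same route as the paper: reduce to Theorem \ref{23.07.04.14.29} with the regularity index shifted to $\gamma-2$, and verify its Lipschitz hypothesis by bounding the three nonlinear differences in $H_p^{\phi,\gamma-2}$, $H_p^{\phi,\gamma-2}$ and $H_p^{\phi,\gamma-1}(\ell_2)$ by $N\|u-v\|_{L_p}$, with the colored-noise multiplication estimate \eqref{24.03.04.14.29} (from \cite{CH2021} and \cite{FS2006}) carrying the main weight for the stochastic term. The only cosmetic deviations are that the paper treats the reaction and advection terms via the embedding $H_p^{\delta_0 s}(\bR^d)\hookrightarrow H_p^{\phi,s}(\bR^d)$ for $s\le 0$ together with Young's convolution inequality for the kernels $R_{\delta_0(2-\gamma)},R_{\delta_0(2-\gamma)-1}\in L_1(\bR^d)$ rather than your direct Mihlin multiplier argument on the $\phi$-scale, and it leaves implicit the interpolation step you spell out for converting the $L_p$ bound into the $\varepsilon$-form of hypothesis (b).
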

\begin{proof}
To prove the assertion, we apply Theorem \ref{23.07.04.14.29}.
To this end, it suffices to establish the following estimate, which corresponds to the assumption of Theorem \ref{23.07.04.14.29}:
    \begin{align*}
    &\|\zeta(F(u)-F(v))+\vec{b}\cdot\nabla_x(B(u)-B(v))\|_{H_p^{\phi,\gamma-2}(\bR^d)}+\|\xi\boldsymbol{\pi}(\varphi(u)-\varphi(v))\|_{H_p^{\phi,\gamma-1}(\bR^d;\ell_2)}\\
    &\leq N\|u-v\|_{L_p(\bR^d)},
    \end{align*}
    where $\boldsymbol{\pi} = (\pi * e_1, \pi * e_2, \ldots)$.
    
    Assuming the condition \eqref{23.06.28.17.44} holds and for $\gamma \leq 0$, we have
\begin{equation}
    \label{23.06.29.12.40}
    \|u\|_{H_{p}^{\phi,\gamma}(\bR^d)}\leq N\|u\|_{H_{p}^{\delta_0\gamma}(\bR^d)}.
\end{equation}
Hence,
\begin{align*}
    &\|\zeta(F(u)-F(v))+\vec{b}\cdot\nabla_x(B(u)-B(v))\|_{H_p^{\phi,\gamma-2}(\bR^d)}+\|\xi\boldsymbol{\pi}(\varphi(u)-\varphi(v))\|_{H_p^{\phi,\gamma-1}(\bR^d;\ell_2)}\\
    &\leq \|\zeta(F(u)-F(v))+\vec{b}\cdot\nabla_x(B(u)-B(v))\|_{H_p^{\delta_0(\gamma-2)}(\bR^d)}+\|\xi\boldsymbol{\pi}(\varphi(u)-\varphi(v))\|_{H_p^{\delta_0(\gamma-1)}(\bR^d;\ell_2)}.
\end{align*}
Since $\|R_{\beta}\|_{L_1(\mathbb{R}^d)}=1$ for all $\beta>0$ (see \eqref{24.03.06.12.01}),
by Young’s convolution inequality and  $\gamma \in (0,1)$, we obtain
\begin{equation}
\label{24.03.24.12.44}
    \begin{aligned}
        &\|\zeta(F(u)-F(v))\|_{H_p^{\delta_0(\gamma-2)}(\bR^d)}^p\\
        &=\int_{\bR^d}|(R_{\delta_0(2-\gamma)}\ast(\zeta(F(u)-F(v))))(x)|^p\mathrm{d}x\\
        &\leq K' \int_{\bR^d}\left(\int_{\bR^d}R_{\delta_0(2-\gamma)}(x-y)|u(y)-v(y)||\zeta(t,y)|\mathrm{d}y\right)^p\mathrm{d}x\\
        &\leq N(K,K',p)\|R_{\delta_0(2-\gamma)}\|_{L_1(\bR^d)}^p\|u-v\|_{L_p(\bR^d)}^p\\
        &=N(K,K',p)\|u-v\|_{L_p(\bR^d)}^p,
    \end{aligned}
\end{equation}
where $R_{\delta_0(2-\gamma)}$ is defined in \eqref{24.03.04.15.14}. 
Similarly, we also have
\begin{equation}
\label{24.03.24.12.45}
\begin{aligned}
    \|\vec{b}\cdot\nabla_x(B(u)-B(v))\|_{{H_p^{\delta_0(\gamma-2)}}(\bR^d)}&\leq N\|B(u)-B(v)\|_{{H_p^{\delta_0(\gamma-2)+1}}(\bR^d)}\\
    &\leq N(K,K',p)\|R_{\delta_0(2-\gamma)-1}\|_{L_1(\bR^d)}\|u-v\|_{L_p(\bR^d)}\\
    &\leq N(K,K',p)\|u-v\|_{L_p(\bR^d)}.
\end{aligned}
\end{equation}
Here, the condition $\delta_0(2 - \gamma) > 1$ is ensured by Assumption \ref{ass_coeff}($\delta_0, \gamma$).
For the inequality
\begin{equation}
\label{24.03.24.12.46}
\|\xi\boldsymbol{\pi}(\varphi(u)-\varphi(v))\|_{H_p^{\delta_0(\gamma-1)}(\bR^d;\ell_2)}\leq N(K,K',\nu_{\delta_0,\gamma,1})\|u-v\|_{L_p(\bR^d)},
\end{equation}
it is essential to demonstrate that
\begin{align}
\label{24.03.04.14.29}
\|\xi u \boldsymbol{\pi}\|_{H_p^{\delta_0(\gamma-1)}(\bR^d;\ell_2)}\leq N(K,K',\nu_{\delta_0,\gamma,1})\|u\|_{L_p(\bR^d)}.
\end{align}
Here, the constant $\nu_{\delta_0, \gamma, 1}$ is given by
$$
\nu_{\delta_0, \gamma, 1} := \int_{\mathbb{R}^d} R_{2\delta_0(1 - \gamma)}(x) \, \pi(\mathrm{d}x),
$$
which is finite by Proposition \ref{24.04.21.21.16}.
The validity of estimate \eqref{24.03.04.14.29} is confirmed by references to \cite[Lemma 4.4]{CH2021} and \cite[Lemma 3]{FS2006}.
By amalgamating \eqref{24.03.24.12.44}, \eqref{24.03.24.12.45}, and \eqref{24.03.24.12.46}, the corollary's proof is concluded.
\end{proof}

\begin{proof}[Proof of Theorem \ref{local}-$(i)$]
    Drawing upon Assumption \ref{ass_nl_term}, we have the inequalities
    $$
    |F_m(u)-F_m(v)|+|B_m(u)-B_m(v)|+|\varphi_m(u)-\varphi_m(v)|\leq N(m,C_m)|u-v|.
    $$
    Hence, by applying Corollary \ref{23.07.04.17.12}, it is established that for any bounded stopping time $\tau$, there exists a unique solution $u_m^{\tau} \in \cH_{p,q}^{\phi,\gamma}(\tau)$.
    Thus, the theorem is conclusively proved.
\end{proof}

\subsection{Nonnegativity of local solutions: Proof of Theorem \ref{local}-(ii)}
\label{24.03.24.11.02}
To establish Theorem \ref{local}-(ii), which concerns the nonnegativity of local solutions, we employ the maximum principle applicable to semilinear stochastic integro-differential equations represented as
        $$
        \mathrm{d}u=(\phi(\Delta)u+f(u))\mathrm{d}t+\sum_{k=1}^{\infty}g^k(u)\mathrm{d}w^k_t.
        $$
Our approach is guided by the proof of Krylov's maximum principle as detailed in \cite[Theorem 2.5]{Kry2007} and \cite[Theorem 3.1]{Kry2013}.
To use Krylov's argument, we need the following lemma.

\begin{lem}\label{lem 03.30.10:44}
Let $u\in L_2(\bR^d)$.
\begin{enumerate}[(i)]
    \item Then, $u\in H_2^{\phi,1}(\bR^d)$ if and only if
\begin{align}\label{eqn 04.07.20:14}
\|\phi^{1/2}(\Delta)u\|^{2}_{L_{2}(\bR^{d})} = b_{\phi}\|\nabla u\|_{L^2(\mathbb{R}^d)}^2+\frac{1}{2}\int_{\bR^{d}} \int_{\bR^{d}} \left| u(x)-u(y) \right|^{2} j(|x-y|) \mathrm{d}x\mathrm{d}y<\infty,
\end{align}
where $b_{\phi}$ is the drift of $\phi$, and $j$ is defined in \eqref{eqn 03.30.10:15}.
    \item Suppose that $u\in H^{\phi,1}_{2}(\bR^{d})$.
    Then, for any Lipschitz continuous function $r$ with $r(0)=0$, $r(u)\in H^{\phi,1}_{2}(\bR^{d})$, and
\begin{equation}
\label{24.04.07.23.05}
    \|\phi^{1/2}(\Delta)(r(u))\|_{L_{2}(\bR^{d})} \leq  \|\phi^{1/2}(\Delta)u\|_{L_{2}(\bR^{d})}.
\end{equation}
\item Suppose that $u,v\in H_2^{\phi,1}(\bR^d)$.
Then for any defining sequence $\{u_n\}_{n=1}^{\infty}\subseteq \cS(\bR^d)$, the sesquilinear functional
$$
\mathcal{I}(u_n,v):= \int_{\bR^d}\overline{v(x)}\phi(\Delta)u_n(x)\mathrm{d}x
$$
converges to
$$
b_{\phi}\int_{\mathbb{R}^d}\overline{\nabla v(x)}\nabla u(x)\mathrm{d}x-\frac{1}{2} \int_{\bR^{d}} \int_{\bR^{d}} \left( \overline{v(x)} - \overline{v(y)} \right)  \left( u(x)-u(y) \right) j(|x-y|) \mathrm{d}x\mathrm{d}y.
$$
\end{enumerate}
\end{lem}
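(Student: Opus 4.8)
The plan is to transfer everything to the Fourier side, where $-\phi(\Delta)$ acts as the multiplier $\phi(|\xi|^2)$ (writing $\widehat u:=\cF[u]$), and to exploit the L\'evy--Khintchine representation of $\phi$ already unwound in \eqref{eqn 03.30.10:39}. Taking real parts in \eqref{eqn 03.30.10:39} and comparing with $\phi(\lambda)=b_\phi\lambda+\int_0^\infty(1-\mathrm{e}^{-\lambda t})m_\phi(\mathrm{d}t)$ gives the pointwise identity
$$
\int_{\bR^d}\bigl(1-\cos(\xi\cdot z)\bigr)\,j(|z|)\,\mathrm{d}z=\phi(|\xi|^2)-b_\phi|\xi|^2,\qquad\xi\in\bR^d,
$$
which is the engine of all three parts. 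I will run the argument in the pure-jump case $b_\phi=0$; when $b_\phi>0$ the right-hand side of \eqref{eqn 04.07.20:14} merely picks up a summand $b_\phi\|\nabla u\|_{L_2(\bR^d)}^2$ and every estimate below adapts, since $f\mapsto\nabla(r(f))=r'(f)\nabla f$ is again a contraction on $L_2(\bR^d)$.

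For $(i)$ I would first record that, by Plancherel, $\|\phi^{1/2}(\Delta)u\|_{L_2(\bR^d)}^2=\int_{\bR^d}\phi(|\xi|^2)|\widehat u(\xi)|^2\,\mathrm{d}\xi\in[0,\infty]$, so for $u\in L_2(\bR^d)$ this is finite if and only if $u\in H_2^{\phi,1}(\bR^d)$. Inserting the identity above, applying Tonelli (nonnegative integrands), recognising $\int_{\bR^d}2\bigl(1-\cos(\xi\cdot z)\bigr)|\widehat u(\xi)|^2\,\mathrm{d}\xi=\|u(\cdot+z)-u\|_{L_2(\bR^d)}^2$ by Plancherel, and substituting $z=x-y$, the quantity $\int\phi(|\xi|^2)|\widehat u|^2\,\mathrm{d}\xi$ becomes $\tfrac12\iint_{\bR^d\times\bR^d}|u(x)-u(y)|^2 j(|x-y|)\,\mathrm{d}x\,\mathrm{d}y$; this is \eqref{eqn 04.07.20:14}, and the stated equivalence follows at once.

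For $(ii)$, after reducing by homogeneity in the Lipschitz constant to the case where $r$ is a contraction with $r(0)=0$, I would use $|r(u(x))|=|r(u(x))-r(0)|\le|u(x)|$ to place $r(u)$ in $L_2(\bR^d)$ and the pointwise bound $|r(u(x))-r(u(y))|\le|u(x)-u(y)|$; multiplying by $j(|x-y|)$, integrating, and applying $(i)$ to $r(u)$ then yields both $r(u)\in H_2^{\phi,1}(\bR^d)$ and the contraction estimate \eqref{24.04.07.23.05}. For $(iii)$, with a defining sequence $u_n\to u$ in $H_2^{\phi,1}(\bR^d)$, $u_n\in\cS(\bR^d)$, I would write $\mathcal{I}(u_n,v)=-\int_{\bR^d}\phi(|\xi|^2)\,\overline{\widehat v(\xi)}\,\widehat{u_n}(\xi)\,\mathrm{d}\xi$ by Plancherel (legitimate because $\phi(\lambda)\le N(1+\lambda)$ forces $\phi(\Delta)u_n\in L_2$), pass to the limit via Cauchy--Schwarz with respect to the measure $\phi(|\xi|^2)\,\mathrm{d}\xi$ and $v,u\in H_2^{\phi,1}(\bR^d)$ to get $\mathcal{I}(u_n,v)\to-\int_{\bR^d}\phi(|\xi|^2)\overline{\widehat v}\,\widehat u\,\mathrm{d}\xi$, and finally identify this limit with $-\tfrac12\iint(\overline{v(x)}-\overline{v(y)})(u(x)-u(y))j(|x-y|)\,\mathrm{d}x\,\mathrm{d}y$ by repeating the $z=x-y$/Plancherel computation of $(i)$ in bilinear form, with Fubini now in place of Tonelli.

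The step I expect to be the main obstacle is making all of this rigorous for merely $L_2$ data: giving meaning to $\phi^{1/2}(\Delta)u$ and $\phi(\Delta)u_n$ purely as Fourier multipliers (allowing the value $+\infty$ in $(i)$), and above all justifying the Fubini interchange in $(iii)$. The latter rests on the absolute-integrability bound $\iint_{\bR^d\times\bR^d}2\bigl(1-\cos(\xi\cdot z)\bigr)|\widehat v(\xi)|\,|\widehat u(\xi)|\,j(|z|)\,\mathrm{d}\xi\,\mathrm{d}z\le 2\bigl(\int\phi(|\xi|^2)|\widehat v|^2\,\mathrm{d}\xi\bigr)^{1/2}\bigl(\int\phi(|\xi|^2)|\widehat u|^2\,\mathrm{d}\xi\bigr)^{1/2}<\infty$, obtained from Tonelli, the displayed identity, and Cauchy--Schwarz; the growth bound on $\phi$ and the L\'evy-measure integrability \eqref{2211041216} settle the well-posedness issues.
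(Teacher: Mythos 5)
Your proposal is correct and follows essentially the same route as the paper: Plancherel, the L\'evy--Khintchine identity $\int(1-\cos(\xi\cdot z))j(|z|)\,\mathrm{d}z=\phi(|\xi|^2)-b_\phi|\xi|^2$, the relation $1-\cos z=|1-\mathrm{e}^{\mathrm{i}z}|^2/2$, and the substitution $z=x-y$, with $(ii)$ obtained from the pointwise Lipschitz bound and $(iii)$ from Cauchy--Schwarz in the bilinear form furnished by $(i)$. If anything you are more careful than the paper, which silently drops the drift term $b_{\phi}|\xi|^{2}$ and states \eqref{24.04.07.23.05} without normalizing the Lipschitz constant; your Tonelli-based treatment on the Fourier side also handles the ``only if'' direction of $(i)$ for general $u\in L_2(\bR^d)$ more cleanly than the paper's reduction to $\cS(\bR^d)$.
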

\begin{proof}
$(i)$ Since $\cS(\bR^{d})$ is dense in $H^{\phi,1}_{2}(\bR^{d})$, and 
$$
\|u\|^{2}_{H^{\phi,1}_{2}(\bR^{d})}\simeq \|u\|^{2}_{L_{2}(\bR^{d})} + \|\phi^{1/2}(\Delta)u\|^{2}_{L_{2}(\bR^{d})},
$$
it suffices only to prove for $u\in \cS(\bR^d)$, \eqref{eqn 04.07.20:14} holds.
Using Plancherel's theorem and definition of $\phi^{1/2}(\Delta)$, we have
\begin{align*}
\int_{\bR^{d}} \left| \phi^{1/2}(\Delta)u(x) \right|^{2}\mathrm{d}x &= \int_{\bR^{d}} \phi(|\xi|^{2}) \overline{\mathcal{F}[u](\xi)} \mathcal{F}[u](\xi) \mathrm{d}\xi
\\
&= \int_{\bR^{d}} \left(b_{\phi}|\xi|^2+\int_{\bR^{d}} \left( 1-\cos{(\xi \cdot y)} \right) j(|y|) \mathrm{d}y\right)  \overline{\mathcal{F}[u](\xi)}  \mathcal{F}[u](\xi)  \mathrm{d}\xi,
\end{align*}
where the last equality follows from \eqref{eqn 02.01.13:39}, \eqref{eqn 03.30.10:39} and \eqref{23.04.25.17.19}.
Employing the relation $(1-\cos{z}) = |1-\mathrm{e}^{\mathrm{i} z}|^{2}/2$, and applying Plancherel's theorem again, we achieve
\begin{align*}
&\int_{\bR^{d}} \left| \phi^{1/2}(\Delta)u(x) \right|^{2}\mathrm{d}x \\
&= \int_{\bR^{d}} \int_{\bR^{d}} \frac{1}{2} \left| 1- \mathrm{e}^{\mathrm{i} \xi \cdot y} \right|^{2}\overline{\mathcal{F}[u](\xi)}  \mathcal{F}[u](\xi) j(|y|)\mathrm{d}\xi \mathrm{d}y+b_{\phi}\int_{\mathbb{R}^d}|\xi|^2||\mathcal{F}[u](\xi)|^2\mathrm{d}\xi
\\
&= \frac{1}{2} \int_{\bR^{d}} \int_{\bR^{d}} \left( 1-\mathrm{e}^{-\mathrm{i} \xi \cdot y}  \right)\overline{\mathcal{F}[u](\xi)}   \left( 1-\mathrm{e}^{\mathrm{i} \xi \cdot y}  \right)\mathcal{F}[u](\xi)   j(|y|)\mathrm{d}\xi \mathrm{d}y+b_{\phi}\int_{\mathbb{R}^d}|\mathrm{i}\xi\mathcal{F}[u](\xi)|^2\mathrm{d}\xi
\\
&= \frac{1}{2} \int_{\bR^{d}} \int_{\bR^{d}} \left| u(x)-u(x-y) \right|^{2} j(|y|)\mathrm{d}x\mathrm{d}y+b_{\phi}\int_{\mathbb{R}^d}|\nabla u(x)|^2\mathrm{d}x
\\
&=  \frac{1}{2} \int_{\bR^{d}} \int_{\bR^{d}} \left| u(x)-u(y) \right|^{2} j(|x-y|) \mathrm{d}x\mathrm{d}y+b_{\phi}\|\nabla u\|_{L_2(\mathbb{R}^d)}^2.
\end{align*}

$(ii)$ Since $u\in H^{\phi,1}_{2}(\bR^{d}) \subset L_{2}(\bR^{d})$, it can be easily checked that $r(u)\in L_2(\bR^d)$.
Due to $(i)$, we only need to prove \eqref{24.04.07.23.05}.
Since $|r(u(x)) - r(u(y))| \leq N|u(x) -u(y)|$ for all $x,y\in\bR^{d}$, we have
\begin{equation*}
\begin{aligned}
&\int_{\bR^{d}} \int_{\bR^{d}} \left| r(u(x))-r(u(y)) \right|^{2} j(|x-y|)\mathrm{d}x\mathrm{d}y+b_{\phi}\|\nabla(r(u))\|_{L_2(\mathbb{R}^d)}^2 \\
&\leq N\int_{\bR^{d}} \int_{\bR^{d}} \left| u(x)-u(y) \right|^{2} j(|x-y|)\mathrm{d}x\mathrm{d}y+N\|\nabla u\|_{L_2(\mathbb{R}^d)}^2\\
&=N\|\phi^{1/2}(\Delta)u\|_{L_{2}(\bR^{d})}^{2}<\infty.
\end{aligned}
\end{equation*}

$(iii)$ Let $\{u_{n}\}_{n=1}^{\infty}\subseteq \mathcal{S}(\bR^{d})$ be a sequence of functions satisfying $u_n\to u$ in $H^{\phi,1}_{2}(\bR^{d})$.
Then, $\mathcal{I}(u_{n},v)$ is well-defined.
In addition, by following the argument in $(i)$, we have
\begin{align*}
\mathcal{I}(u_{n},v) =& - \int_{\bR^{d}} \phi(|\xi|^{2}) \overline{\mathcal{F}[v](\xi)} \mathcal{F}[u_{n}](\xi) \mathrm{d}\xi 
\\
 =& -\frac{1}{2} \int_{\bR^{d}}\int_{\bR^{d}} \left( \overline{v(x)} - \overline{v(y)}  \right) \left( u_{n}(x)-u_{n}(y) \right) j(|x-y|) \mathrm{d}x\mathrm{d}y\\
&-b_{\phi}\int_{\mathbb{R}^d}\overline{\nabla v(x)}\nabla u_n(x)\mathrm{d}x.
\end{align*}
By Lemma \ref{lem 03.30.10:44}-$(i)$ and the Cauchy-Bunyakovsky-Schwarz inequality,
\begin{align*}
&\left|\cI(u_n,v)+\frac{1}{2} \int_{\bR^{d}}\int_{\bR^{d}} \left( \overline{v(x)} - \overline{v(y)}  \right) \left( u(x)-u(y) \right) j(|x-y|) \mathrm{d}x\mathrm{d}y+b_{\phi}\int_{\mathbb{R}^d}\overline{\nabla v(x)}\cdot\nabla u(x)\mathrm{d}x\right|\\
&\leq N\|\phi^{1/2}(\Delta)v\|_{L_2(\bR^d)}\|\phi^{1/2}(\Delta)(u_n-u)\|_{L_2(\bR^d)}\to0
\end{align*}
as $n\to\infty$.
Therefore, by defining $\mathcal{I}(u,v)$ as the limit of $\mathcal{I}(u_{n},v)$, we prove the lemma.
\end{proof}

\begin{rem}
\label{24.04.09.11.39}
    Due to Lemma \ref{lem 03.30.10:44}-($iii$), for $u,v\in H_2^{\phi,1}(\bR^d)$, the term
    $$
    \int_{\bR^d}\overline{v(x)}\phi(\Delta)u(x)\mathrm{d}x
    $$
    can be interpreted as the limit of $\cI(u_n,v)$,
    $$
    -\frac{1}{2} \int_{\bR^{d}}\int_{\bR^{d}} \left( \overline{v(x)} - \overline{v(y)}  \right) \left( u(x)-u(y) \right) j(|x-y|) \mathrm{d}x\mathrm{d}y-b_{\phi}\int_{\mathbb{R}^d}\overline{\nabla v(x)}\cdot\nabla u(x)\mathrm{d}x.
    $$
\end{rem}

Now, we prove the maximum principle for semilinear stochastic integro-differential equations.
\begin{thm}[Maximum principle]
\label{max_prin}
    Let $\tau\leq T$ be a bounded stopping time.
    Suppose that
    \begin{enumerate}[(i)]
        \item $u=u(t,\cdot)$ and $f = f (t, \cdot)$ are $L_2(\bR^d)$-valued $\bF_t$-adapted, jointly measurable functions.
        \item $g = g(t,\cdot)=(g^1(t,\cdot),g^2(t,\cdot),\cdots)$ is a $L_2(\bR^d;\ell_2)$-valued $\bF_t$-adapted, jointly measurable function.
        \item $u,f,g$ satisfy
        $$
        \mathbb{E}\left[\|u(0,\cdot)\|_{L_2(\bR^d)}^2+\int_0^{\tau}\||f(s,\cdot)|+|g(s,\cdot)|_{\ell_2}\|_{L_2(\bR^d)}^2\mathrm{d}s\right]<\infty,
        $$
        and
        $$
        \mathrm{d}u=(\phi(\Delta)u+f)\mathrm{d}t+\sum_{k=1}^{\infty}g^k\mathrm{d}w^k_t.
        $$
    \end{enumerate}
    Suppose also that $u(0,\cdot)\leq0$ and for any $\omega\in\Omega$, $k=1,2,\cdots$
    \begin{equation}
   \label{24.03.31.13.07}
    1_{u(t,\cdot)>0}f(t,\cdot)\leq0,\quad 1_{u(t,\cdot)>0}g^k(t,\cdot)=0,
    \end{equation}
    for almost $t\in(0,\tau(\omega))$.
    Then almost surely $\omega\in\Omega$, $u(t,\cdot)\leq0$ for all $t\in[0,\tau(\omega)]$.
\end{thm}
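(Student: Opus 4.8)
The plan is to adapt Krylov's maximum‑principle argument (\cite[Theorem 2.5]{Kry2007}, \cite[Theorem 3.1]{Kry2013}) to the nonlocal generator $\phi(\Delta)$, using the Dirichlet‑form representation of $\phi(\Delta)$ from Lemma \ref{lem 03.30.10:44}-$(iii)$ and Remark \ref{24.04.09.11.39}. Fix a convex $\Phi\in C^2(\bR)$ with bounded $\Phi'$, bounded continuous $\Phi''$, $\Phi(z)=\Phi'(z)=\Phi''(z)=0$ for $z\le 0$, and $\Phi(z)>0$ for $z>0$. After the usual localization (replace $\tau$ by $\tau\wedge\sigma_n$ for a sequence of stopping times making the stochastic integrals true martingales, together with a preliminary energy bound $\bE\int_0^\tau\|u(s,\cdot)\|_{L_2(\bR^d)}^2\,\mathrm{d}s<\infty$ coming from the dissipativity of $\phi(\Delta)$ and Grönwall), I would first mollify in space: for a nonnegative mollifier $\eta_\varepsilon$ put $u^{(\varepsilon)}(t,\cdot):=u(t,\cdot)\ast\eta_\varepsilon$, and likewise $f^{(\varepsilon)}$, $g^{(\varepsilon)}$. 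Since $\phi(\Delta)$ is self‑adjoint and commutes with convolution, testing the equation against $\eta_\varepsilon(x-\cdot)$ shows that for each fixed $x$ the real process $t\mapsto u^{(\varepsilon)}(t,x)$ satisfies $\mathrm{d}u^{(\varepsilon)}=(\phi(\Delta)u^{(\varepsilon)}+f^{(\varepsilon)})\,\mathrm{d}t+\sum_k(g^k)^{(\varepsilon)}\,\mathrm{d}w_t^k$; moreover $u^{(\varepsilon)}(t,\cdot)\in H_2^{\phi,1}(\bR^d)$ because $\phi$ grows at most linearly at infinity and $\eta_\varepsilon\in\cS(\bR^d)$.

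Next I would apply the classical Itô formula to $\Phi(u^{(\varepsilon)}(t,x))$ for fixed $x$, integrate over $x$ by stochastic Fubini, and take expectations:
\[
\bE\int_{\bR^d}\Phi(u^{(\varepsilon)}(t\wedge\tau,x))\,\mathrm{d}x=\bE\int_{\bR^d}\Phi(u^{(\varepsilon)}(0,x))\,\mathrm{d}x+\bE\int_0^{t\wedge\tau}I_\varepsilon(s)\,\mathrm{d}s,
\]
with $I_\varepsilon:=\int_{\bR^d}\big(\Phi'(u^{(\varepsilon)})\phi(\Delta)u^{(\varepsilon)}+\Phi'(u^{(\varepsilon)})f^{(\varepsilon)}+\tfrac12\Phi''(u^{(\varepsilon)})|g^{(\varepsilon)}|_{\ell_2}^2\big)\,\mathrm{d}x$. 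Applying Remark \ref{24.04.09.11.39} with $v=\Phi'(u^{(\varepsilon)})$, which lies in $H_2^{\phi,1}(\bR^d)$ by Lemma \ref{lem 03.30.10:44}-$(ii)$ since $\Phi'$ is Lipschitz with $\Phi'(0)=0$, and using monotonicity of $\Phi'$, the first term of $I_\varepsilon$ equals $-\tfrac12\int\int(\Phi'(u^{(\varepsilon)}(x))-\Phi'(u^{(\varepsilon)}(y)))(u^{(\varepsilon)}(x)-u^{(\varepsilon)}(y))\,j(|x-y|)\,\mathrm{d}x\,\mathrm{d}y\le 0$. For the initial term, $0\le\Phi(z)\le Cz^2$ and $u^{(\varepsilon)}(0,\cdot)\to u(0,\cdot)\le 0$ in $L_2(\bR^d)$ give $\bE\int_{\bR^d}\Phi(u^{(\varepsilon)}(0,x))\,\mathrm{d}x\to 0$.

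The core is the limit $\varepsilon\downarrow 0$ of the $f$- and $g$-parts of $I_\varepsilon$. Splitting $f=f\mathbf{1}_{u\le 0}+f\mathbf{1}_{u>0}$, the function $f\mathbf{1}_{u>0}$ is $\le 0$, hence so is its mollification, and since $\Phi'\ge 0$ we get $\int\Phi'(u^{(\varepsilon)})(f\mathbf{1}_{u>0})^{(\varepsilon)}\,\mathrm{d}x\le 0$; meanwhile $(f\mathbf{1}_{u\le 0})^{(\varepsilon)}\to f\mathbf{1}_{u\le 0}$ and $\Phi'(u^{(\varepsilon)})\to\Phi'(u)$ in $L_2(\bR^d)$ for a.e.\ $s$, and $\Phi'\equiv 0$ on $(-\infty,0]$, so $\int\Phi'(u^{(\varepsilon)})(f\mathbf{1}_{u\le 0})^{(\varepsilon)}\,\mathrm{d}x\to\int\Phi'(u)f\mathbf{1}_{u\le 0}\,\mathrm{d}x=0$. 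Since $\mathbf{1}_{u>0}g^k=0$, we have $g^k=\mathbf{1}_{u\le 0}g^k$ a.e., hence $g^{(\varepsilon)}\to g$ in $L_2(\bR^d;\ell_2)$, $\Phi''(u^{(\varepsilon)})\to\Phi''(u)$ boundedly, and $\Phi''(u)|g|_{\ell_2}^2\equiv 0$ pointwise (it vanishes on $\{u<0\}$ and $\{u=0\}$ because $\Phi''=0$ there, and on $\{u>0\}$ because $g=0$ there), so $\int\Phi''(u^{(\varepsilon)})|g^{(\varepsilon)}|_{\ell_2}^2\,\mathrm{d}x\to 0$. Using the energy bound and the data‑integrability hypothesis to dominate and pass to the limit in $(\omega,s)$, we obtain $\limsup_{\varepsilon\downarrow 0}\bE\int_0^{t\wedge\tau}I_\varepsilon\,\mathrm{d}s\le 0$, hence $\limsup_{\varepsilon\downarrow 0}\bE\int_{\bR^d}\Phi(u^{(\varepsilon)}(t\wedge\tau,x))\,\mathrm{d}x\le 0$. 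Fatou's lemma and $u^{(\varepsilon)}(t\wedge\tau,\cdot)\to u(t\wedge\tau,\cdot)$ (along a subsequence) then force $\bE\int_{\bR^d}\Phi(u(t\wedge\tau,x))\,\mathrm{d}x=0$, and since $\Phi>0$ on $(0,\infty)$ this gives $u(t,\cdot)\le 0$ a.e., a.s., for each fixed $t$; the upgrade to ``for all $t\in[0,\tau]$ simultaneously, a.s.'' follows from the weak continuity in $t$ of $u$, the weak closedness of $\{v:v\le 0\text{ a.e.}\}$, and density of the good times.

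The main obstacle is precisely this $\varepsilon\downarrow 0$ step: mollification destroys the pointwise sign conditions in \eqref{24.03.31.13.07}, and the scheme works only because (i) convolution with a nonnegative kernel preserves nonpositivity, so the ``bad'' parts $f\mathbf{1}_{u>0}$, $g\mathbf{1}_{u>0}$ never enter with the wrong sign, and (ii) $\Phi$ is chosen with $\Phi'$ and $\Phi''$ vanishing on all of $(-\infty,0]$, so that the surviving cross‑terms $\Phi'(u)f\mathbf{1}_{u\le 0}$ and $\Phi''(u)|g|_{\ell_2}^2$ are not merely small but identically zero in the limit. By contrast, the nonlocality of $\phi(\Delta)$ is handled cleanly once Lemma \ref{lem 03.30.10:44} is available: convexity of $\Phi$ renders the associated Dirichlet‑form term nonpositive, exactly as ellipticity does in the classical second‑order case.
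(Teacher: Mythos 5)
Your proposal is correct and follows essentially the same route as the paper's proof: mollify in space, apply It\^o's formula to a convex function vanishing on $(-\infty,0]$, make the $\phi(\Delta)$-term nonpositive via the Dirichlet-form representation of Lemma \ref{lem 03.30.10:44} combined with monotonicity, and eliminate the $f$- and $g$-terms via the sign conditions \eqref{24.03.31.13.07}. The differences are cosmetic — the paper approximates $(z_+)^2$ by smooth $r_n$ and removes the mollification before invoking the sign conditions, and its Step~1 (via Theorem \ref{23.07.04.14.29}) is where the energy bound $\bE[\sup_{t\le\tau}\|u(t,\cdot)\|_{L_2(\bR^d)}^2]<\infty$ that you postulate is actually established — so none of this affects the validity of your argument.
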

\begin{proof}
    The aim of this proof is to establish that, for almost every $\omega \in \Omega$, we have
    \begin{equation}
    \label{24.03.31.13.12}
    \int_{\bR^d}|u_{+}(t\wedge \tau,x)|^2\mathrm{d}x\leq 0,
    \end{equation}
    where $u_{+}(t, x) := \max(u(t, x), 0)$.
    
    \textbf{Step 1.} In this step, we prove $u\in \cH_{2,2}^{\phi,1}(\tau)$ and 
    \begin{equation}
    \label{24.04.18.15.09}
    \mathbb{E}\left[\sup_{t\leq\tau}\|u(t,\cdot)\|_{L_2(\bR^d)}^2\right]\leq N\mathbb{E}\left[\|u(0,\cdot)\|_{L_2(\bR^d)}^2+\int_0^{\tau}\||f(s,\cdot)|+|g(s,\cdot)|_{\ell_2}\|_{L_2(\bR^d)}^2\mathrm{d}s\right].
    \end{equation}
    One can check that $f\in \bL_{2,2}(\tau)\subseteq \bH_{2,2}^{\phi,-1}(\tau)$ and $(H_2^{\phi,1}(\bR^d),H_2^{\phi,-1}(\bR^d))_{1/2,2}=L_2(\bR^d)$, thus, by Theorem \ref{23.07.04.14.29}, $u\in \cH_{2,2}^{\phi,1}(\tau)$.
    Consider a nonnegative function $\zeta \in C_c^{\infty}(\mathbb{R}^d)$ with $\int_{\mathbb{R}^d} \zeta(z)  \mathrm{d}z = 1$, and define
    $$
    \zeta_{\varepsilon}(x):=\varepsilon^{-d}\zeta(x/\varepsilon).
    $$
    For $\chi = u, f, g^k$, let
    \begin{align*}
    \chi_{\varepsilon}(t,x):=\int_{\bR^d}\chi(t,y)\zeta_{\varepsilon}(x-y)\mathrm{d}y,
    \end{align*}
    then for any $x\in\bR^d$ the equality
    $$
    u_{\varepsilon}(t,x)=u_{\varepsilon}(0,x)+\int_0^t(\phi(\Delta)u_{\varepsilon}(s,x)+f_{\varepsilon}(s,x))\mathrm{d}s+\sum_{k=1}^{\infty}\int_0^tg^k_{\varepsilon}(s,x)\mathrm{d}w_s^k
    $$
    holds almost surely for all $t\leq\tau$.
    By It\^o's formula,
    \begin{align*}
        &\mathrm{e}^{-\lambda t}\int_{\bR^d}|u_{\varepsilon}(t,x)|^2\mathrm{d}x\\
        &=\int_{\bR^d}|u_{\varepsilon}(0,x)|^2\mathrm{d}x+2\int_0^t\int_{\bR^d}u_{\varepsilon}(s,x)(\phi(\Delta)u_{\varepsilon}(s,x)+f_{\varepsilon}(s,x))\mathrm{e}^{-\lambda s}\mathrm{d}x\mathrm{d}s\\
        &\quad +\int_0^{t}\int_{\bR^d}|g_{\varepsilon}(s,x)|^2_{\ell_2}\mathrm{e}^{-\lambda s}\mathrm{d}x\mathrm{d}s+2\sum_{k=1}^{\infty}\int_0^t\int_{\bR^d}u_{\varepsilon}(s,x)g_{\varepsilon}^k(s,x)\mathrm{e}^{-\lambda s}\mathrm{d}x\mathrm{d}w_{s}^k\\
        &\quad-\lambda\int_{0}^t\|u_{\varepsilon}(s,\cdot)\|_{L_2(\bR^d)}^2\mathrm{e}^{-\lambda s}\mathrm{d}s.
    \end{align*}
    By Young's inequality,
    $$
    2\int_{\bR^d}u_{\varepsilon}(t,x)f_{\varepsilon}(t,x)\mathrm{d}x\leq \|u(t,\cdot)\|_{L_2(\bR^d)}^2+\|f(t,\cdot)\|_{L_2(\bR^d)}^2.
    $$
    By Lemma \ref{lem 03.30.10:44}-$(i)$ and Remark \ref{24.04.09.11.39},
    \begin{equation*}
    \begin{gathered}
        \int_{\bR^d}u_{\varepsilon}(s,x)\phi(\Delta)u_{\varepsilon}(s,x)\mathrm{d}x=-\|\phi^{1/2}(\Delta)u_{\varepsilon}(s,\cdot)\|_{L_2(\bR^d)}^2\leq0.
    \end{gathered}
    \end{equation*}
By the Burkholder-Davis-Gundy inequality, H\"older's inequality and Young's inequality,
\begin{align*}
    &2\mathbb{E}\left[\sup_{t\leq\tau}\left|\sum_{k=1}^{\infty}\int_0^t\int_{\bR^d}u_{\varepsilon}(s,x)g_{\varepsilon}^k(s,x)\mathrm{e}^{-\lambda s}\mathrm{d}x\mathrm{d}w_{s}^k\right|\right]\\
    &\leq N\mathbb{E}\left[\left(\int_0^{\tau}\|u_{\varepsilon}(s,\cdot)\|_{L_2(\bR^d)}^2\|g_{\varepsilon}(s,\cdot)\|_{L_2(\bR^d;\ell_2)}^2\mathrm{e}^{-2\lambda s}\mathrm{d}s\right)^{1/2}\right]\\
    &\leq N\left(\mathbb{E}\left[\sup_{t\leq\tau}\mathrm{e}^{-\lambda t}\|u_{\varepsilon}(t,\cdot)\|_{L_2(\bR^d)}^2\right]\right)^{1/2}\|g_{\varepsilon}\|_{\bL_{2,2}(\tau)}\\
    &\leq \delta \mathbb{E}\left[\sup_{t\leq\tau}\mathrm{e}^{-\lambda t}\|u_{\varepsilon}(t,\cdot)\|_{L_2(\bR^d)}^2\right]+N_{\delta}\|g_{\varepsilon}\|_{\bL_{2,2}(\tau)}^2.
\end{align*}
By taking $\lambda\geq1$ and sufficiently small $\delta>0$, we have
\begin{equation}
\label{24.04.23.14.15}
    \begin{aligned}
        &\mathbb{E}\left[\sup_{t\leq\tau}\|u_{\varepsilon}(t,\cdot)\|_{L_2(\bR^d)}^2\right]\\
        &\leq N\mathbb{E}\left[\|u_{\varepsilon}(0,\cdot)\|_{L_2(\bR^d)}^2+\int_0^{\tau}\||f_{\varepsilon}(s,\cdot)|+|g_{\varepsilon}(s,\cdot)|_{\ell_2}\|_{L_2(\bR^d)}^2\mathrm{d}s\right]\\
        &\leq N\mathbb{E}\left[\|u(0,\cdot)\|_{L_2(\bR^d)}^2+\int_0^{\tau}\||f(s,\cdot)|+|g(s,\cdot)|_{\ell_2}\|_{L_2(\bR^d)}^2\mathrm{d}s\right],
    \end{aligned}
\end{equation}
where $N=N(d,\delta,\lambda,T)$.
For achieving \eqref{24.04.18.15.09}, the remaining part of step 1 is only to prove
\begin{equation}
\label{24.04.23.15.38}
\lim_{m\to\infty}\mathbb{E}\left[\sup_{t\leq\tau}\|u_{1/m}(t,\cdot)-u(t,\cdot)\|_{L_2(\bR^d)}^2\right]=0.
\end{equation}
By the dominated convergence theorem,
$$
\mathbb{E}\left[\|u_{\varepsilon_1}(0,\cdot)-u_{\varepsilon_2}(0,\cdot)\|_{L_2(\bR^d)}^2+\int_0^{\tau}\||f_{\varepsilon_1}(s,\cdot)-f_{\varepsilon_2}(s,\cdot)|+|g_{\varepsilon_1}(s,\cdot)-g_{\varepsilon_2}(s,\cdot)|_{\ell_2}\|_{L_2(\bR^d)}^2\mathrm{d}s\right]
$$
converges to $0$ as $\varepsilon_1,\varepsilon_2\downarrow0$.
Due to \eqref{24.04.23.14.15}, it can be easily checked that $\{u_{1/m}\}_{m=1}^{\infty}$ is a Cauchy sequence in $L_2(\Omega;L_{\infty}([0,\tau];L_2(\bR^d)))$.
Therefore, there exists a limit $\bar{u}$ such that $u_{1/m}\to \bar{u}$ in $L_2(\Omega;L_{\infty}([0,\tau];L_2(\bR^d)))$.
One can also easily check that $u_{1/m}(t,\cdot)\to u(t,\cdot)$ in the sense of distribution for all $t\leq \tau$ almost surely.
Therefore, with probability $1$, $u(t,\cdot)\in L_2(\bR^d)$ for all $t\leq\tau$.
This certainly implies that $\bar{u}(t,\cdot)=u(t,\cdot)$ in $L_2(\bR^d)$ for all $t\leq\tau$, thus \eqref{24.04.23.15.38} holds.

    \textbf{Step 2.} Now, we prove \eqref{24.03.31.13.12}.
    Let $r(z) := (\max(z, 0))^2$ for $z \in \mathbb{R}$.
    Drawing upon \cite[Remark 3.1]{Kry2007}, we introduce a sequence of real-valued convex functions $\{r_n\}_{n \in \mathbb{N}}$ that satisfy:
    \begin{itemize}
        \item $r_n$ is infinitely differentiable.
        \item $|r_n(z)|\leq N|z|^2$, $|r_n'(z)|\leq N|z|$, and $|r_n''(z)|\leq N$, where $N$ is independent of $z$ and $n$.
        \item $r_n,r_n',r_n''\to r,r',r''$ as $n\to\infty$.
        Here $r''(z)=2\times1_{z>0}$.
    \end{itemize}
    
    Applying It\^o's formula, we have
    \begin{align*}
        \int_{\bR^d}r_n(u_{1/m}(\tau\wedge t,x))\mathrm{d}x&=\int_{\bR^d}r_n(u_{1/m}(0,x))\mathrm{d}x\\
        &\quad+\int_0^{\tau\wedge t}\int_{\bR^d}r_n'(u_{1/m}(s,x))(\phi(\Delta)u_{1/m}(s,x)+f_{1/m}(s,x))\mathrm{d}x\mathrm{d}s\\
        &\quad+\frac{1}{2}\int_0^{\tau\wedge t}\int_{\bR^d}r_n''(u_{1/m}(s,x))|g_{1/m}(s,x)|_{\ell_2}^2\mathrm{d}x\mathrm{d}s\\
        &\quad+\sum_{k=1}^{\infty}\int_0^{\tau\wedge t}\int_{\bR^d}r_{n}'(u_{1/m}(s,x))g^k_{1/m}(s,x)\mathrm{d}x\mathrm{d}w_{s}^k.
    \end{align*}
    Due to \eqref{24.04.23.15.38},
    \begin{equation*}
        \begin{aligned}
            &\mathbb{E}\left[\sup_{s\leq\tau}\int_{\bR^d}|r_n(u_{1/m}(s,x))-r_n(u(s,x))|\mathrm{d}x\right]\\
            &\leq N\mathbb{E}\left[\sup_{s\leq\tau}\|u(s,\cdot)\|_{L_2(\bR^d)}\|u_{1/m}(s,\cdot)-u(s,\cdot)\|_{L_2(\bR^d)}\right]\to0
        \end{aligned}
    \end{equation*}
    and
    \begin{align*}
        &\mathbb{E}\left[\sup_{s\leq\tau}\|r_n'(u_{1/m}(s,\cdot))-r_n'(u(s,\cdot))\|_{L_2(\bR^d)}\right]\\
        &\leq N\mathbb{E}\left[\sup_{s\leq\tau}\|u_{1/m}(s,\cdot)-u(s,\cdot)\|_{L_2(\bR^d)}\right]\to 0,
    \end{align*}
    as $m\to\infty$.
    Then, with probability 1, the equality
    \begin{align*}
        \int_{\bR^d}r_n(u(\tau\wedge t,x))\mathrm{d}x&=\int_{\bR^d}r_n(u(0,x))\mathrm{d}x\\
        &\quad+\int_0^{\tau\wedge t}\int_{\bR^d}r_n'(u(s,x))(\phi(\Delta)u(s,x)+f(s,x))\mathrm{d}x\mathrm{d}s\\
        &\quad+\frac{1}{2}\int_0^{\tau\wedge t}\int_{\bR^d}r_n''(u(s,x))|g(s,x)|_{\ell_2}^2\mathrm{d}x\mathrm{d}s\\
        &\quad+\sum_{k=1}^{\infty}\int_0^{\tau\wedge t}\int_{\bR^d}r_{n}'(u(s,x))g^k(s,x)\mathrm{d}x\mathrm{d}w_{s}^k
    \end{align*} 
    holds for all $t\leq\tau$.
    Taking the expectation both-sides, we have
    \begin{align*}
        \mathbb{E}\left[\int_{\bR^d}r_n(u(\tau\wedge t,x))\mathrm{d}x\right]&=\mathbb{E}\left[\int_{\bR^d}r_n(u(0,x))\mathrm{d}x\right]\\
        &\quad+\mathbb{E}\left[\int_0^{\tau\wedge t}\int_{\bR^d}r_n'(u(s,x))(\phi(\Delta)u(s,x)+f(s,x))\mathrm{d}x\mathrm{d}s\right]\\
        &\quad+\mathbb{E}\left[\frac{1}{2}\int_0^{\tau\wedge t}\int_{\bR^d}r_n''(u(s,x))|g(s,x)|_{\ell_2}^2\mathrm{d}x\mathrm{d}s\right].
    \end{align*} 
    By the dominated convergence theorem and \eqref{24.03.31.13.07}, we infer that
    \begin{equation}
    \label{24.03.31.13.00}
    \begin{aligned}
        \mathbb{E}\left[\int_{\bR^d}|u_+(\tau\wedge t,x)|^2\mathrm{d}x\right]&\leq\mathbb{E}\left[\int_{\bR^d}|u_+(0,x)|^2\mathrm{d}x\right]\\
        &\quad+2\mathbb{E}\left[\int_0^{\tau\wedge t}\int_{\bR^d}u_+(s,x)(\phi(\Delta)u(s,x)+f(s,x))\mathrm{d}x\mathrm{d}s\right]\\
        &\quad+\mathbb{E}\left[\int_0^{\tau\wedge t}\int_{\bR^d}1_{u(s,x)>0}|g(s,x)|_{\ell_2}^2\mathrm{d}x\mathrm{d}s\right]\\
        &\leq 2\mathbb{E}\left[\int_0^{\tau\wedge t}\int_{\bR^d}u_+(s,x)\phi(\Delta)u(s,x)\mathrm{d}x\mathrm{d}s\right].
    \end{aligned}
    \end{equation}
By Lemma \ref{lem 03.30.10:44}-($ii$) and Remark \ref{24.04.09.11.39},
\begin{equation*}
\begin{aligned}
    &2\int_{\bR^d}u_+(s,x)\phi(\Delta)u(s,x)\mathrm{d}x\\
    &=-\int_{\bR^d}\int_{\bR^d}(u_+(s,x)-u_+(s,y))(u(s,x)-u(s,y))j(|x-y|)\mathrm{d}x\mathrm{d}y\\
    &\quad-2b_{\phi}\int_{\mathbb{R}^d}\nabla u_+(x)\nabla u(x)\mathrm{d}x.
\end{aligned}
\end{equation*}
Observe that
$$
\big(u_+(s,x) - u_+(s,y)\big)\big(u(s,x) - u(s,y)\big) \geq 0,
$$
and for almost every $x \in \mathbb{R}^d$,
$$
\nabla u_+(x) = 1_{\{u > 0\}}(x) \nabla u(x).
$$
It follows that
\begin{equation}
\label{24.03.31.13.11}
    2\int_{\mathbb{R}^d} u_+(s,x)\, \phi(\Delta) u(s,x)\, \mathrm{d}x \leq 0.
\end{equation}
Combining \eqref{24.03.31.13.00} and \eqref{24.03.31.13.11}, we obtain the desired inequality \eqref{24.03.31.13.12}.
This completes the proof of the theorem.
\end{proof}
\begin{rem}
Theorem \ref{max_prin} is stated as a \emph{nonpositivity} principle.
The \emph{nonnegativity} version is equivalent by a sign change.
Indeed, let $u$ solve
$$
\mathrm{d}u=(\phi(\Delta)u+f)\,\mathrm{d}t+\sum_{k=1}^{\infty} g^k\,\mathrm{d}w_t^k,
$$
and assume
$$
u(0,\cdot)\ge 0,\qquad 1_{\{u<0\}}\,f\ge 0,\qquad 1_{\{u<0\}}\,g^k=0.
$$
Set $v:=-u$. 
Then $v$ solves
$$
\mathrm{d}v=(\phi(\Delta)v-f)\,\mathrm{d}t-\sum_{k=1}^{\infty} g^k\,\mathrm{d}w_t^k,
$$
and satisfies
$$
v(0,\cdot)\le 0,\qquad 1_{\{v>0\}}(-f)= -1_{\{u<0\}}f\le 0,\qquad
1_{\{v>0\}}(-g^k)= -1_{\{u<0\}}g^k=0.
$$
Applying Theorem~\ref{max_prin} to $v$ yields $v\le 0$, hence $u\ge 0$. 
Thus, the nonpositivity and nonnegativity formulations are equivalent.
\end{rem}

\begin{rem}
    Theorem \ref{max_prin} says the \emph{nonpositivity} of the solution $u$.
    However, the \emph{nonpositivity principle} and the \emph{nonnegativity principle} are equivalent.
    Suppose that $u$ is a solution to
    $$
        \mathrm{d}u=(\phi(\Delta)u+f)\mathrm{d}t+\sum_{k=1}^{\infty}g^k\mathrm{d}w^k_t.
        $$
    Suppose also that
    $$
    u(0,\cdot)\geq 0,\quad 1_{u(t,\cdot)<0}f(t,\cdot)\geq0,\quad 1_{u(t,\cdot)<0}g^k(t,\cdot)=0.
    $$
    Then $v:=-u$ is a solution to
    $$
        \mathrm{d}v=(\phi(\Delta)v-f)\mathrm{d}t-\sum_{k=1}^{\infty}g^k\mathrm{d}w^k_t.
        $$
        Then
$$
v(0,\cdot)=-u(0,\cdot)\leq0,\quad 1_{v(t,\cdot)>0}(-f(t,\cdot))=-1_{u(t,\cdot)<0}f(t,\cdot)\leq0,
$$
and
$$
1_{v(t,\cdot)>0}(-g^k(t,\cdot))=-1_{u(t,\cdot)<0}g^k(t,\cdot)=0.
$$
By Theorem \ref{max_prin}, we have $-u=v\leq 0$, thus $u\geq0$.
\end{rem}

\begin{proof}[Proof of Theorem \ref{local}-$(ii)$]
    Consider $\eta, \rho \in C_c^{\infty}(\mathbb{R})$ to be nonnegative functions satisfying 
    $$
    \int_{\bR}\eta(x)\mathrm{d}x=1,\quad \rho(0)=1.
    $$
    Define $u_{0,n} := (u_0 * \eta_n) \rho_n$, where $\eta_n(x) := n^d \eta(n|x|)$ and $\rho_n(x) := \rho(|x|/n)$.
    Thus, $u_{0,n} \in L_2(\Omega \times \mathbb{R}^d) \cap U_{p,q}^{\phi,\gamma}$, and it follows that
    \begin{equation}
    \label{24.03.24.14.54}
        \lim_{n\to\infty}\|u_{0,n}-u_0\|_{U_{p,q}^{\phi,\gamma}}=0.
    \end{equation}
    According to Theorems \ref{23.07.04.14.29} and \ref{local}-($i$), there exists a unique solution $u_m^n \in \cH_{2,2}^{\phi,1}(\tau) \cap \cH_{p,q}^{\phi,\gamma}(\tau)$ for the equation
    $$
    \mathrm{d}u_m^n=(\phi(\Delta)u_m^n+\zeta F_m(u_m^n)+\vec{b}\cdot\nabla_x(B_m(u_m^n)))\mathrm{d}t+\sum_{k=1}^n\xi\varphi_m(u_m^n)(\pi\ast e_k)\mathrm{d}w_t^k,\quad t\in(0,\tau],
    $$
    with initial condition $u_m^n(0, \cdot) = u_{0,n}$.
    It is verifiable that
    $$
    f_m^n:=(\zeta F_m(u_m^n)+\vec{b}\cdot\nabla_x(B_m(u_m^n)))1_{\cpar0,\tau\cbrk}\in L_2(\Omega\times(0,\infty)\times\bR^d),
    $$
    and $f_m^n\leq 0$ on $\{(\omega,t,x):u_m^n(\omega,t,x)\leq0\}$.
    Similarly, 
    $$
    g_m^n:=\xi\varphi_m(u_m^n)\boldsymbol{\pi}_n1_{\cpar0,\tau\cbrk}\in L_2(\Omega\times(0,\infty)\times\bR^d;\ell_2),\quad \boldsymbol{\pi}_n:=(\pi\ast e_1,\cdots,\pi\ast e_n,0,\cdots),
    $$
    and $g_m^n=0$ on $\{(\omega,t,x):u_m^n(\omega,t,x)\leq0\}$.
    Applying Theorem \ref{max_prin}, we conclude $u_m^n \geq 0$ for all $t \in [0, \tau]$ almost surely.
    The final step of the proof is to establish that
    \begin{equation}
    \label{24.03.05.16.04}
        \lim_{n\to\infty}\|u_m^n-u_m\|_{\cH_{p,q}^{\phi,\gamma}(T)}=0.
    \end{equation}
    Here, $u_m$ is the solution in Theorem \ref{local}-$(i)$.
    According to Corollary \ref{23.07.04.17.12},
    \begin{align*}
        \|u_m^n-u_m\|_{\cH_{p,q}^{\phi,\gamma}(\tau\wedge t)}&\leq N\|u_{0,n}-u_0\|_{U_{p,q}^{\phi,\gamma}}+N\|u_m^n-u_m\|_{\bL_{p,q}(\tau\wedge t)}\\
        &\quad +N\|(\boldsymbol{\pi}-\boldsymbol{\pi}_n)u_m\|_{\bH_{p,q}^{\phi,\gamma-1}(\tau\wedge t,\ell_2)},
    \end{align*}
    where $N$ is a constant independent of $n$ and $t$.
    Employing the inequality \eqref{stochastic_gronwall} and Gr\"onwall's inequality, we derive
    \begin{align*}
        \|u_m^n-u_m\|_{\cH_{p,q}^{\phi,\gamma}(\tau)}&\leq N\|u_{0,n}-u_0\|_{U_{p,q}^{\phi,\gamma}}+N\|(\boldsymbol{\pi}-\boldsymbol{\pi}_n)u_m\|_{\bH_{p,q}^{\phi,\gamma-1}(\tau,\ell_2)}.
    \end{align*}
    With \eqref{24.03.04.14.29} in consideration,
    \begin{align*}
    \|(\boldsymbol{\pi}-\boldsymbol{\pi}_n)u_m\|_{\bH_{p,q}^{\phi,\gamma-1}(\tau,\ell_2)}^q\leq N\|u_m\|_{\bL_{p,q}(\tau)},
    \end{align*}
    leading to the conclusion via the dominated convergence theorem and \eqref{24.03.24.14.54} that \eqref{24.03.05.16.04} holds true.
    Thus, the theorem is proven.
\end{proof}

\subsection{Space-time H\"older type regularity of local solutions: Proof of Theorem \ref{local}-(iii)}
\label{24.03.22.14.23}
The proof of Theorem \ref{local}-(iii), which concerns the space–time H\"older-type regularity of local solutions, follows as an immediate corollary of the theorem stated below. We begin by recalling the definition of parabolic H\"older spaces adapted to our setting.

\begin{defn}[Parabolic H\"older space]
\label{25.08.11.21.09}
Let $\delta_0\in(0,1]$, $\gamma\in(0,\infty)$, $p\in[2,\infty)$, $q\in(2,\infty)$, and let $\phi$ be a Bernstein function satisfying Assumption \ref{24.03.14.11.58}~($\delta_0$).
Assume that
$$
\frac{1}{q}\leq\alpha<\beta\leq\frac{1}{2},\qquad \delta_0(\gamma-2\beta)-\frac{d}{p}>0.
$$
For an integer $L>\gamma-\frac{d}{p}$, define the $L$-th order difference in the parabolic direction $(\tau,h)\in\mathbb{R}\times\mathbb{R}^d$ by
$$
\mathcal{D}^{0}_{(\tau,h)}u:=u,\qquad
\mathcal{D}^{L}_{(\tau,h)}u(t,x):=\mathcal{D}^{1}_{(\tau,h)}\mathcal{D}^{L-1}_{(\tau,h)}u(t,x)=\sum_{k=0}^{L}(-1)^{L-k}\binom{L}{k}\,u(t+k\tau,x+kh).
$$
For $T>0$, set
$$
\mathbb{R}^d_{T,L,\tau}:=\Big\{(t,x)\in [0,T]\times\mathbb{R}^d:\ (t+L\tau,x)\in [0,T]\times\mathbb{R}^d\Big\}.
$$
We denote by $C_{t,x}^{\alpha-\frac{1}{q},\,\phi_{\gamma-2\beta}-\frac{d}{p}}([0,T]\times\mathbb{R}^d)$ the set of continuous functions $u$ on $[0,T]\times\mathbb{R}^d$ such that the norm
$$
|u|_{C_{t,x}^{\alpha-\frac{1}{q},\,\phi_{\gamma-2\beta}-\frac{d}{p}}([0,T]\times\mathbb{R}^d)}
:=|u|_{C([0,T]\times\mathbb{R}^d)}+\sup_{(\tau,h)\neq(0,0)}
\frac{\|\mathcal{D}^{L}_{(\tau,h)}u\|_{L_\infty(\mathbb{R}^d_{T,L,\tau})}}
{|\tau|^{\alpha-\frac{1}{q}}+\phi(|h|^{-2})^{-\frac{\gamma}{2}+\beta}\,|h|^{-\frac{d}{p}}}
$$
is finite.
\end{defn}

\begin{prop}
\label{25.08.11.21.08}
The space $C_{t,x}^{\alpha - \frac{1}{q},\, \phi_{\gamma - 2\beta} - \frac{d}{p}}([0,T] \times \mathbb{R}^d)$ can be characterized as follows:
$$
    C_{t,x}^{\alpha - \frac{1}{q},\, \phi_{\gamma - 2\beta} - \frac{d}{p}}([0,T] \times \mathbb{R}^d)
    = C(\mathbb{R}^d; C^{\alpha - \frac{1}{q}}([0,T])) 
      \cap C([0,T]; C^{\phi_{\gamma - 2\beta} - \frac{d}{p}}(\mathbb{R}^d)),
$$
where
$$
\begin{aligned}
    &|u|_{C(\mathbb{R}^d; C^{\alpha - \frac{1}{q}}([0,T]))} 
    := |u|_{C([0,T] \times \mathbb{R}^d)} 
    + \sup_{x \in \mathbb{R}^d} \sup_{t,s \in [0,T]} 
      \frac{|u(t,x) - u(s,x)|}{|t-s|^{\alpha - \frac{1}{q}}}, \\
    &|u|_{C([0,T]; C^{\phi_{\gamma - 2\beta} - \frac{d}{p}}(\mathbb{R}^d))} 
    := |u|_{C([0,T] \times \mathbb{R}^d)} 
    + \sup_{t \in [0,T]} \sup_{\substack{x \in \mathbb{R}^d \\ |h| \leq1}} 
      \frac{|\mathcal{D}^L_{(0,h)} u(t,x)|}{\phi(|h|^{-2})^{-\frac{\gamma}{2} + \beta} \, |h|^{-\frac{d}{p}}}.
\end{aligned}
$$
\end{prop}

\begin{proof}
First, note that
$$
\begin{aligned}
    |\mathcal{D}^L_{(\tau,h)} u(t,x)|
    &\leq|\mathcal{D}^L_{(\tau,h)} u(t,x) - \mathcal{D}^L_{(0,h)} u(t,x)|
        + |\mathcal{D}^L_{(0,h)} u(t,x)| \\
    &\leq N(L) \sum_{j=1}^L |u(t + j\tau, x + jh) - u(t, x + jh)|
        + |\mathcal{D}^L_{(0,h)} u(t,x)|.
\end{aligned}
$$
Hence
$$
\begin{aligned}
    &\frac{\|\mathcal{D}^L_{(\tau,h)} u\|_{L_\infty(\mathbb{R}^d_{T,L,\tau})}}
    {|\tau|^{\alpha - \frac{1}{q}} + \phi(|h|^{-2})^{-\frac{\gamma}{2} + \beta} |h|^{-\frac{d}{p}}} \\
    &\quad \leq N(\alpha, L, q) 
      \sup_{x \in \mathbb{R}^d} \sup_{t,s \in [0,T]} 
      \frac{|u(t,x) - u(s,x)|}{|t-s|^{\alpha - \frac{1}{q}}}
      + \sup_{t \in [0,T]} \sup_{\substack{x \in \mathbb{R}^d \\ |h| \leq 1}}
      \frac{|\mathcal{D}^L_{(0,h)} u(t,x)|}{\phi(|h|^{-2})^{-\frac{\gamma}{2} + \beta} |h|^{-\frac{d}{p}}}.
\end{aligned}
$$
This shows that
$$
    C(\mathbb{R}^d; C^{\alpha - \frac{1}{q}}([0,T])) 
    \cap C([0,T]; C^{\phi_{\gamma - 2\beta} - \frac{d}{p}}(\mathbb{R}^d))
    \subseteq C_{t,x}^{\alpha - \frac{1}{q},\, \phi_{\gamma - 2\beta} - \frac{d}{p}}([0,T] \times \mathbb{R}^d).
$$

Conversely, note that
$$
\frac{|\mathcal{D}^L_{(\tau,0)} u(t,x)|}{|\tau|^{\alpha - \frac{1}{q}}}
+ \frac{|\mathcal{D}^L_{(0,h)} u(t,x)|}{\phi(|h|^{-2})^{-\frac{\gamma}{2} + \beta} |h|^{-\frac{d}{p}}}
\leq \sup_{(\tau,h) \neq (0,0)}
\frac{\|\mathcal{D}^L_{(\tau,h)} u\|_{L_\infty(\mathbb{R}^d_{T,L,\tau})}}
{|\tau|^{\alpha - \frac{1}{q}} + \phi(|h|^{-2})^{-\frac{\gamma}{2} + \beta} |h|^{-\frac{d}{p}}}.
$$
It remains to prove
\begin{equation}\label{25.08.11.17.03}
\sup_{t,s\in[0,T]} \frac{|u(t,x)-u(s,x)|}{|t-s|^{\alpha-\frac1q}}
\leq N_1
\sup_{t\in[0,T]}|u(t,x)|+N_2\sup_{t>0}\frac{\omega_L(u(\cdot,x),t)}{t^{\alpha-\frac{1}{q}}},
\end{equation}
where $N_1=N_1(\alpha,L,q,T)$, $N_2=N_2(\alpha,L,q)$
$$
\omega_L(u(\cdot,x),t):=\sup_{0<|\tau|\leq t}\big\|\mathcal{D}^{L}_{(\tau,0)}u(\cdot,x)\big\|_{L_\infty(\mathrm{T}_{L,\tau})},
\qquad
\mathrm{T}_{L,\tau}:=\big\{s\in[0,T]:\, s+L\tau\in[0,T]\big\}.
$$
Since \eqref{25.08.11.17.03} is pointwise in $x$, it suffices to prove it for scalar functions on $[0,T]$. 
Thus, let $f:[0,T]\to\mathbb{R}$ and set
$$
\mathcal{D}^L_{\tau}f(t):=\sum_{k=0}^{L}(-1)^{L-k}\binom{L}{k}f(t+k\tau),\qquad
\omega_L(f,t):=\sup_{0<|\tau|\leq t}\big\|\mathcal{D}^{L}_{\tau}f\big\|_{L_\infty(\mathrm{T}_{L,\tau})},
$$
and
$$
[f]_{C^{\beta}_L([0,T])}:=\sup_{t>0}\frac{\omega_L(f,t)}{t^{\beta}}.
$$

\medskip\noindent
\textbf{Claim 1.} Let $a_j:=\omega_L(f,2^{-j}t)$ and $b_{j}:=\omega_{L+1}\!\left(f,2^{-j}t\right)$. Then,
\begin{equation}
\label{25.08.26.09.29}
a_0\leq 2^L(1+2^{-LJ})\,a_J+L2^{L-1}\sum_{j=0}^{J}(1+2^{-Lj})\,b_{j}, \quad \forall\, J\in \mathbb{N}.
\end{equation}
\begin{proof}[Proof of Claim 1]
Let $I$ be the identity and $T_\tau f(t):=f(t+\tau)$. Then
$$
\mathcal{D}_{\tau}=(I+T_{\tau/2})\mathcal{D}_{\tau/2},
$$
hence
\begin{align*}
\mathcal{D}^{L}_{\tau}-\mathcal{D}^{L}_{\tau/2}
&=(I+T_{\tau/2})^{L}\mathcal{D}^{L}_{\tau/2}-\mathcal{D}^{L}_{\tau/2}
=\sum_{k=1}^{L}\binom{L}{k}(T_{\tau/2}^{k}-2^{-L}I)\,\mathcal{D}^{L}_{\tau/2}\\
&=\sum_{k=1}^{L}\binom{L}{k}\sum_{r=0}^{k-1}2^{-\frac{L(k-1-r)}{k}}T_{\tau/2}^{\,r}\,\mathcal{D}^{L+1}_{\tau/2}.
\end{align*}
Therefore,
$$
\big\|\mathcal{D}^{L}_{\tau}f-\mathcal{D}^{L}_{\tau/2}f\big\|_{L_\infty(\mathrm{T}_{L,\tau})}
\leq L2^{L-1}\,\big\|\mathcal{D}^{L+1}_{\tau/2}f\big\|_{L_\infty(\mathrm{T}_{L+1,\tau/2})}.
$$
Then for integers $0\leq k<J$, define
$$
S_{k,J}:=\sum_{j=k}^{J-1}2^{-Lj}\Big(\mathcal{D}^{L}_{\tau/2^{j}}-\mathcal{D}^{L}_{\tau/2^{j+1}}\Big),
$$
which telescopes to
$$
S_{k,J}
=2^{-Lk}\mathcal{D}^{L}_{\tau/2^{k}}
-(2^{L}-1)\sum_{j=k+1}^{J-1}2^{-Lj}\mathcal{D}^{L}_{\tau/2^{j}}
-2^{-L(J-1)}\mathcal{D}^{L}_{\tau/2^{J}}.
$$
Taking sup–norms on the appropriate domains yields
\begin{equation*}
\begin{aligned}
2^{-Lk}\big\|\mathcal{D}^{L}_{\tau/2^{k}}f\big\|_{L_\infty(\mathrm{T}_{L,\tau/2^{k}})}
&\leq (2^{L}-1)\sum_{j=k+1}^{J-1}2^{-Lj}\big\|\mathcal{D}^{L}_{\tau/2^{j}}f\big\|_{L_\infty(\mathrm{T}_{L,\tau/2^{j}})}\\
&\quad +2^{-L(J-1)}\big\|\mathcal{D}^{L}_{\tau/2^{J}}f\big\|_{L_\infty(\mathrm{T}_{L,\tau/2^{J}})}\\
&\quad +L2^{L-1}\sum_{j=k}^{J-1}2^{-Lj}\big\|\mathcal{D}^{L+1}_{\tau/2^{j+1}}f\big\|_{L_\infty(\mathrm{T}_{L+1,\tau/2^{j+1}})}.
\end{aligned}
\end{equation*}
Hence, we obtain the discrete Volterra inequality: for $k=0,1,2,\cdots,J-2$,
\begin{equation}\label{25.08.22.16.37}
a_k\leq 2^{-L(J-1-k)}a_J+(2^{L}-1)\sum_{j=k+1}^{J-1}2^{-L(j-k)}a_j+L2^{L-1}\sum_{j=k}^{J-1}2^{-L(j-k)}b_{j+1}.
\end{equation}
By summing over $k=0,1,\cdots,J-2$,
\begin{align*}
    \sum_{k=0}^{J-2}a_k&\leq a_J\sum_{k=0}^{J-2}2^{-L(J-1-k)}+(2^{L}-1)\sum_{k=0}^{J-2}\sum_{j=k+1}^{J-1}2^{-L(j-k)}a_j+L2^{L-1}\sum_{k=0}^{J-2}\sum_{j=k}^{J-1}2^{-L(j-k)}b_{j+1}\\
    &\leq \frac{2^{L}a_J}{2^{L}-1}+\sum_{j=1}^{J-1}(1-2^{-Lj})a_j+\frac{L2^{L-1}}{2^{L}-1}\sum_{j=0}^{J-1}b_{j+1}.
\end{align*}
Since $a_{k+1}\leq a_k$, we have
$$
\sum_{k=1}^{J-1}a_k\leq\sum_{k=0}^{J-2}a_k\leq \frac{2^{L}a_J}{2^{L}-1}+\sum_{j=1}^{J-1}(1-2^{-Lj})a_j+\frac{L2^{L-1}}{2^{L}-1}\sum_{j=0}^{J-1}b_{j+1},
$$
thus
\begin{equation}
\label{25.08.26.09.27}
\sum_{j=1}^{J-1}2^{-Lj}a_j\leq \frac{2^{L}a_J}{2^{L}-1}+\frac{L2^{L-1}}{2^{L}-1}\sum_{j=0}^{J-1}b_{j+1}
\end{equation}
Putting $k=0$ in \eqref{25.08.22.16.37} and using \eqref{25.08.26.09.27} yields \eqref{25.08.26.09.29}.
\end{proof}

\medskip\noindent
\textbf{Claim 2.} For all $t\in(0,T]$,
\begin{equation}
\label{25.08.26.10.40}
    \omega_L(f,t)\leq N(L)t^{L}\|f\|_{L_{\infty}([0,T])}+N(\alpha,L,q)t^{\alpha-\frac{1}{q}}[f]_{C_{L+1}^{\alpha-\frac{1}{q}}([0,T])}.
\end{equation}

\begin{proof}[Proof of Claim 2]
Note that for any $J\in \mathbb{N}$,
$$
1= \frac{1}{\log 2}\int_{2^{-J}t}^{2^{-(J-1)}t} \frac{1}{\lambda} \,\mathrm{d}\lambda,
\qquad
2^{-LJ}=\frac{t^L}{L}\int_{2^Jt}^{\infty}\lambda^{-L-1}\,\mathrm{d}\lambda.
$$
Since $\lambda\mapsto\omega_L(f,\lambda)$ is nondecreasing, we have
$$
(1+2^{-LJ})a_J\leq\frac{1}{\log2}\int_{0}^{2^{-(J-1)}t}\frac{\omega_L(f,\lambda)}{\lambda}\mathrm{d}\lambda+\frac{t^L}{L}\int_{2^Jt}^{\infty}\frac{\omega_L(f,\lambda)}{\lambda^{L+1}}\mathrm{d}\lambda.
$$
Using 
$$
\omega_L(f,\lambda)\leq \min\left(2^L\|f\|_{L_{\infty}([0,T])},\lambda^{\alpha-\frac{1}{q}}[f]_{C_L^{\alpha-\frac{1}{q}}([0,T])}\right),
$$
we obtain, for all $J\in\mathbb{N}$,
\begin{align*}
&(1+2^{-LJ})a_J
\\
&\leq\frac{[f]_{C_L^{\alpha-\frac{1}{q}}([0,T])}}{\log2}\int_{0}^{2^{-(J-1)}t}\lambda^{\alpha-\frac{1}{q}-1}\mathrm{d}\lambda
+\frac{2^Lt^L}{L}\|f\|_{L_{\infty}([0,T])}\int_{2^{J}t}^{\infty}\lambda^{-L-1}\mathrm{d}\lambda\\
&\leq N(\alpha,q)t^{\alpha-\frac{1}{q}}2^{-J(\alpha-\frac{1}{q})}[f]_{C_L^{\alpha-\frac{1}{q}}([0,T])}+N(L)t^L\|f\|_{L_{\infty}([0,T])}\int_{2^{J}t}^{\infty}\lambda^{-L-1}\mathrm{d}\lambda.
\end{align*}
Taking the limit superior as $J\to\infty$ gives
$$
\limsup_{J\to\infty}(1+2^{-LJ})a_J\leq N(L)t^L\|f\|_{L_{\infty}([0,T])}\limsup_{J\to\infty}\int_{2^{J}t}^{\infty}\lambda^{-L-1}\mathrm{d}\lambda.
$$
Since
$$
\limsup_{J\to\infty}\int_{2^{J}t}^{\infty}\lambda^{-L-1}\mathrm{d}\lambda\leq \int_{1}^{\infty}\lambda^{-L-1}\mathrm{d}\lambda=N(L),
$$
we have
\begin{equation}
\label{25.08.26.10.20}
\limsup_{J\to\infty}(1+2^{-LJ})a_J\leq N(L)t^L\|f\|_{L_{\infty}([0,T])}.
\end{equation}
Next, since $\lambda \mapsto\omega_{L+1}(f, \lambda)$ is nondecreasing,
$$
b_{j+1}
=\frac1{\log 2}\int_{2^{-(j+1)}t}^{2^{-j}t}\frac{\omega_{L+1}\left(f,2^{-(j+1)}t\right)}{\lambda}\,\mathrm{d}\lambda
\leq \frac1{\log 2}\int_{2^{-(j+1)}t}^{2^{-j}t}\frac{\omega_{L+1}(f,\lambda)}{\lambda}\,\mathrm{d}\lambda,
$$
and thus, for any $J\in\mathbb{N}$,
\begin{align*}
\sum_{j=0}^{J-1}(1+2^{-Lj})b_{j+1}&\leq\frac{1}{\log2}\sum_{j=0}^{J-1}(1+2^{-Lj})\int_{2^{-(j+1)}t}^{2^{-j}t}\frac{\omega_{L+1}(f,\lambda)}{\lambda}\mathrm{d}\lambda \\
&\leq \frac{1}{\log2}\sum_{j=0}^{J-1}\int_{2^{-(j+1)}t}^{2^{-j}t}\left(1+\left(\frac{2\lambda}{t}\right)^{L}\right)\frac{\omega_{L+1}(f,\lambda)}{\lambda}\mathrm{d}\lambda\\
&=\frac{1}{\log2}\int_{2^{-J}t}^{t}\left(1+\left(\frac{2\lambda}{t}\right)^{L}\right)\frac{\omega_{L+1}(f,\lambda)}{\lambda}\mathrm{d}\lambda\\
&\leq \frac{(1+2^L)}{\log2}\int_{2^{-J}t}^{t}\frac{\omega_{L+1}(f,\lambda)}{\lambda}\mathrm{d}\lambda.
\end{align*}
Hence,
\begin{equation}
\label{25.08.26.10.22}
\begin{aligned}
\limsup_{J\to\infty}\sum_{j=0}^{J-1}(1+2^{-Lj})b_{j+1}&\leq \frac{(1+2^L)}{\log2}\int_{0}^{t}\frac{\omega_{L+1}(f,\lambda)}{\lambda}\mathrm{d}\lambda\\
&\leq [f]_{C_{L+1}^{\alpha-\frac{1}{q}}([0,T])}\frac{L2^{L-1}}{\log 2}\int_{0}^t\lambda^{\alpha-\frac{1}{q}-1}\mathrm{d}\lambda\\
&= N(\alpha,L,q)t^{\alpha-\frac{1}{q}}[f]_{C_{L+1}^{\alpha-\frac{1}{q}}([0,T])}.
\end{aligned}
\end{equation}
Starting from \eqref{25.08.26.09.29} and using \eqref{25.08.26.10.20} and \eqref{25.08.26.10.22}, we conclude
\begin{align*}
\omega_{L}(f,t) =a_{0} &\leq \limsup_{J\to\infty} \left( 2^{L}(1+2^{-LJ})a_{J} + L2^{L-1}\sum_{j=0}^{J}(1+2^{-Lj})b_{j}  \right)
\\
& \leq N(L)t^{L}\|f\|_{L_{\infty}([0,T])} + N(\alpha,L,q)t^{\alpha-\frac{1}{q}} [f]_{C^{\alpha-\frac{1}{q}}_{L+1}([0,T])},
\end{align*}
which is \eqref{25.08.26.10.40}.
\end{proof}

Now, we prove \eqref{25.08.11.17.03}.
Dividing both sides of \eqref{25.08.26.10.40} by $t^{\alpha-\frac{1}{q}}$ and taking the supremum over $t\in(0,T]$, we obtain
\begin{align}\label{eqn 08.26.15:06}
[f]_{C^{\alpha-\frac1q}_L([0,T])}
\leq N(L)\,T^{L-\alpha+\frac1q}\|f\|_{L_\infty([0,T])}+
N(\alpha,L,q)\,[f]_{C^{\alpha-\frac1q}_{L+1}([0,T])}.
\end{align}
Since
\begin{align*}
\sup_{t,s \in [0,T]} \frac{ |f(t)-f(s)| }{|t-s|^{\alpha-\frac{1}{q}}} \leq \sup_{t>0} \frac{\omega_{1}(f,t)}{t^{\alpha-\frac{1}{q}}} =: [f]_{C^{\alpha-\frac{1}{q}}_{1}([0,T])},
\end{align*}
\eqref{eqn 08.26.15:06} yields
\begin{align*}
\sup_{t,s \in [0,T]} \frac{ |f(t)-f(s)| }{|t-s|^{\alpha-\frac{1}{q}}} &\leq [f]_{C^{\alpha-\frac{1}{q}}_{1}([0,T])} \\
&\leq N(L) [f]_{C^{\alpha-\frac{1}{q}}_{2}([0,T])} + N(\alpha,L,q)\|f\|_{L_{\infty}([0,T])} 
\\
& \leq \cdots \leq N_1\|f\|_{L_{\infty}([0,T])}+N_2 [f]_{C^{\alpha-\frac{1}{q}}_{L}([0,T])} .
\end{align*}
Finally, applying this with $f(\cdot)=u(\cdot,x)$ gives \eqref{25.08.11.17.03}.
This completes the proof.
\end{proof}
\begin{rem}
The choice of $L$ in Definition \ref{25.08.11.21.09} is immaterial for the definition of 
$$
C_{t,x}^{\alpha - \frac{1}{q},\, \phi_{\gamma - 2\beta} - \frac{d}{p}}([0,T] \times \mathbb{R}^d).
$$  
Indeed, by Proposition \ref{25.08.11.21.08}, the parameter $L$ appears only in the definition of 
$C^{\phi_{\gamma - 2\beta} - \frac{d}{p}}(\mathbb{R}^d)$.  
Moreover, by \cite[Theorem 1.5]{CLSW2023},
$$
    |u|_{C^{\phi_{\gamma - 2\beta} - \frac{d}{p}}(\mathbb{R}^d)}
    \simeq \|u\|_{B_{\infty,\infty}^{\phi_{\gamma,\beta,p,d}}(\mathbb{R}^d)},
$$
where $\phi_{\gamma,\beta,p,d}(\lambda) := \phi(\lambda^2)^{\frac{\gamma}{2} - \beta} \lambda^{-\frac{d}{p}}$.  
It is straightforward to verify that the definition of  
$$
B_{\infty,\infty}^{\phi_{\gamma,\beta,p,d}}(\mathbb{R}^d)
$$
does not depend on the choice of $L$.
\end{rem}

\begin{thm}
\label{23.09.05.17.43}
Let $\delta_0\in(0,1]$, $\gamma\in(0,\infty)$, $p\in[2,\infty)$, $q\in(2,\infty)$, and suppose Assumption \ref{24.03.14.11.58}~($\delta_0$) holds.
If
$$
\frac{1}{q}\leq\alpha<\beta\leq\frac{1}{2},\qquad \delta_0(\gamma-2\beta)-\frac{d}{p}>0,
$$
then the space $\cH_{p,q}^{\phi,\gamma}(T)$ is continuously embedded into
$L_q\left(\Omega;\, C_{t,x}^{\alpha-\frac{1}{q},\,\phi_{\gamma-2\beta}-\frac{d}{p}}([0,T]\times\mathbb{R}^d)\right)$.
Equivalently, there exists a constant $N=N(d,N_0,N_1,N_2,p,q,T)$ such that for all $u\in\cH_{p,q}^{\phi,\gamma}(T)$,
$$
\bE\left[|u|_{C_{t,x}^{\alpha-\frac{1}{q},\,\phi_{\gamma-2\beta}-\frac{d}{p}}([0,T]\times\mathbb{R}^d)}^{\,q}\right]
\leq N\,\|u\|_{\cH_{p,q}^{\phi,\gamma}(T)}^{\,q}.
$$
\end{thm}
\begin{proof}
Utilizing \cite[Theorem 5.3 and Remark 5.4]{CLSW2023}, for the condition $\delta_0\gamma - d/p > 0$, we obtain
\begin{equation} \label{holder embedding}
\begin{aligned}
\sum_{i=0}^{n}|D^iu|_{C(\bR^d)}+|D^nu|_{C^{\nu}(\bR^d)}&\leq N\left(| u |_{C(\bR^d)} + \sup_{|h|\leq1}\left(\frac{|\cD_h^Lu|_{C(\bR^d)}}{\phi(|h|^{-2})^{-\gamma/2}|h|^{-d/p}}\right)\right) \\
&\leq N \| u \|_{H_{p}^{\phi,\gamma}(\bR^d)},
\end{aligned}
\end{equation}
where $\mathcal{D}^L_hu(x) := \mathcal{D}_h(\mathcal{D}^{L-1}_hu)(x)$, $\mathcal{D}_hu(x) := u(x+h) - u(x)$, $L$ is an integer greater than $\gamma - d/p$, and $\delta_0\gamma - d/p = n + \nu$ for $n\in\mathbb{N}\cup\{0\}$ and $\nu \in (0,1]$.
By applying \eqref{holder embedding} and considering the continuous embedding given by Theorem \ref{prop}, \textit{i.e.},
$$
\cH_{p,q}^{\phi,\gamma}(T)\hookrightarrow L_q(\Omega; C^{\alpha-1/q}([0,T];H_{p}^{\phi,\gamma-2\beta}(\bR^d))),
$$
it follows that
    \begin{align*}           \|u\|_{\cH_{p,q}^{\phi,\gamma}(T)}^q&\geq N\bE\left[\left(\sup_{0\leq t\leq T}\|u(t,\cdot)\|_{H_p^{\phi,\gamma-2\beta}(\bR^d)}+\sup_{0\leq s<t\leq T}\frac{\|u(t,\cdot)-u(s,\cdot)\|_{H_p^{\phi,\gamma-2\beta}(\bR^d)}}{(t-s)^{\alpha-1/q}}\right)^q\right]\\
    &\geq N\bE\left[\left(\sup_{|h|\leq 1}\frac{|\cD_h^Lu|_{C([0,T]\times\bR^d)}}{\phi(|h|^{-2})^{-\frac{\gamma}{2}+\beta}|h|^{-d/p}}+\sup_{0\leq s<t\leq T}\frac{|u(t,\cdot)-u(s,\cdot)|_{C(\bR^d)}}{(t-s)^{\alpha-1/q}}\right)^q\right]\\
    &\quad +N\mathbb{E}\left[\left(\sup_{(t,x)\in[0,T]\times\bR^d}|u(t,x)|\right)^q\right].
    \end{align*}
    Proposition \ref{25.08.11.21.08} leads us to conclude that
    $$
    \bE\left[|u|_{C_{t,x}^{\alpha-\frac{1}{q},\phi_{\gamma-2\beta}-\frac{d}{p}}([0,T]\times\bR^d)}^q\right]\leq N\|u\|_{\cH_{p,q}^{\phi,\gamma}(T)}^q.
    $$
    The theorem is proved.
\end{proof}

The remaining task in this subsection is to establish the continuous embedding
$$
\cH_{p,q}^{\phi,\gamma}(T)\hookrightarrow L_q(\Omega; C^{\alpha-1/q}([0,T];H_{p}^{\phi,\gamma-2\beta}(\bR^d))),
$$
which plays a crucial role in the proof of Theorem \ref{23.09.05.17.43}.
\begin{thm} \label{prop}
Given $\tau\leq T$, $p \in [2, \infty)$, $q \in (2, \infty)$, and $\gamma \in \mathbb{R}$, and for
$$
\frac{1}{q}\leq\alpha<\beta\leq\frac{1}{2},
$$
the space $\cH_{p,q}^{\phi,\gamma}(\tau)$ is continuously embedded into $L_q(\Omega; C^{\alpha-1/q}([0,\tau]; H_{p}^{\phi,\gamma-2\beta}(\mathbb{R}^d)))$.
That is, there exists a constant $N = N(d,N_0,N_1,N_2,p, q, T)$ such that
\begin{equation*}
\bE\left[\left(\sup_{t\leq \tau}\|u(t,\cdot)\|_{H_p^{\phi,\gamma-2\beta}(\bR^d)}+\sup_{0\leq s<t\leq \tau}\frac{\|u(t)-u(s)\|_{H_p^{\phi,\gamma-2\beta}(\bR^d)}}{(t-s)^{\alpha-1/q}}\right)^q\right]\leq N\|u\|_{\cH_{p,q}^{\phi,\gamma}(\tau)}^q.
\end{equation*}
\end{thm}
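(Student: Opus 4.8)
The plan is to reduce the statement to a single moment estimate on the increments of $u$ in $H_p^{\phi,\gamma-2\beta}$ and then to upgrade it to a bound on the full $C^{\alpha-1/q}$-norm via a Banach-space-valued Garsia--Rodemich--Rumsey (GRR) inequality. Since $(1-\phi(\Delta))^{(\gamma-2\beta)/2}$ is simultaneously an isometry $H_p^{\phi,\gamma}\to H_p^{\phi,2\beta}$, $H_p^{\phi,\gamma-2}\to H_p^{\phi,2\beta-2}$, $H_p^{\phi,\gamma-1}\to H_p^{\phi,2\beta-1}$, $H_p^{\phi,\gamma-2\beta}\to L_p(\bR^d)$ and commutes with $\phi(\Delta)$, $\mathbb{D}$, $\mathbb{S}$ and It\^o integration, one may assume $\gamma=2\beta$; I keep $\gamma$ in the notation for readability. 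Writing $\tilde f:=\mathbb{D}u-\phi(\Delta)u\in\bH_{p,q}^{\phi,\gamma-2}(\tau)$ and $g:=\mathbb{S}u\in\bH_{p,q}^{\phi,\gamma-1}(\tau,\ell_2)$, the function $u$ solves the \emph{linear} equation $\mathrm{d}u=(\phi(\Delta)u+\tilde f)\,\mathrm{d}t+\sum_k g^k\,\mathrm{d}w^k_t$ with $u(0)\in U_{p,q}^{\phi,\gamma}$, so by uniqueness in Theorem~\ref{23.07.04.14.29} together with the solution operators of Theorem~\ref{23.09.18.17.09} and Lemma~\ref{23.09.19.12.43}, $u$ coincides with the variation-of-constants formula
$$
u(t)=T_tu(0)+\int_0^tT_{t-r}\tilde f(r)\,\mathrm{d}r+\sum_k\int_0^tT_{t-r}g^k(r)\,\mathrm{d}w^k_r=:z(t)+w(t)+v(t),
$$
where $(T_t)_{t\ge0}$ is the bounded analytic $C_0$-semigroup generated by $\phi(\Delta)$ (Lemma~\ref{23.09.19.12.41}); moreover $z\in\bH_{p,q}^{\phi,\gamma}(\tau)$ with norm $\lesssim\|u(0)\|_{U_{p,q}^{\phi,\gamma}}$ (the trace-space estimate for analytic semigroups) and $v\in\bH_{p,q}^{\phi,\gamma}(\tau)$ with norm $\lesssim\|g\|_{\bH_{p,q}^{\phi,\gamma-1}(\tau,\ell_2)}$ (Theorem~\ref{23.09.18.17.09}$(iii)$), hence $w=u-z-v\in\bH_{p,q}^{\phi,\gamma}(\tau)$ as well.

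The core of the argument is to show that there exist $\Theta,\Theta_2\in L_1([0,\tau])$ with $\int_0^\tau(\Theta+\Theta_2)\le N\|u\|_{\cH_{p,q}^{\phi,\gamma}(\tau)}^q$ such that, working with the continuous $H_p^{\phi,\gamma-2}$-valued modification of $u$, for all $0\le s<t\le\tau$
$$
\mathbb{E}\,\|u(t)-u(s)\|_{H_p^{\phi,\gamma-2\beta}}^q\le N|t-s|^{\beta q-1}\int_s^t\Theta(r)\,\mathrm{d}r+N|t-s|^{\beta q}\,\Theta_2(s).
$$
I would establish this for $z$, $w$, $v$ separately, in each case splitting the increment as a ``new-mass'' term plus an ``old-mass'' term: $v(t)-v(s)=\sum_k\int_s^tT_{t-r}g^k\,\mathrm{d}w^k_r+(T_{t-s}-I)v(s)$, and similarly for $w$, while $z(t)-z(s)=(T_{t-s}-I)z(s)$ has no new-mass term. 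The old-mass terms are handled by the analytic-semigroup bound $\|(T_h-I)(1-\phi(\Delta))^{-\beta}\|_{L_p\to L_p}\lesssim h^{\beta}$ (valid for $0<\beta<1$, proved as in Lemma~\ref{23.09.19.12.41}) and $z(s),w(s),v(s)\in H_p^{\phi,\gamma}$ for a.e.\ $s$, yielding the $|t-s|^{\beta q}\Theta_2$ term with $\Theta_2(s)=\mathbb{E}\|z(s)\|_{H_p^{\phi,\gamma}}^q+\mathbb{E}\|w(s)\|_{H_p^{\phi,\gamma}}^q+\mathbb{E}\|v(s)\|_{H_p^{\phi,\gamma}}^q$. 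For the new-mass terms one uses the smoothing estimate $\|(1-\phi(\Delta))^{\kappa}T_r\|_{L_p\to L_p}\lesssim r^{-\kappa}$ with $\kappa=1-\beta$: for $w$ this gives $\big\|\int_s^tT_{t-r}\tilde f(r)\,\mathrm{d}r\big\|_{H_p^{\phi,\gamma-2\beta}}\lesssim\int_s^t(t-r)^{-(1-\beta)}\|\tilde f(r)\|_{H_p^{\phi,\gamma-2}}\,\mathrm{d}r$, and H\"older's inequality in $r$ with exponents $q',q$ (the singular kernel is $L_{q'}$-integrable \emph{exactly} because $\beta>1/q$, which follows from $1/q\le\alpha<\beta$; the endpoint $\beta=1/2$ uses $q>2$) yields the $|t-s|^{\beta q-1}\int_s^t\Theta$ term; for $v$ one invokes the Burkholder--Davis--Gundy inequality (here $p\ge2$ is used), Minkowski's integral inequality, the same semigroup bound, and H\"older in $r$ with exponents $\tfrac{q}{q-2},\tfrac q2$, producing the same contribution. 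The pointwise bound on $\sup_{t\le\tau}\|u(t)\|_{H_p^{\phi,\gamma-2\beta}}$ follows by combining $\|z(t)\|_{H_p^{\phi,\gamma-2\beta}}\le\|u(0)\|_{H_p^{\phi,\gamma-2\beta}}\lesssim\|u(0)\|_{U_{p,q}^{\phi,\gamma}}$ (contractivity of $T_t$ and the embedding of the trace space $(H_p^{\phi,\gamma},H_p^{\phi,\gamma-2})_{1/q,q}$ underlying $U_{p,q}^{\phi,\gamma}$ into $H_p^{\phi,\gamma-2\beta}$, valid since $1/q<\beta$) with $w(0)=v(0)=0$ and the H\"older seminorm below.

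Finally I would feed the increment estimate into the GRR inequality on $[0,\tau]$ with $\Psi(x)=x^q$ and $p(u)=u^{\alpha+1/q}$ (for $\alpha>1/q$; when $\alpha=1/q$, where $C^{\alpha-1/q}$ merely means continuity, pick instead any exponent in $(2/q,\beta+1/q)$, nonempty since $\beta>1/q$). This produces $|u|_{C^{\alpha-1/q}([0,\tau];H_p^{\phi,\gamma-2\beta})}\le C_qB^{1/q}$ with $\mathbb{E}[B]=\int_0^\tau\int_0^\tau|t-s|^{-(\alpha q+1)}\mathbb{E}\|u(t)-u(s)\|_{H_p^{\phi,\gamma-2\beta}}^q\,\mathrm{d}s\,\mathrm{d}t$; substituting the increment bound and exchanging the order of integration (integrating first in $(s,t)$ over $\{s<r<t\}$ for the first term), one is left with the double integrals $\int\!\int_{s<r<t}|t-s|^{(\beta-\alpha)q-2}$ and $\int|t-s|^{(\beta-\alpha)q-1}$, both finite \emph{precisely} because $\alpha<\beta$, whence $\mathbb{E}[B]\le N\|u\|_{\cH_{p,q}^{\phi,\gamma}(\tau)}^q$ and the theorem follows. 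The main obstacle is the space-time regularity of the stochastic convolution $v$: one must extract the sharp power $|t-s|^{\beta q-1}$ from the Burkholder--Davis--Gundy estimate, and it is this sharp power---made possible by the half-order parabolic time scaling (which forces $q>2$, cf.\ the remark following Theorem~\ref{23.09.18.17.09}), the kernel exponent $-(1-\beta)$, and the restriction $1/q<\beta\le1/2$---that allows the embedding to hold on the full range $\alpha<\beta$ rather than only $\alpha<\beta/2$.
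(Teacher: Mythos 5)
Your proposal is correct and follows essentially the same route as the paper: reduce to $\gamma=2\beta$, represent $u$ by the mild formula $T_tu_0+\cT_1f+\cT_2g$, split increments into a new-mass term (handled by the smoothing bound $\|(1-\phi(\Delta))^{1-\beta}T_\lambda\|\lesssim\lambda^{-(1-\beta)}$, H\"older with exponents forced by $\beta>1/q$, and Burkholder--Davis--Gundy for the stochastic convolution) and an old-mass term $(T_{t-s}-I)(\cdot)$ (handled by $\|(T_h-1)h\|_{L_p}\lesssim h^\beta\|h\|_{H_p^{\phi,2\beta}}$ together with the maximal-regularity bounds of Theorem \ref{23.09.18.17.09}), and then apply the Garsia--Rodemich--Rumsey inequality, with $\alpha<\beta$ giving convergence of the resulting kernels. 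The only cosmetic differences are that the paper applies GRR to each of the three pieces separately rather than to a single increment estimate for $u$, and obtains the $\sup_t$ bound by averaging over a base point $\lambda\in[0,T]$ instead of via the trace-space embedding at $t=0$; neither affects the argument.
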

The proof of Theorem \ref{prop} is contingent upon the following two lemmas, and thus, will be presented at the conclusion of this subsection.

\begin{lem}
\label{23.09.03.15.14}
\begin{enumerate}[(i)]
    \item Let $\phi$ be a Bernstein function,
    $$
    T_th(x):=\mathbb{E}[h(x+X_t)],
    $$
    and $X$ be a SBM with characteristic exponent $\phi$.
    For any $h\in L_p(\bR^d)$, $\theta\in[0,1]$, and $t>0$, we have
    \begin{equation}
    \label{23.08.23.13.11}
        \|\mathrm{e}^{-t}T_th\|_{L_p(\bR^d)}\leq \frac{N}{t^{\theta}}\|h\|_{H_p^{\phi,-2\theta}(\bR^d)},\quad \|(T_t-1)h\|_{L_p(\bR^d)}\leq Nt^{\theta}\|h\|_{H_p^{\phi,2\theta}(\bR^d)},
    \end{equation}
    where $N=N(d,p,\theta)$.
    \item (Garsia-Rodemich-Rumsey inequality) Let $V$ be a Banach space and $h$ be a $V$-valued continuous function.
    Then for $s\leq t$, $\alpha \geq1/q$ and $q>0$,
    $$
    \|h(t)-h(s)\|_{V}\leq N(\alpha,q)|t-s|^{\alpha -\frac{1}{q}}\left(\int_s^t\int_s^t\frac{\|h(r_1)-h(r_2)\|_{V}^q}{|r_1-r_2|^{\alpha q+1}}\mathrm{d}r_2\mathrm{d}r_1\right)^{1/q}.
    $$
\end{enumerate}
\end{lem}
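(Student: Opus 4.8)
The plan is to treat the two parts separately; part (i) carries the real content, part (ii) is classical. For part (i) I would first reduce both estimates to uniform-in-$t$ operator-norm bounds. Writing $A:=1-\phi(\Delta)$, so that $\|h\|_{H_p^{\phi,s}(\bR^d)}=\|A^{s/2}h\|_{L_p(\bR^d)}$, applying the two claimed inequalities to $A^{-\theta}h$ and $A^{\theta}h$ respectively shows that the lemma is equivalent to
$$
\|\mathrm{e}^{-t}T_tA^{\theta}\|_{L_p\to L_p}\leq Nt^{-\theta},\qquad \|(T_t-1)A^{-\theta}\|_{L_p\to L_p}\leq Nt^{\theta},\qquad t>0.
$$
By Lemma \ref{23.09.19.12.41}, $-\phi(\Delta)$ is sectorial with bounded $H^{\infty}$-calculus of angle $0$ and generates the bounded analytic $C_0$-semigroup $T_t$; hence so does $A$, and moreover $0\in\rho(A)$ since $\sigma(-\phi(\Delta))\subseteq[0,\infty)$. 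Because $\mathrm{e}^{-t}T_t=\mathrm{e}^{-tA}$, the first display is just the standard analytic-semigroup bound $\|A^{\theta}\mathrm{e}^{-tA}\|_{L_p\to L_p}\leq Nt^{-\theta}$. For the second, with $B:=-\phi(\Delta)\geq0$ I would write $\mathrm{e}^{-tB}-1=-\int_0^tB\mathrm{e}^{-sB}\,\mathrm{d}s$, factor $B\mathrm{e}^{-sB}(1+B)^{-\theta}=[B(1+B)^{-1}]\,\mathrm{e}^{s}\,A^{1-\theta}\mathrm{e}^{-sA}$, and use $\|B(1+B)^{-1}\|_{L_p\to L_p}\leq N$ together with $\|A^{1-\theta}\mathrm{e}^{-sA}\|_{L_p\to L_p}\leq Ns^{\theta-1}$; integrating over $s\in(0,t)$ gives the bound for $t\leq1$, while for $t\geq1$ one uses only $\|T_t\|_{L_p\to L_p}\leq1$ and $\|(1+B)^{-\theta}\|_{L_p\to L_p}\leq N$.

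Alternatively, and closer in spirit to the proof of Lemma \ref{23.09.19.12.41}, I would realize the two operators as Fourier multipliers and verify Mihlin's condition directly. The symbols $\mathrm{e}^{-t}\mathrm{e}^{-t\phi(|\xi|^2)}(1+\phi(|\xi|^2))^{\theta}$ and $(\mathrm{e}^{-t\phi(|\xi|^2)}-1)(1+\phi(|\xi|^2))^{-\theta}$ have all their $\xi$-derivatives controlled, through the chain and product rules, by the Bernstein bounds $|D^n\phi(\lambda)|\leq N(n)\lambda^{-n}\phi(\lambda)$ (already used for Lemma \ref{23.09.19.12.41}), while their size is governed by the elementary inequalities $\sup_{\mu\geq0}\mu^{\theta}\mathrm{e}^{-t\mu}\leq N(\theta)t^{-\theta}$ and $|1-\mathrm{e}^{-a}|\leq a^{\theta}$ for $a\geq0$, $\theta\in[0,1]$; Mihlin's multiplier theorem then yields the $L_p$-norm bounds $\lesssim t^{-\theta}$ and $\lesssim t^{\theta}$, which is exactly the displayed statement.

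Part (ii) is the classical Garsia--Rodemich--Rumsey inequality, specialized to the Young function $\Psi(x)=x^{q}$ and modulus $p(u)=u^{\alpha}$; its proof applies verbatim to continuous functions taking values in an arbitrary Banach space $V$, so nothing new is required. The one genuinely delicate point is thus in part (i): obtaining the estimates with the \emph{correct} power of $t$ and uniformly over all $t>0$. In the abstract route this amounts to carefully justifying the passage to the shifted operator $A=1-\phi(\Delta)$ and invoking the sharp $L_p$ analytic-semigroup bounds (which themselves rest on Lemma \ref{23.09.19.12.41}); in the multiplier route it is the bookkeeping of the Bernstein-function derivative estimates through the chain rule while keeping track of the $t^{\mp\theta}$ factor over the whole range $t\in(0,\infty)$. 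I expect this uniform tracking of the $t$-power to be the main obstacle; part (ii) presents no difficulty.
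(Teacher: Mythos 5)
Your proposal is correct, and for part (i) it lands on the same underlying mechanism as the paper — everything reduces to the analyticity bound $t\|\phi(\Delta)T_t\|_{L_p\to L_p}\leq N$ from Lemma \ref{23.09.19.12.41} — but the packaging differs. The paper proves the first inequality of \eqref{23.08.23.13.11} by writing $(1-\phi(\Delta))^{\theta}(\mathrm{e}^{-t}T_th)$ via the Balakrishnan-type integral $c(\theta)\int_0^{\infty}(\mathrm{e}^{-(s+t)}T_{s+t}h-\mathrm{e}^{-t}T_th)s^{-1-\theta}\,\mathrm{d}s$, splitting at $s=t$ and using $\|\partial_t(\mathrm{e}^{-t}T_th)\|_{L_p}\leq Nt^{-1}\|h\|_{L_p}$; you instead invoke the moment inequality $\|A^{\theta}\mathrm{e}^{-tA}\|\leq\|A\mathrm{e}^{-tA}\|^{\theta}\|\mathrm{e}^{-tA}\|^{1-\theta}$ for the bounded analytic semigroup $\mathrm{e}^{-tA}=\mathrm{e}^{-t}T_t$ with $0\in\rho(A)$. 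Both are valid and rest on the same input; yours is shorter but leans on more abstract machinery, the paper's is self-contained. Your treatment of the second inequality (write $T_t-1=\int_0^t\phi(\Delta)T_s\,\mathrm{d}s$, factor through $\phi(\Delta)(1-\phi(\Delta))^{-1}$ and $(1-\phi(\Delta))^{1-\theta}T_s$) is essentially identical to the paper's. The Mihlin-multiplier alternative you sketch is genuinely different from the paper's proof of this lemma, though it is exactly the technique the paper uses to establish \eqref{23.08.23.12.33} itself, and the symbol estimates you indicate do close.

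For part (ii), the paper does not cite the classical result but reproves the Garsia--Rodemich--Rumsey inequality from scratch in the Banach-valued setting (following \cite{GRR1970}); your observation that the classical proof transfers verbatim since only the triangle inequality is used is legitimate, just less self-contained. Two small corrections: to recover the stated exponent $|t-s|^{\alpha-1/q}$ from the classical formulation with $\Psi(x)=x^q$ you need the modulus $p(u)=u^{\alpha+1/q}$ (so that $p(|t-s|)^q=|t-s|^{\alpha q+1}$ matches the denominator), not $p(u)=u^{\alpha}$; and the resulting integral $\int_0^{\delta}u^{\alpha-1/q-1}\,\mathrm{d}u$ converges only for $\alpha>1/q$, so the borderline case $\alpha=1/q$ in the statement must be read as the trivial case where the right-hand side is interpreted without the vanishing power (a caveat the paper's own proof shares).
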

\begin{proof}
    $(i)$ For $\theta\in(0,1)$,
    \begin{align*}
        &\|(1-\phi(\Delta))^{\theta}(\mathrm{e}^{-t}T_th)\|_{L_p(\bR^d)}\\
        &=c(\theta)\left\|\int_0^{\infty}\frac{\mathrm{e}^{-(s+t)}T_{s+t}h-\mathrm{e}^{-t}T_th}{s^{\theta}}\frac{\mathrm{d}s}{s}\right\|_{L_p(\bR^d)}\\
        &\leq N\int_0^t\|\mathrm{e}^{-(s+t)}T_{s+t}h-\mathrm{e}^{-t}T_th\|_{L_p(\bR^d)}\frac{\mathrm{d}s}{s^{1+\theta}}+N\|h\|_{L_p(\bR^d)}\int_t^{\infty}\frac{\mathrm{d}s}{s^{1+\theta}}.
    \end{align*}
    By inequality \eqref{23.08.23.12.33} and Lemma \ref{23.09.19.12.43}-$(i)$, we have
    \begin{equation}
    \label{23.08.23.13.02}
        \|\partial_t(T_th)\|_{L_p(\bR^d)}+\|\partial_t(\mathrm{e}^{-t}T_th)\|_{L_p(\bR^d)}\leq\frac{N}{t}\|h\|_{L_p(\bR^d)}
    \end{equation}
    leading to the conclusion that
    $$
    \int_0^t\|\mathrm{e}^{-(s+t)}T_{s+t}h-\mathrm{e}^{-t}T_th\|_{L_p(\bR^d)}\frac{\mathrm{d}s}{s^{1+\theta}}\leq N\|h\|_{L_p(\bR^d)}t^{-1}\int_0^t\frac{\mathrm{d}s}{s^{\theta}}=\frac{N}{t^{\theta}}\|h\|_{L_p(\bR^d)}.
    $$
    For $\theta=1$, we use Lemma \ref{23.09.19.12.43}-$(i)$;
    $$
    \partial_t(\mathrm{e}^{-t}T_th)=(\phi(\Delta)-1)(\mathrm{e}^{-t}T_th)
    $$
    and \eqref{23.08.23.13.02}.

    The second inequality in \eqref{23.08.23.13.11} is derived from equation \eqref{23.04.27.15.01} and the initial inequality in \eqref{23.08.23.13.11}, as follows:
    \begin{align*}
        \|(T_t-1)h\|_{L_p(\bR^d)}&\leq \int_0^t\|[\phi(\Delta)(1-\phi(\Delta))^{-1}](1-\phi(\Delta))^{1-\theta}T_s[(1-\phi(\Delta))^{\theta}h]\|_{L_p(\bR^d)}\mathrm{d}s\\
        &\leq N\int_0^t\|(1-\phi(\Delta))^{1-\theta}T_s[(1-\phi(\Delta))^{\theta}h]\|_{L_p(\bR^d)}\mathrm{d}s\\
        &\leq N\|h\|_{H_p^{\phi,2\theta}(\bR^d)}\int_0^ts^{\theta-1}\mathrm{d}s=Nt^{\theta}\|h\|_{H_p^{\phi,2\theta}(\bR^d)}.
    \end{align*}
    $(ii)$ We follow the argument of \cite[Lemma 1.1]{GRR1970}.
    First, we establish the inequality
    \begin{equation}
    \label{23.09.03.14.00}
        \|h(1)-h(0)\|_{V}^q\leq N(\alpha,q)\int_0^1\int_0^1\frac{\|h(t)-h(s)\|_{V}^q}{|t-s|^{\alpha q+1}}\mathrm{d}s\mathrm{d}t,
    \end{equation}
    for a $V$-valued continuous function $h$ on $[0, 1]$.
    Define
    $$
    I(t):=\int_0^1\frac{\|h(t)-h(s)\|_{V}^q}{|t-s|^{\alpha q+1}}\mathrm{d}s.
    $$
    Utilizing the mean value theorem, there exists $t_0 \in (0, 1)$ such that
    $$
    I(t_0)\leq \int_0^1\int_0^1\frac{\|h(t)-h(s)\|_{V}^q}{|t-s|^{\alpha q+1}}\mathrm{d}s\mathrm{d}t.
    $$
    If $t_{n-1}$ is specified, then let
    $$
    d_{n-1}:=2^{-\frac{q}{\alpha q+1}}t_{n-1}.
    $$
    Let us also denote
    $$
    A:=\int_0^{1}\int_0^1\frac{\|h(t)-h(s)\|_{V}^q}{|t-s|^{\alpha q+1}}\mathrm{d}s\mathrm{d}t,\quad B:=I(t_{n-1}).
    $$
    Applying the mean value theorem once again, we find $t_n \in (0, d_{n-1})$ satisfying \begin{align*}
        2&\geq \frac{1}{A}\int_0^{d_{n-1}}\int_0^1\frac{\|h(t)-h(s)\|_{V}^q}{|t-s|^{\alpha q+1}}\mathrm{d}t\mathrm{d}s+\frac{1}{B}\int_0^{d_{n-1}}\frac{\|h(t_{n-1})-h(s)\|_{V}^q}{|t_{n-1}-s|^{\alpha q+1}}\mathrm{d}s\\
        &=\int_0^{d_{n-1}}\left(\frac{1}{A}\int_0^1\frac{\|h(t)-h(s)\|_{V}^q}{|t-s|^{\alpha q+1}}\mathrm{d}t+\frac{1}{B}\frac{\|h(t_{n-1})-h(s)\|_{V}^q}{|t_{n-1}-s|^{\alpha q+1}}\right)\mathrm{d}s\\
        &=d_{n-1}\left(\frac{1}{A}\int_0^1\frac{\|h(t)-h(t_{n})\|_{V}^q}{|t-t_n|^{\alpha q+1}}\mathrm{d}t+\frac{1}{B}\frac{\|h(t_{n-1})-h(t_n)\|_{V}^q}{|t_{n-1}-t_n|^{\alpha q+1}}\right).
    \end{align*}
    Therefore,
    \begin{equation}
    \label{24.04.10.00.09}
        d_{n-1}I(t_n)\leq 2\int_0^{1}\int_0^1\frac{\|h(t)-h(s)\|_{V}^q}{|t-s|^{\alpha q+1}}\mathrm{d}s\mathrm{d}t
    \end{equation}
    and
    \begin{equation}
    \label{23.08.30.16.21}
        \frac{d_{n-1}\|h(t_n)-h(t_{n-1})\|_{V}^q}{|t_n-t_{n-1}|^{\alpha q+1}}\leq 2I(t_{n-1}).
    \end{equation}
    It follows that $t_n \downarrow 0$ as $n \to \infty$.
    Using $d_{n} \leq d_{n-1}$ and \eqref{24.04.10.00.09}, 
    \begin{equation}
        \label{23.08.30.16.22}
        d_nI(t_n)\leq 2\int_0^1\int_0^1\frac{\|h(t)-h(s)\|_{V}^q}{|t-s|^{\alpha q+1}}\mathrm{d}s\mathrm{d}t.
    \end{equation}
    From \eqref{23.08.30.16.21} and \eqref{23.08.30.16.22}, we derive
    \begin{align*}
        \|h(t_n)-h(t_{n-1})\|_{V}&\leq |t_n-t_{n-1}|^{\alpha +\frac{1}{q}} \left(\frac{2I(t_{n-1})}{d_{n-1}}\right)^{1/q}\\
        &\leq 2^{\frac{2}{q}}|t_n-t_{n-1}|^{\alpha +\frac{1}{q}} d_{n-1}^{-\frac{2}{q}}\left(\int_0^1\int_0^1\frac{\|h(t)-h(s)\|_{V}^q}{|t-s|^{\alpha q+1}}\mathrm{d}s\mathrm{d}t\right)^{1/q}
    \end{align*}
    Since
    \begin{equation*}
    \begin{gathered}
    |t_{n-1}-t_n|^{\alpha+\frac{1}{q}}\leq t_{n-1}^{\alpha+\frac{1}{q}}=2d_{n-1}^{\alpha+\frac{1}{q}},\quad d_{n}^{\alpha+\frac{1}{q}}=\frac{1}{2}t_n^{\alpha+\frac{1}{q}}\leq \frac{1}{2}d_{n-1}^{\alpha+\frac{1}{q}},
    \end{gathered}
    \end{equation*}
    we have
    \begin{align*}
    |t_n-t_{n-1}|^{\alpha+\frac{1}{q}}\leq 2d_{n-1}^{\alpha+\frac{1}{q}}=4\left(d_{n-1}^{\alpha+\frac{1}{q}}-\frac{d_{n-1}^{\alpha+\frac{1}{q}}}{2}\right)\leq 4(d_{n-1}^{\alpha+\frac{1}{q}}-d_{n}^{\alpha+\frac{1}{q}}).
    \end{align*}
    This leads to
    \begin{equation}
    \label{23.09.03.13.58}
    \begin{aligned}
        \|h(t_0)-h(0)\|_{V}&\leq N\left(\int_0^1\int_0^1\frac{\|h(t)-h(s)\|_{V}^q}{|t-s|^{\alpha q+1}}\mathrm{d}s\mathrm{d}t\right)^{1/q}\sum_{n=1}^{\infty}(d_{n-1}^{\alpha+\frac{1}{q}}-d_{n}^{\alpha+\frac{1}{q}})d_{n-1}^{-\frac{2}{q}}\\
        &\leq N\left(\int_0^1\int_0^1\frac{\|h(t)-h(s)\|_{V}^q}{|t-s|^{\alpha q+1}}\mathrm{d}s\mathrm{d}t\right)^{1/q}\sum_{n=1}^{\infty}\int_{d_n^{\alpha+\frac{1}{q}}}^{d_{n-1}^{\alpha+\frac{1}{q}}}x^{-\frac{2}{\alpha q+1}}\mathrm{d}x\\
        &=N\left(\int_0^1\int_0^1\frac{\|h(t)-h(s)\|_{V}^q}{|t-s|^{\alpha q+1}}\mathrm{d}s\mathrm{d}t\right)^{1/q}\sum_{n=1}^{\infty}\int_{d_n}^{d_{n-1}}x^{\alpha-1-\frac{1}{q}}\mathrm{d}x\\
        &\leq N(\alpha,q)\left(\int_0^1\int_0^1\frac{\|h(t)-h(s)\|_{V}^q}{|t-s|^{\alpha q+1}}\mathrm{d}s\mathrm{d}t\right)^{1/q}.
    \end{aligned}
    \end{equation}
    For the function $\tilde{h}(t) := h(1 - t)$, we consider
    $$
    \tilde{I}(t):=\int_0^1\frac{\|\tilde{h}(t)-\tilde{h}(s)\|_{V}^q}{|t-s|^{\alpha q+1}}\mathrm{d}s.
    $$
    Given $\tilde{t}_0 := 1 - t_0$, it follows that
    $$
    \tilde{I}(\tilde{t}_0)=I(t_0)\leq \int_0^1\int_0^1\frac{\|h(t)-h(s)\|_{V}^q}{|t-s|^{\alpha q+1}}\mathrm{d}s\mathrm{d}t.
    $$
    Thus, using similar reasoning as before, we can show
    \begin{equation}
    \label{23.09.03.13.59}
        \|h(t_0)-h(1)\|_V\leq N(\alpha,q)\left(\int_0^1\int_0^1\frac{\|h(t)-h(s)\|_{V}^q}{|t-s|^{\alpha q+1}}\mathrm{d}s\mathrm{d}t\right)^{1/q}.
    \end{equation}
By combining \eqref{23.09.03.13.58} and \eqref{23.09.03.13.59}, we have \eqref{23.09.03.14.00}.

For a general function $h$, consider
$$
\bar{h}(\lambda):=h(s+\lambda(t-s)).
$$
which maps $h$, a $V$-valued continuous function over the interval $[s, t]$, to $\bar{h}$, a similarly $V$-valued continuous function over $[0, 1]$.
Applying the established relation (\ref{23.09.03.14.00}) to $\bar{h}$, we deduce
\begin{align*}
\|h(t)-h(s)\|_V&=\|\bar{h}(1)-\bar{h}(0)\|_V\\
&\leq N(\alpha,q)\left(\int_0^1\int_0^1\frac{\|\bar{h}(r_1)-\bar{h}(r_2)\|_{V}^q}{|r_1-r_2|^{\alpha q+1}}\mathrm{d}r_2\mathrm{d}r_1\right)^{1/q}\\
&\leq N(\alpha,q)|t-s|^{\alpha q-1}\left(\int_s^t\int_s^t\frac{\|h(r_1)-h(r_2)\|_{V}^q}{|r_1-r_2|^{\alpha q+1}}\mathrm{d}r_2\mathrm{d}r_1\right)^{1/q}.
\end{align*}
The lemma is proved.
\end{proof}

\begin{proof}[Proof of Theorem \ref{prop}]
The proof is structured in four distinct steps for clarity.

\textbf{Step 1.} Problem reduction.

It suffices to prove the desired result for the case $\gamma = 2\beta$, since the general case $\gamma \in \mathbb{R}$ can be reduced to this special case via the isometry induced by the operator $(1 - \phi(\Delta))^{\frac{\gamma}{2} - \beta}$. Indeed, we have
\begin{align*}
    &N\|u\|_{\mathcal{H}_{p,q}^{\phi,\gamma}(\tau)}^q\\
    &=N\|(1-\phi(\Delta))^{\frac{\gamma}{2}-\beta}u\|_{\mathcal{H}_{p,q}^{\phi,2\beta}(\tau)}^q\\
    &\geq \bE\left[\sup_{t\leq \tau}\|(1-\phi(\Delta))^{\frac{\gamma}{2}-\beta}u(t,\cdot)\|_{L_p(\bR^d)}^q\right]\\
    &\quad+\bE\left[\left(\sup_{0\leq s<t\leq \tau}\frac{\|((1-\phi(\Delta))^{\frac{\gamma}{2}-\beta}(u(t,\cdot)-u(s,\cdot))\|_{L_p(\bR^d)}}{(t-s)^{\alpha-1/q}}\right)^q\right]\\
    &= \bE\left[\sup_{t\leq \tau}\|u(t,\cdot)\|_{H_p^{\gamma-2\beta}(\bR^d)}^q\right]+\bE\left[\left(\sup_{0\leq s<t\leq \tau}\frac{\|u(t,\cdot)-u(s,\cdot)\|_{H_p^{\phi,\gamma-2\beta}(\bR^d)}}{(t-s)^{\alpha-1/q}}\right)^q\right].
\end{align*}
Therefore, it is enough to establish the estimate for $\gamma = 2\beta$.

Given $\tau \leq T$ and $u \in \cH_{p,q}^{\phi,\gamma}(\tau)$, we define $f := (\mathbb{D}u - \phi(\Delta)u)1_{t \leq \tau}$ and $g = \mathbb{S}u1_{t \leq \tau}$.
Consequently, $f$ resides within $\bH_{p,q}^{\phi,\gamma-2}(T)$, and $g$ within $\bH_{p,q}^{\phi,\gamma-1}(T,\ell_2)$.
Leveraging Theorem \ref{23.07.04.14.29}, we establish that:
    \begin{equation}
    \label{24.03.25.14.33}
    v(t,x):=\mathcal{T}_0u_0(t,x)+\mathcal{T}_1(f-\tilde{\cT}_0u_0+\phi(\Delta)\cT_0u_0)(t,x)+\mathcal{T}_2g(t,x)
    \end{equation}
    is a unique solution to
    \begin{equation}
    \label{23.08.23.13.31}
        \mathrm{d}v=(\phi(\Delta)v+f)\mathrm{d}t+\sum_{k=1}^{\infty}g^k\mathrm{d}w_t^k;\quad v(0)=u(0),
    \end{equation}
    within $\cH_{p,q}^{\phi,2\beta}(T)$ fulfilling the estimate:
    \begin{equation}
    \label{24.03.25.14.13}
    \|v\|_{\cH_{p,q}^{\phi,2\beta}(T)}^q\leq N\left(\|u(0)\|_{U_{p,q}^{\phi,2\beta}}^q+\|f\|_{\bH_{p,q}^{\phi,2\beta-2}(T)}^q+\|g\|_{\bH_{p,q}^{\phi,2\beta-1}(T,\ell_2)}^q\right)\leq N\|u\|_{\cH_{p,q}^{\phi,2\beta}(\tau)}^q.
    \end{equation}
    The notation $\cT_0$, $\tilde{\cT}_0$, $\cT_1$, and $\cT_2$ refers to operators introduced in Theorem \ref{23.09.18.17.09}.
    Given $u$'s equivalency to a solution of \eqref{23.08.23.13.31} and the uniqueness, we deduce $u = v$ within $\cH_{p,q}^{\phi,\gamma}(\tau)$.
    Subsequent application of Theorem \ref{23.07.04.14.29} validates that:
    $$
    \tilde{v}(t,x):=T_tu_0(x)+\cT_1f(t,x)+\cT_2g(t,x)\in \cH_{p,q}^{\phi,2\beta}(T)
    $$
    also constitutes a solution to equation (\ref{23.08.23.13.31}), thereby establishing $u = v = \tilde{v}$ in $\cH_{p,q}^{\phi,2\beta}(\tau)$ due to the uniqueness.
    To fulfill the proof, it suffices to demonstrate the inequality
    \begin{equation}
    \label{24.03.25.14.07}
        \begin{aligned}
        &\bE\left[\left(\sup_{0\leq t\leq T}\|\tilde{v}(t)\|_{L_p(\bR^d)}+\sup_{0\leq s<t\leq T}\frac{\|\tilde{v}(t)-\tilde{v}(s)\|_{L_p(\bR^d)}}{(t-s)^{\alpha-1/q}}\right)^q\right]\\
    &\leq N\left(\|u(0)\|_{U_{p,q}^{\phi,2\beta}}^q+\|f\|_{\bH_{p,q}^{\phi,2\beta-2}(T)}^q+\|g\|_{\bH_{p,q}^{\phi,2\beta-1}(T,\ell_2)}\right),
    \end{aligned}
    \end{equation}
    from which we derive that
\begin{align*}           
    &\bE\left[\left(\sup_{0\leq t\leq \tau}\|u(t)\|_{L_p(\bR^d)}+\sup_{0\leq s<t\leq \tau}\frac{\|u(t)-u(s)\|_{L_p(\bR^d)}}{(t-s)^{\alpha-1/q}}\right)^q\right]\\
    &=\bE\left[\left(\sup_{0\leq t\leq \tau}\|\tilde{v}(t)\|_{L_p(\bR^d)}+\sup_{0\leq s<t\leq \tau}\frac{\|\tilde{v}(t)-\tilde{v}(s)\|_{L_p(\bR^d)}}{(t-s)^{\alpha-1/q}}\right)^q\right]\\
    &\leq\bE\left[\left(\sup_{0\leq t\leq T}\|\tilde{v}(t)\|_{L_p(\bR^d)}+\sup_{0\leq s<t\leq T}\frac{\|\tilde{v}(t)-\tilde{v}(s)\|_{L_p(\bR^d)}}{(t-s)^{\alpha-1/q}}\right)^q\right]\\
    &\leq N\left(\|u(0)\|_{U_{p,q}^{\phi,2\beta}}^q+\|f\|_{\bH_{p,q}^{\phi,2\beta-2}(T)}^q+\|g\|_{\bH_{p,q}^{\phi,2\beta-1}(T,\ell_2)}\right)\leq N\|u\|_{\cH_{p,q}^{\phi,2\beta}(\tau)}^q.
    \end{align*}

    Now, we proceed to affirm that if
    \begin{equation*}
    \label{24.03.11.14.04}
    \left(\mathbb{E}\left[\int_0^T\|h(\lambda)\|_{L_p(\bR^d)}^q\mathrm{d}\lambda\right]\right)+\mathbb{E}\left[\left(\sup_{0\leq s\leq t\leq T}\frac{\|h(t)-h(s)\|_{L_p(\bR^d)}}{(t-s)^{\alpha-1/q}}\right)^{q}\right]\leq K,
    \end{equation*}
    then it follows that
    \begin{equation*}
    \label{24.03.11.14.05}
    \mathbb{E}\left[\left(\sup_{0\leq t\leq T}\|h(t)\|_{L_p(\bR^d)}\right)^q\right]\leq N(\alpha,T,q)K.
    \end{equation*}
    Applying Minkowski's inequality for a fixed $\lambda\in[0,T]$, we derive that
    \begin{equation*}
    \label{24.03.11.13.57}
    \begin{aligned}
        \mathbb{E}\left[\left(\sup_{0\leq t\leq T}\|h(t)\|_{L_p(\bR^d)}\right)^q\right]&\leq N(q)\mathbb{E}\left[\left(\sup_{0\leq s\leq t\leq T}\|h(t)-h(s)\|_{L_p(\bR^d)}\right)^q\right]\\
        &\quad + N(q)\mathbb{E}\left[\|h(\lambda)\|_{L_p(\bR^d)}^q\right].
    \end{aligned}
    \end{equation*}
    Integrating with respect to $\lambda$ over the interval $[0,T]$ yields
    \begin{align*}
        \mathbb{E}\left[\left(\sup_{0\leq t\leq T}\|h(t)\|_{L_p(\bR^d)}\right)^q\right]&\leq N(q)\mathbb{E}\left[\left(\sup_{0\leq s\leq t\leq T}\|h(t)-h(s)\|_{L_p(\bR^d)}\right)^q\right]\\
        &\quad + \frac{N(q)}{T}\mathbb{E}\left[\int_0^T\|h(\lambda)\|_{L_p(\bR^d)}^q\mathrm{d}\lambda\right]\\
        &\leq N(\alpha,T,q)K.
    \end{align*}
    Given \eqref{24.03.25.14.13} and the aforementioned assertion, to substantiate \eqref{24.03.25.14.07}, it suffices to demonstrate
    \begin{equation}
    \label{24.03.25.21.04}
    \begin{aligned}
        &\mathbb{E}\left[\left(\sup_{0\leq s\leq t\leq T}\frac{\|\tilde{v}(t)-\tilde{v}(s)\|_{L_p(\bR^d)}}{(t-s)^{\alpha-1/q}}\right)^{q}\right]\\
        &\leq N\left(\|u(0)\|_{U_{p,q}^{\phi,2\beta}}^q+\|f\|_{\bH_{p,q}^{\phi,2\beta-2}(T)}^q+\|g\|_{\bH_{p,q}^{\phi,2\beta-1}(T,\ell_2)}\right).
    \end{aligned}
    \end{equation}
    
By \eqref{24.03.25.14.33},
\begin{equation*}
    \begin{aligned}
        &\mathbb{E}\left[\left(\sup_{0\leq s\leq t\leq T}\frac{\|\tilde{v}(t)-\tilde{v}(s)\|_{L_p(\bR^d)}}{(t-s)^{\alpha-1/q}}\right)^{q}\right]\\
        &\leq N\mathbb{E}\left[\left(\sup_{0\leq s\leq t\leq T}\frac{\|T_tu_0-T_su_0\|_{L_p(\bR^d)}}{(t-s)^{\alpha-1/q}}\right)^{q}\right]+N\mathbb{E}\left[\left(\sup_{0\leq s\leq t\leq T}\frac{\|\cT_1f(t)-\cT_1f(s)\|_{L_p(\bR^d)}}{(t-s)^{\alpha-1/q}}\right)^{q}\right]\\
        &+N\mathbb{E}\left[\left(\sup_{0\leq s\leq t\leq T}\frac{\|\cT_2g(t)-\cT_2g(s)\|_{L_p(\bR^d)}}{(t-s)^{\alpha-1/q}}\right)^{q}\right],
    \end{aligned}
\end{equation*}
where $N=N(q)$.
To satisfy \eqref{24.03.25.21.04}, it suffices to show
\begin{align}
    &\bE\left[\left(\sup_{0\leq s<t\leq T}\frac{\|T_tu_0-T_su_0\|_{L_p(\bR^d)}}{(t-s)^{\alpha -1/q}}\right)^{q}\right]\leq N\|u_0\|_{U_{p,q}^{\phi,2\beta}}^q,\label{24.03.25.21.05}\\
    &\bE\left[\left(\sup_{0\leq s<t\leq T}\frac{\|\cT_1f(t)-\cT_1f(s)\|_{L_p(\bR^d)}}{(t-s)^{\alpha-1/q}}\right)^q\right]\leq N\|f\|_{\bH_{p,q}^{\phi,2\beta-2}(T)}^q,\label{24.03.25.21.10}\\
    &\bE\left[\left(\sup_{0\leq s<t\leq T}\frac{\|\cT_2g(t)-\cT_2g(s)\|_{L_p(\bR^d)}}{(t-s)^{\alpha-1/q}}\right)^q\right]\leq N\|g\|_{\bH_{p,q}^{\phi,2\beta-1}(T,\ell_2)}^q,\label{24.03.25.21.11}
\end{align}
then
\begin{equation*}
    \begin{aligned}
        \mathbb{E}\left[\left(\sup_{0\leq s\leq t\leq T}\frac{\|\tilde{v}(t)-\tilde{v}(s)\|_{L_p(\bR^d)}}{(t-s)^{\alpha-1/q}}\right)^{q}\right]\leq N\left(\|u_0\|_{U_{p,q}^{\phi,2\beta}}^q+\|f\|_{\bH_{p,q}^{\phi,2\beta-2}(T)}^q+\|g\|_{\bH_{p,q}^{\phi,2\beta-1}(T,\ell_2)}^q\right).
    \end{aligned}
\end{equation*}
Now, we proceed to verify \eqref{24.03.25.21.05}, \eqref{24.03.25.21.10}, and \eqref{24.03.25.21.11} in the subsequent portions of this proof.

    \textbf{Step 2.} Proof of \eqref{24.03.25.21.05} for $1 \geq \beta > \alpha \geq 1/q$.
    
    Applying Lemma \ref{23.09.03.15.14}-$(ii)$ in conjunction with H\"older's inequality, we derive that
    \begin{align*}
        &\bE\left[\left(\sup_{0\leq s<t\leq T}\frac{\|T_tu_0-T_su_0\|_{L_p(\bR^d)}}{(t-s)^{\alpha-1/q}}\right)^{q}\right]\\
        &\leq N\int_0^T\int_0^T\frac{\bE\left[\|T_tu_0(r_1)-T_su_0(r_2)\|_{L_p(\bR^d)}^q\right]}{|r_1-r_2|^{\alpha q+1}}\mathrm{d}r_2\mathrm{d}r_1\\
        &\leq N\int_0^{T}\int_{0}^{T-l_1}\bE\left[\|(T_{l_1}-1)T_{l_2}u_0\|_{L_p(\bR^d)}^q\right]\mathrm{d}l_2\frac{\mathrm{d}l_1}{l_1^{1+\alpha q}}=:I_1(T).
    \end{align*}
    Utilizing Lemma \ref{23.09.03.15.14}-(i) and Theorem \ref{23.09.18.17.09}, we obtain
    \begin{align*}
        I_1(T)&\leq N\int_0^{T}\int_{0}^{T-l_1}\frac{\bE\left[\|T_{l_2}u_0\|_{H_p^{\phi,2\beta}(\bR^d)}^q\right]}{l_1^{1+(\alpha-\beta) q}}\mathrm{d}l_2\mathrm{d}l_1\leq N\int_0^{T}\frac{\|u_0\|_{U_{p,q}^{\phi,2\beta}}^q}{l_1^{1+(\alpha-\beta) q}}\mathrm{d}l_1= N\|u_0\|_{U_{p,q}^{\phi,2\beta}}^q.
    \end{align*}
    This calculation confirms \eqref{24.03.25.21.05}.

    \textbf{Step 3.} Proof of \eqref{24.03.25.21.10} for $1 \geq \beta > \alpha \geq 1/q$.
    
    Utilizing Lemma \ref{23.09.03.15.14}-$(ii)$ and employing H\"older's inequality, Minkowski's inequality, we deduce
    \begin{align*}
        &\bE\left[\left(\sup_{0\leq s<t\leq T}\frac{\|\cT_1f(t)-\cT_1f(s)\|_{L_p(\bR^d)}}{(t-s)^{\alpha-1/q}}\right)^{q}\right]\\
        &\leq N\int_0^T\int_0^T\frac{\bE\left[\|\cT_1f(r_1)-\cT_1f(r_2)\|_{L_p(\bR^d)}^q\right]}{|r_1-r_2|^{\alpha q+1}}\mathrm{d}r_2\mathrm{d}r_1\\
        &= N\int_0^{T}\int_{0}^{T-l_1}\frac{1}{l_1^{1+\alpha q}}\bE\left[\left\|(T_{l_1}-1)\cT_1f(l_2)+\int_0^{l_1}T_{\lambda}f(l_1+l_2-\lambda)\mathrm{d}\lambda\right\|_{L_p(\bR^d)}^q\right]\mathrm{d}l_2\mathrm{d}l_1\\
        &\leq N\int_0^{T}\int_{0}^{T-l_1}\frac{1}{l_1^{1+\alpha q}}\bE\left[\|(T_{l_1}-1)\cT_1f(l_2)\|_{L_p(\bR^d)}^q\right]\mathrm{d}l_2\mathrm{d}l_1\\
        &\quad+N\int_0^{T}\int_{0}^{T-l_1}\frac{1}{l_1^{1+\alpha q}}\mathbb{E}\left[\left(\int_0^{l_1}\|T_{\lambda}f(l_1+l_2-\lambda)\|_{L_p(\bR^d)}\mathrm{d}\lambda\right)^q\right]\mathrm{d}l_2\mathrm{d}l_1\\
        &=:I_{2,1}(T)+I_{2,2}(T).
    \end{align*}
    By Lemma \ref{23.09.03.15.14}-$(i)$ and Theorem \ref{23.09.18.17.09}-$(ii)$,
    \begin{align*}
        I_{2,1}(T)\leq N\int_0^T\int_0^{T-l_1}\frac{1}{l_1^{1+(\alpha-\beta)p}}\mathbb{E}\left[\|\cT_1f(l_2)\|_{H_p^{\phi,2\beta}(\bR^d)}^q\right]\mathrm{d}l_2\mathrm{d}l_1\leq N\|f\|_{\mathbb{H}_{p,q}^{\phi,2\beta-2}(T)}^q.
    \end{align*}
    Through H\"older's inequality, Fubini's theorem, and Lemma \ref{23.09.03.15.14}-$(i)$, it follows
    \begin{align*}
        I_{2,2}(T)&\leq N\int_0^{T}\int_{0}^{T-l_1}\frac{1}{l_1^{2+(\alpha-\beta) q}}\bE\left[\int_0^{l_1}\lambda^{(1-\beta)q}\|T_{\lambda}f(l_1+l_2-\lambda)\|_{L_p(\bR^d)}^q\mathrm{d}\lambda\right]\mathrm{d}l_2\mathrm{d}l_1\\
        &\leq N \int_0^{T}\int_{0}^{T-l_1}\frac{1}{l_1^{2+(\alpha-\beta) q}}\bE\left[\int_{0}^{l_1}\|f(l_2+\lambda)\|_{H_p^{\phi,2\beta-2}(\bR^d)}^q\mathrm{d}\lambda\right]\mathrm{d}l_2\mathrm{d}l_1\\
        &\leq N\|f\|_{\bH_{p,q}^{\phi,2\beta-2}(T)}^q.
    \end{align*}
This calculation confirms \eqref{24.03.25.21.10}.

    \textbf{Step 4.} This step is dedicated to establishing that \eqref{24.03.25.21.11} is valid when $1/2 \geq \beta > \alpha \geq 1/q$.
    
    Employing Lemma \ref{23.09.03.15.14}-$(ii)$ alongside H\"older's inequality, Minkowski's inequality we obtain
    \begin{align*}
        &\bE\left[\left(\sup_{0\leq s<t\leq T}\frac{\|\cT_2g(t)-\cT_2g(s)\|_{L_p(\bR^d)}}{(t-s)^{\alpha-1/q}}\right)^{q}\right]\\
        &\leq N\int_0^T\int_0^T\frac{\bE\left[\|\cT_2g(r_1)-\cT_2g(r_2)\|_{L_p(\bR^d)}^q\right]}{|r_1-r_2|^{\alpha q+1}}\mathrm{d}r_2\mathrm{d}r_1\\
        &= N\int_0^{T}\int_{0}^{T-l_1}\frac{1}{l_1^{1+\alpha q}}\bE\left[\left\|(T_{l_1}-1)\cT_2g(l_2)+\sum_{k=1}^{\infty}\int_{l_2}^{l_2+l_1}T_{l_1+l_2-\lambda}g^k(\lambda)\mathrm{d}w^k_{\lambda}\right\|_{L_p(\bR^d)}^q\right]\mathrm{d}l_2\mathrm{d}l_1\\
        &\leq N\int_0^{T}\int_{0}^{T-l_1}\frac{1}{l_1^{1+\alpha q}}\bE\left[\left\|(T_{l_1}-1)\cT_2g(l_2)\right\|_{L_p(\bR^d)}^q\right]\mathrm{d}l_2\mathrm{d}l_1\\
        &\quad +N\int_0^{T}\int_{0}^{T-l_1}\frac{1}{l_1^{1+\alpha q}}\bE\left[\left\|\sum_{k=1}^{\infty}\int_{l_2}^{l_2+l_1}T_{l_1+l_2-\lambda}g^k(\lambda)\mathrm{d}w^k_{\lambda}\right\|_{L_p(\bR^d)}^q\right]\mathrm{d}l_2\mathrm{d}l_1\\
        &=:I_{3,1}(T)+I_{3,2}(T).
    \end{align*}
    By Lemma \ref{23.09.03.15.14}-$(i)$ and Theorem \ref{23.09.18.17.09}-$(iii)$,
    \begin{align*}
        I_{3,1}(T)\leq N\int_0^T\int_0^{T-l_1}\frac{1}{l_1^{1+(\alpha-\beta)q}}\mathbb{E}\left[\|\cT_2g\|_{H_p^{\phi,2\beta}(\bR^d)}^q\right]\mathrm{d}l_2\mathrm{d}l_1\leq N\|g\|_{\mathbb{H}_{p,q}^{\phi,2\beta-1}(T,\ell_2)}^q.
    \end{align*}
Taking $\{\varphi_n^{\omega}\}_{n = 1}^{\infty} \subseteq C_c^{\infty}(\mathbb{R}^d)$ with $\|\varphi_n^{\omega}\|_{L_{p'}(\mathbb{R}^d)} \leq 1$ and
$$
\left\|\sum_{k=1}^{\infty}\int_{l_2}^{l_2+l_1}T_{l_1+l_2-\lambda}g^k(\lambda)\mathrm{d}w^k_{\lambda}\right\|_{L_p(\bR^d)}=\lim_{n\to\infty}\left|\int_{\bR^d}\sum_{k=1}^{\infty}\int_{l_2}^{l_2+l_1}T_{l_1+l_2-\lambda}g^k(\lambda,x)\mathrm{d}w^k_{\lambda}\varphi_n^{\omega}(x)\mathrm{d}x\right|.
$$
Applying Fatou's lemma, the Burkholder-Davis-Gundy inequality, and H\"older's inequality, it follows
\begin{align*}
    &\bE\left[\left\|\sum_{k=1}^{\infty}\int_{l_2}^{l_2+l_1}T_{l_1+l_2-\lambda}g^k(\lambda)\mathrm{d}w^k_{\lambda}\right\|_{L_p(\bR^d)}^q\right]\\
    &\leq \liminf_{n\to\infty}\bE\left[\left|\sum_{k=1}^{\infty}\int_{l_2}^{l_2+l_1}\left(\int_{\bR^d}T_{l_1+l_2-\lambda}g^k(\lambda,x)\varphi_n^{\omega}(x)\mathrm{d}x\right)\mathrm{d}w^k_{\lambda}\right|^q\right]\\
    &\leq \liminf_{n\to\infty}\bE\left[\left(\int_{l_2}^{l_2+l_1}\left|\int_{\bR^d}T_{l_1+l_2-\lambda}g(\lambda,x)\varphi_n^{\omega}(x)\mathrm{d}x\right|_{l_2}^2\mathrm{d}\lambda\right)^{q/2}\right]\\
    &\leq \bE\left[\left(\int_{0}^{l_1}\|T_{\lambda}g(l_1+l_2-\lambda)\|_{L_p(\bR^d;\ell_2)}^2\mathrm{d}\lambda\right)^{q/2}\right].
\end{align*}
Incorporating Lemma \ref{23.09.03.15.14}-(i) and \cite[Theorem 5.5.1]{grafakos2014classical}, we derive
\begin{align*}
    \left(\int_{0}^{l_1}\|T_{\lambda}g(l_1+l_2-\lambda)\|_{L_p(\bR^d;\ell_2)}^2\mathrm{d}\lambda\right)^{q/2}&\leq l_1^{\beta q-1}\int_0^{l_1}\lambda^{(1-2\beta)q/2}\|T_{\lambda}g(l_1+l_2-\lambda)\|_{L_p(\bR^d;\ell_2)}^q\mathrm{d}\lambda\\
    &\leq Nl_1^{\beta q-1}\int_0^{l_1}\|g(l_1+l_2-\lambda)\|_{H_p^{\phi,2\beta-1}(\bR^d;\ell_2)}^q\mathrm{d}\lambda.
\end{align*}
Therefore,
\begin{align*}
    I_{3,2}(T)&\leq N\int_0^{T}\int_{0}^{T-l_1}\frac{1}{l_1^{2+(\alpha-\beta) q}}\int_0^{l_1}\|g(l_1+l_2-\lambda)\|_{H_p^{\phi,2\beta-1}(\bR^d;\ell_2)}^q\mathrm{d}\lambda\mathrm{d}l_2\mathrm{d}l_1\\
    &\leq N\|g\|_{\bH_{p,q}^{\phi,2\beta-1}(T,\ell_2)}^q.
\end{align*}
This proves \eqref{24.03.25.21.11}.
The theorem is proved.
\end{proof}

\subsection{Non-explosivity of local solutions: Proof of Theorem \ref{local}-(iv)}
\label{24.03.22.14.29}

To prove Theorem \ref{local}-(iv), we require the auxiliary result presented in the following lemma.
\begin{lem}
\label{24.02.04.14.46}
    $(i)$ Define the function
    $$
    \chi(x):=\frac{1}{\cosh(|x|)}
    $$
    and let $X=(X_t)_{t\geq0}$ be a SBM with characteristic exponent $\phi$.
    Consider the function
    $$
    \psi_k(x):=\int_{0}^{\infty}\mathbb{E}[\chi((x+X_t)/k)]\mathrm{e}^{-t}\mathrm{d}t.
    $$
    Then, $\psi_k$ satisfies the integro-differential equation
    $$
    \phi(\Delta)\psi_k(x)-\psi_k(x)=-\chi(x/k),\quad \forall x\in\bR^d.
    $$
    Moreover,
    $$
    |\nabla\psi_k(x)|\leq \frac{\psi_k(x)}{k},\quad \psi_k(x)\leq \psi_{k+1}(x).
    $$

    $(ii)$ Assume the conditions of Theorem \ref{local}-(iv) are met and let $u_m$ be a solution to equation \eqref{23.07.08.13.42} as described in Theorem \ref{local}-(i).
    For any $\alpha \in (0, 1)$,
$$
    \bE\left[\sup_{t\leq\tau}\|u_m(t,\cdot)\|_{L_1(\bR^d)}^{\alpha}+\left(\int_0^{\tau}\int_{\bR^d}|u_m(t,x)|^{1+\lambda_{\mathrm{s.d.}}}h_m(u_m(t,x))\mathrm{d}x\mathrm{d}t\right)^{\alpha}\right]\leq N\|u_0\|_{L_1(\Omega\times\bR^d)}^{\alpha},
    $$
where $N = N(\alpha, T, \zeta)$.
\end{lem}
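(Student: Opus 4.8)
\emph{Part (i).} The idea is to recognize $\psi_k$ as the resolvent of $\phi(\Delta)$ at $1$ applied to $\chi(\cdot/k)$: since $\mathbb{E}[\chi((x+X_t)/k)]=T_t[\chi(\cdot/k)](x)$, one has $\psi_k=\int_0^\infty e^{-t}T_t[\chi(\cdot/k)]\,\mathrm{d}t=(1-\phi(\Delta))^{-1}[\chi(\cdot/k)]$. Here $\chi(x)=1/\cosh|x|$ is $C^\infty$ (a power series in $|x|^2$) with all derivatives decaying exponentially, so $\chi(\cdot/k)$ lies in every $H_{p}^{\phi,s}(\mathbb{R}^d)$; arguing as in \eqref{23.04.27.15.01} together with Lemma \ref{23.09.19.12.43}, $\psi_k$ is well defined and $(1-\phi(\Delta))\psi_k=\chi(\cdot/k)$, i.e.\ $\phi(\Delta)\psi_k-\psi_k=-\chi(\cdot/k)$. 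Differentiating under the integral, $\nabla\psi_k=\tfrac1k\int_0^\infty e^{-t}T_t[(\nabla\chi)(\cdot/k)]\,\mathrm{d}t$; since $|\nabla\chi(x)|=\tanh|x|\cdot\chi(x)\le\chi(x)$ pointwise and $T_t$ preserves nonnegativity, $|\nabla\psi_k|\le\psi_k/k$. Finally, $\cosh$ being increasing on $[0,\infty)$, $k\mapsto\chi(x/k)$ is nondecreasing with $\chi(x/k)\uparrow1$; hence $\psi_k\le\psi_{k+1}$ and, by monotone convergence, $\psi_k\uparrow\int_0^\infty e^{-t}\,\mathrm{d}t=1$ — the key fact for part (ii), which lets $\psi_k$ serve as a regularized constant weight.

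\emph{Part (ii): set-up.} Fix $m$. By Theorem \ref{local}-(ii), $u_m\ge0$; and since the hypotheses of Theorem \ref{local}-(iv) include \eqref{24.04.06.19.26} (equivalently $\tfrac2q+\tfrac d{\delta_0p}<\gamma$), Corollary \ref{23.09.05.17.43} gives $\mathbb{E}[|u_m|_{C([0,\tau]\times\mathbb{R}^d)}^q]<\infty$; write $U:=\sup_{[0,\tau]\times\mathbb{R}^d}u_m<\infty$ a.s. I would test \eqref{23.07.08.13.42} against $\psi_k$ (legitimate by density, as $\psi_k\in H_{p'}^{\phi,2-\gamma}(\mathbb{R}^d)$ and every pairing in the weak formulation is continuous in the test function): for $Y_k(t):=(u_m(t,\cdot),\psi_k)\ge0$ and $t\le\tau$,
\[
Y_k(t)=(u_0,\psi_k)+\int_0^t\Big[(u_m,\phi(\Delta)\psi_k)+(\zeta F_m(u_m),\psi_k)\pm(B_m(u_m),\vec b\cdot\nabla\psi_k)\Big]\mathrm{d}s+M^k_t,
\]
with $M^k_t:=\sum_{j}\int_0^t(\xi\varphi_m(u_m)(\pi\ast e_j),\psi_k)\,\mathrm{d}w^j_s$. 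Now use part (i): $\phi(\Delta)\psi_k=\psi_k-\chi(\cdot/k)$ and $u_m,\chi\ge0$ give $(u_m,\phi(\Delta)\psi_k)\le Y_k$; $\zeta\ge\zeta_0>0$, $F\le0$, $-F(u)\ge c_{\mathrm{s.d.}}|u|^{1+\lambda_{\mathrm{s.d.}}}$ and $h_m\ge0$ give $(\zeta F_m(u_m),\psi_k)\le-\zeta_0c_{\mathrm{s.d.}}D_k(s)$ with $D_k(s):=\int_{\mathbb{R}^d}u_m(s,x)^{1+\lambda_{\mathrm{s.d.}}}h_m(u_m(s,x))\psi_k(x)\,\mathrm{d}x\ge0$ (the dissipation); and $|\vec b\cdot\nabla\psi_k|\le K|\nabla\psi_k|\le\tfrac Kk\psi_k$ together with $|B_m(u_m)|\le c_{\mathrm{b.}}(2m)^{\lambda_{\mathrm{b.}}}u_m$ gives $|(B_m(u_m),\vec b\cdot\nabla\psi_k)|\le\tfrac{C_m}kY_k(s)$, $C_m:=Kc_{\mathrm{b.}}(2m)^{\lambda_{\mathrm{b.}}}$. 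Altogether, for all $t\le\tau$,
\[
Y_k(t)+\zeta_0c_{\mathrm{s.d.}}\!\int_0^t\! D_k(s)\,\mathrm{d}s\;\le\;(u_0,\psi_k)+\Big(1+\tfrac{C_m}k\Big)\!\int_0^t\! Y_k(s)\,\mathrm{d}s+M^k_t .
\]

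\emph{Part (ii): conclusion, once the martingale is tamed.} Taking $\sup_{t\le\tau}$, raising to the power $\alpha\in(0,1)$ (using subadditivity of $x\mapsto x^\alpha$), and taking expectations, using $\mathbb{E}[(u_0,\psi_k)^\alpha]\le(\mathbb{E}\,\|u_0\|_{L_1(\mathbb{R}^d)})^\alpha=\|u_0\|_{L_1(\Omega\times\mathbb{R}^d)}^\alpha$ (Jensen; $u_0\ge0$, $\psi_k\le1$) and $\big(\!\int_0^{\tau\wedge t}\!Y_k\big)^\alpha\le T^{\alpha-1}\!\int_0^t(\sup_{r\le\tau\wedge s}Y_k(r))^\alpha\mathrm{d}s$ (Jensen), everything reduces to an absorption bound
\[
\mathbb{E}\Big[\sup_{t\le\tau}|M^k_t|^\alpha\Big]\le\varepsilon\,\mathbb{E}\Big[\big(\sup_{t\le\tau}Y_k(t)\big)^\alpha\Big]+N_\varepsilon\,\mathbb{E}\Big[\big(\textstyle\int_0^\tau Y_k(s)\,\mathrm{d}s\big)^\alpha\Big] .
\]
Choosing $\varepsilon$ small to absorb the first term (after localizing to a short time interval and iterating, if the constants force it) and applying Gr\"onwall's inequality gives $\mathbb{E}[(\sup_{t\le\tau}Y_k)^\alpha+(\int_0^\tau D_k)^\alpha]\le N\|u_0\|_{L_1(\Omega\times\mathbb{R}^d)}^\alpha$, with $N$ depending on $\alpha,T,\zeta$ and on $m,k$ only through the benign factor $1+C_m/k$. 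Letting $k\to\infty$, part (i) gives $\psi_k\uparrow1$, so by monotone convergence $Y_k(t)\uparrow\|u_m(t,\cdot)\|_{L_1(\mathbb{R}^d)}$ and $D_k(s)\uparrow\int_{\mathbb{R}^d}u_m(s,x)^{1+\lambda_{\mathrm{s.d.}}}h_m(u_m(s,x))\,\mathrm{d}x$, while $1+C_m/k\to1$; Fatou's lemma then yields the asserted estimate with a constant independent of $m$.

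\emph{The main obstacle.} Everything rests on the martingale estimate above, i.e.\ on controlling $\langle M^k\rangle_\tau=\int_0^\tau\|\xi\varphi_m(u_m(s))\psi_k\|_{\mathfrak{H}}^2\,\mathrm{d}s$ by $\sup_{s\le\tau}Y_k(s)$, $\int_0^\tau Y_k$ and moments of $U$, with constants that neither depend on $m$ nor blow up as $k\to\infty$ (note that $\|\psi_k\|_{\mathfrak{H}}$ and $\|\psi_k\|_{H^{\phi,1-\gamma}_{p'}}$ both diverge with $k$, so the naive bounds fail). Here the reinforced Dalang condition enters decisively: our hypothesis is the strongly reinforced one, so Proposition \ref{24.04.21.21.16} reduces it to the case $r=1$; and — using that $\pi$ is a nonnegative measure, so that $\|\cdot\|_{\mathfrak{H}}$ is monotone under pointwise domination by nonnegative functions, together with $|\xi\varphi_m(u_m)|\le Kc_{\mathrm{s.m.}}U^{\lambda_{\mathrm{s.m.}}}u_m$, the cut-off $h_m$, and estimate \eqref{24.03.04.14.29} (cf.\ \cite[Lemma 4.4]{CH2021}) — one should obtain a bound $\|\xi\varphi_m(u_m(s))\psi_k\|_{\mathfrak{H}}^2\lesssim Y_k(s)^2$ with constant finite and independent of $m,k$ (or, absorbing part of it into $D_k$, exploiting the parameter balance \eqref{24.04.24.19.46}). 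Granting this, $\langle M^k\rangle_\tau\lesssim(\sup_{s\le\tau}Y_k)\int_0^\tau Y_k$, and Burkholder--Davis--Gundy with exponent $\alpha$ followed by Young's inequality $a^{\alpha/2}b^{\alpha/2}\le\tfrac\varepsilon2a^\alpha+\tfrac1{2\varepsilon}b^\alpha$ (precisely where $\alpha<1$ is needed) yields the absorption inequality, the moments of $U$ being finite by Corollary \ref{23.09.05.17.43}. Establishing this quadratic-variation estimate with the required uniformity is, I expect, the technical heart of the argument; checking that $M^k$ is a genuine square-integrable martingale is a comparatively routine but necessary preliminary.
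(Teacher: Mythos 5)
Your part (i) is essentially the paper's argument (Fourier/resolvent identity, $|\nabla\chi|\le\chi$, monotonicity of $\cosh$), and your set-up for part (ii) — testing against $\psi_k$, using $\phi(\Delta)\psi_k-\psi_k=-\chi(\cdot/k)\le0$ against $u_m\ge0$, the sign of $\zeta F_m$, and the $k^{-1}$ gain on the Burgers' term — matches the paper. The problem is exactly the point you flag as ``the main obstacle'': you never close the martingale estimate, and the route you sketch for it does not work. The paper needs no quadratic-variation bound at all: it applies Krylov's $\mathbb{E}\bigl[\sup_t(\cdot)^{\alpha}\bigr]$ lemma for nonnegative semimartingales (\cite[Theorem III.6.8]{Kry1995}) to $Y_k(t)\mathrm{e}^{-t}=Y_k(0)+(\text{drift})+M^k_t$. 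That lemma only uses nonnegativity of $Y_k$ and the decomposition of the drift into a controllable increasing part and a signed dissipative part; the martingale $M^k$ enters with no hypothesis beyond being a local martingale. This is precisely why the statement is restricted to $\alpha\in(0,1)$, and it is the missing ingredient in your write-up.

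By contrast, your proposed bound $\|\xi\varphi_m(u_m(s))\psi_k\|_{\mathfrak{H}}^2\lesssim Y_k(s)^2$ with a constant independent of $m$ and $k$ is not justified and is in general unavailable. First, $\|f\|_{\mathfrak{H}}^2$ is morally $\int\int f(x)f(y)\,\pi(x-y)\,\mathrm{d}x\,\mathrm{d}y$, and dominating it by $\|f\|_{L_1}^2$ would require $\pi$ (equivalently $\mu$) to be bounded/finite, which Assumption \ref{main_ass_corr} does not give. Second, to linearize $\varphi_m(u_m)\le c_{\mathrm{s.m.}}U^{\lambda_{\mathrm{s.m.}}}u_m$ you pay a factor $U^{\lambda_{\mathrm{s.m.}}}$ whose moments are exactly what Theorem \ref{local}-(iv) is trying to control uniformly in $m$ — the argument becomes circular (the uniform-in-$m$ control of $|u_m|_{C_{t,x}}$ is carried out only later, in Steps 1--3 of the proof of Theorem \ref{local}-(iv), and it uses the present lemma as input). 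Replacing the BDG/absorption scheme by the citation of \cite[Theorem III.6.8]{Kry1995} (after taking $\tilde\tau\le\tau$ and noting $\mathbb{E}[I_4]=0$) repairs the proof; the rest of your argument, including the choice $k^{\alpha}\ge 2N_1T^{\alpha}$ to absorb the $k^{-1}$ Burgers' contribution and the monotone convergence $\psi_k\uparrow1$, then goes through as in the paper.
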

\begin{proof}
We address both parts of the lemma separately.

    $(i)$ Define the function $f_k(x) := \chi(x/k)$.
    Then,
    $
    \psi_k(x)=\int_0^{\infty}\bE[f_k(x+X_t)]\mathrm{e}^{-t}\mathrm{d}t$.
   Applying the Fourier transform to $\phi(\Delta)\psi_k - \psi_k$ yields
    \begin{align*}
        \cF[\phi(\Delta)\psi_k-\psi_k](\xi)&=(-\phi(|\xi|^2)-1)\cF[\psi_k](\xi)\\
        &=-(\phi(|\xi|^2)+1)\mathcal{F}[f_k](\xi)\int_0^{\infty}\mathbb{E}[\mathrm{e}^{i\xi\cdot X_t}]\mathrm{e}^{-t}\mathrm{d}t\\
        &=-(\phi(|\xi|^2)+1)\mathcal{F}[f_k](\xi)\int_0^{\infty}\mathrm{e}^{-t\phi(|\xi|^2)}\mathrm{e}^{-t}\mathrm{d}t=-\cF[f_k](\xi).
    \end{align*}
    This leads to $\phi(\Delta)\psi_k - \psi_k = f_k$.
    Noting that 
    $$
    |\nabla f_k(x)|\leq \frac{|\nabla\chi(x/k)|}{k}\leq \frac{\chi(x/k)}{k},
    $$
    we deduce
    \begin{align*}
        |\nabla\psi_k(x)|&\leq \int_{0}^{\infty}\mathbb{E}[|\nabla f_k(x+X_t)|]\mathrm{e}^{-t}\mathrm{d}t\\
        &\leq \frac{1}{k}\int_0^{\infty}\mathbb{E}[\chi((x+X_t)/k)]\mathrm{e}^{-t}\mathrm{d}t=\frac{\psi_k(x)}{k}.
    \end{align*}
    The second inequality is obtained by utilizing the monotonicity of $\chi$.

    $(ii)$ Employing the function $\psi_k$ defined in $(i)$ and considering $u_m$ as a non-negative solution to \eqref{23.07.08.13.42}, It\^o's formula reveals that for any stopping time $\tilde{\tau} \leq \tau \leq T$,
\begin{align*}
       (u_m(\tilde{\tau},\cdot),\psi_k)\mathrm{e}^{-\tilde{\tau}}&=(u_0,\psi_k)+\int_0^{\tilde{\tau}}\int_{\bR^d}u_m(s,x)(\phi(\Delta)\psi_k(x)-\psi_k(x))\mathrm{e}^{-s}\mathrm{d}x\mathrm{d}s\\
       &\quad+\int_0^{\tilde{\tau}}\int_{\bR^d}\zeta(s,x) F_m(s,u_m(s,x))\psi_k(x)\mathrm{e}^{-s}\mathrm{d}x\mathrm{d}s\\
       &\quad+
       \int_0^{\tilde{\tau}}\int_{\bR^d}B_m(s,u_m(s,x))\vec{b}(s,x)\cdot\nabla_x\psi_k(x)\mathrm{e}^{-s}\mathrm{d}x\mathrm{d}s\\
       &\quad+\sum_{i=1}^{\infty}\int_0^{\tilde{\tau}}\int_{\bR^d}\xi(s,x)\varphi_m(s,u_m(s,x))\psi_k(x)(f\ast e_i)(x)\mathrm{e}^{-s}\mathrm{d}x\mathrm{d}w^k_s.\\
       &=:(u_0,\psi_k)+I_1+I_2+I_3+I_4
   \end{align*}
Given $\mathbb{E}[I_4] = 0$, we focus on $I_1$, $I_2$, and $I_3$. Lemma \ref{24.02.04.14.46}-$(i)$ ensures $\mathbb{E}[I_1] \leq 0$, and
$$
\bE[I_3]\leq N(m)k^{-1}\bE\left[\int_0^{\tilde{\tau}}\int_{\bR^d}u_m(s,x)\psi_k(x)\mathrm{d}x\mathrm{d}s\right].
$$
For $I_2$, it holds that
$$
\bE[-I_2]\geq \mathrm{e}^{-T}c_{\mathrm{s.d.}}\inf_{\omega,t,x}\zeta(\omega,t,x)\bE\left[\int_0^{\tilde{\tau}}\int_{\bR^d}|u_m(s,x)|^{1+\lambda_{s.d.}}h_m(u_m(s,x))\psi_k(x)\mathrm{d}x\mathrm{d}s\right].
$$
By \cite[Theorem III.6.8]{Kry1995}, for any $\alpha \in (0,1)$,
\begin{align*}
    &\bE\left[\sup_{t\leq\tau}\left(\int_{\bR^d}u_m(t,x)\psi_k(x)\mathrm{d}x\right)^{\alpha}+\left(\int_0^{\tau}\int_{\bR^d}|u_m(s,x)|^{1+\lambda_{\mathrm{s.d.}}}h_m(u_m(s,x))\psi_k(x)\mathrm{d}x\mathrm{d}s\right)^{\alpha}\right]\\
    &\leq N_0\left(\int_{\bR^d}\bE[u_0(x)]\psi_k(x)\mathrm{d}x\right)^{\alpha}+\frac{N_1}{k^{\alpha}}\bE\left[\left(\int_0^{\tau}\int_{\bR^d}u_m(t,x)\psi_k(x)\mathrm{d}x\mathrm{d}t\right)^{\alpha}\right],
\end{align*}
where $N_0 = N_0(\alpha, c_{\mathrm{s.d.}}, T, \zeta)$ and $N_1 = N_1(\alpha, c_{\mathrm{s.d.}}, m, T, \zeta)$. For $k^{\alpha} \geq 2N_1T^{\alpha}$,
\begin{align*}
    &\bE\left[\sup_{t\leq\tau}\left(\int_{\bR^d}u_m(t,x)\psi_k(x)\mathrm{d}x\right)^{\alpha}+\left(\int_0^{\tau}\int_{\bR^d}|u_m(s,x)|^{1+\lambda_{\mathrm{s.d.}}}h_m(u_m(s,x))\psi_k(x)\mathrm{d}x\mathrm{d}s\right)^{\alpha}\right]\\
    &\leq N\left(\int_{\bR^d}\bE[u_0(x)]\psi_k(x)\mathrm{d}x\right)^{\alpha}.
\end{align*}
The proof concludes by applying the monotone convergence theorem, establishing
$$
\mathbb{E}\left[\sup_{t\leq\tau}\|u_m(t,\cdot)\|_{L_1(\bR^d)}^{\alpha}+\left(\int_0^{\tau}\int_{\bR^d}|u_m(s,x)|^{1+\lambda_{\mathrm{s.d.}}}h_m(u_m(s,x))\mathrm{d}x\mathrm{d}s\right)^{\alpha}\right]\leq N\|u_0\|_{L_1(\Omega\times\bR^d)}^{\alpha}.
$$
\end{proof}

\begin{proof}[Proof of Theorem \ref{local}-$(iv)$]
For fixed $m$ and $S$, we denote
$$
\tau_{m}(S):=\inf\left\{t\leq\tau:\sup_{s\leq t}\|u_m(s,\cdot)\|_{L_1(\bR^d)}+\||u_m|^{1+\lambda_{\mathrm{s.d.}}}h_m(u_m)\|_{L_1((0,t)\times\bR^d)}\geq S\right\}.
$$
Due to Lemma \ref{24.02.04.14.46}-$(ii)$, $\tau_m(S)$ is a stopping time.
Now, we consider the equation
\begin{equation}
    \label{23.07.25.16.05}
    \mathrm{d}v_m=(\phi(\Delta)v_m+\vec{b}\cdot\nabla_{x} (B_m(u_m)))\mathrm{d}t+\sum_{k=1}^{\infty}\xi\varphi_m(u_m)(\pi\ast e_k)\mathrm{d}w_t^k
\end{equation}
on $\opar0,\tau_m(S)\cbrk\times\bR^d$ with initial data $u_0$.

\textbf{Step 1.} Estimation of $\vec{b}\cdot\nabla_{x} (B_m(u_m))$.

One can check that
    \begin{align*}
    &\|\vec{b}(t,\cdot)\cdot\nabla_x(B_m(u_m(t,\cdot)))\|_{H_{p}^{\phi,\gamma-2}(\bR^d)}\\
    &\leq N\|B_m(u_m(t,\cdot))\|_{H_{p}^{\delta_0(\gamma-2)+1}(\bR^d)}\\
    &=N\left\|\int_{\bR^d}R_{\delta_0(2-\gamma)-1}(\cdot-y)B_m(y,u_m(t,y))\mathrm{d}y\right\|_{L_p(\bR^d)}\\
    &\leq N\left\|\int_{\bR^d}R_{\delta_0(2-\gamma)-1}(\cdot-y)|u_m(t,y)|^{1+\lambda_{\mathrm{b.}}}h_m(u_m(t,y))\mathrm{d}y\right\|_{L_p(\bR^d)}.
    \end{align*}
    Here, $R_{\delta_0(2-\gamma)-1}$ is the function in \eqref{24.03.04.15.14}.
    By H\"older's inequality and Minkowski's inequality, for $t\leq \tau_{m}(S)$,
\begin{align*}
&\left\|\int_{\bR^d}R_{\delta_0(2-\gamma)-1}(\cdot-y)|u_m(t,y)|^{1+\lambda_{\mathrm{b.}}}h_m(u_m(t,y))\mathrm{d}y\right\|_{L_p(\bR^d)}^p \\
&=\int_{\bR^{d}}\left|\int_{\bR^d}R_{\delta_0(2-\gamma)-1}(x-y)|u_m(t,y)|^{1+\lambda_{\mathrm{b.}}}h_m(u_m(t,y))\mathrm{d}y\right|^{p}\mathrm{d}x\\
&\leq\|u_m(t,\cdot)\|_{L_1(\bR^d)}^{\left(1-\frac{1}{r}\right)p}\int_{\bR^{d}}\left|\int_{\bR^d}\left|R_{\delta_0(2-\gamma)-1}(x-y)\right|^{r}|u_m(t,y)|^{1+r\lambda_{\mathrm{b.}}}h_m(u_m(t,y))\mathrm{d}y\right|^{\frac{p}{r}}\mathrm{d}x\\
&\leq S^{\left(1-\frac{1}{r}\right)p} \int_{\bR^{d}}\left|\int_{\bR^d}\left|R_{\delta_0(2-\gamma)-1}(y)\right|^{r}|u_m(t,x-y)|^{1+r\lambda_{\mathrm{b.}}}h_m(u_m(t,x-y))\mathrm{d}y\right|^{\frac{p}{r}}\mathrm{d}x \\
&\leq S^{\left(1-\frac{1}{r}\right)p} \|R_{\delta_0(2-\gamma)-1}\|_{L_r(\bR^d)}^p  \int_{\bR^{d}}|u_m(t,x)|^{\frac{p}{r}+p\lambda_{\mathrm{b.}}}h_m(u_m(t,x))\mathrm{d}x,
\end{align*}
where $1\leq r\leq p$.
Due to \eqref{24.04.06.19.26}, $q\in(2,\infty)$ and $\delta_0\in(0,1]$, we have
$$
p>\frac{d}{d+1-\delta_0(2-\gamma)}.
$$
With the help of \eqref{24.04.24.19.45}, we can choose $r\geq1$ such that
\begin{equation}
\label{25.08.06.17.59}
\frac{1}{p}<\frac{d+1-\delta_0(2-\gamma)}{d}< \frac{1}{r}<1+\frac{1}{p}+\frac{\lambda_{\mathrm{s.d.}}}{p\vee q}-\lambda_{\mathrm{b.}}.
\end{equation}
Moreover, $r$ also satisfies
\begin{equation}
\label{24.03.31.15.59}
    0<\frac{1}{r}+\lambda_{\mathrm{b.}}-\frac{1}{p}-\frac{\lambda_{\mathrm{s.d.}}}{p\vee q}<1.
\end{equation}
Since $\delta_0(2 - \gamma) - 1 \in (0,1)$, it follows from \eqref{24.04.09.15.21} and \eqref{25.08.06.17.55} that
$$
    \|R_{\delta_0(2 - \gamma) - 1}\|_{L_r(\mathbb{R}^d)} < \infty 
    \quad \Longleftrightarrow \quad r < \frac{d}{d + 1 - \delta_0(2 - \gamma)}.
$$
This condition is clearly satisfied by the choice of $r$ in \eqref{25.08.06.17.59}.
If $q\leq p$, then by H\"older's inequality,
\begin{align*}
    &\int_0^{\tau_m(S)}\left(\int_{\bR^{d}}|u_m(t,x)|^{\frac{p}{r}+p\lambda_{\mathrm{b.}}}h_m(u_m(t,x))\mathrm{d}x\right)^{\frac{q}{p}}\mathrm{d}t\\
    &\leq \tau_m(S)^{1-\frac{q}{p}}\left(\sup_{t\leq\tau_m(S)}\sup_{x\in\bR^d}|u_m(t,x)|\right)^{\frac{q}{r}+q\lambda_{\mathrm{b.}}-\frac{q}{p}(1+\lambda_{\mathrm{s.d.}})}\\
    &\qquad \times \left(\int_{0}^{\tau_m(S)}\int_{\bR^d}|u_m(t,x)|^{1+\lambda_{\mathrm{s.d.}}}h_m(u_m(t,x))\mathrm{d}x\mathrm{d}t\right)^{\frac{q}{p}}\\
    &\leq S^{\frac{q}{p}}T^{1-\frac{q}{p}}\left(\sup_{t\leq\tau_m(S)}\sup_{x\in\bR^d}|u_m(t,x)|\right)^{\frac{q}{r}+q\lambda_{\mathrm{b.}}-\frac{q}{p}(1+\lambda_{\mathrm{s.d.}})}.
\end{align*}
If $p<q$, then
\begin{align*}
    &\int_0^{\tau_m(S)}\left(\int_{\bR^{d}}|u_m(t,x)|^{\frac{p}{r}+p\lambda_{\mathrm{b.}}}h_m(u_m(t,x))\mathrm{d}x\right)^{\frac{q}{p}}\mathrm{d}t\\
    &=\int_0^{\tau_m(S)}\left(\int_{\bR^{d}}|u_m(t,x)|^{\frac{p}{r}+p\lambda_{\mathrm{b.}}}h_m(u_m(t,x))\mathrm{d}x\right)^{\frac{q}{p}-1}\\
    &\qquad \times\left(\int_{\bR^{d}}|u_m(t,x)|^{\frac{p}{r}+p\lambda_{\mathrm{b.}}}h_m(u_m(t,x))\mathrm{d}x\right)\mathrm{d}t\\
    &\leq \left(\sup_{t\leq\tau_m(S)}\|u_m(t,\cdot)\|_{L_1(\bR^d)}\right)^{\frac{q}{p}-1}\times\left(\sup_{t\leq\tau_m(S)}\sup_{x\in\bR^d}|u_m(t,x)|\right)^{\frac{q}{r}+q\lambda_{\mathrm{b.}}-\frac{q}{p}-\lambda_{\mathrm{s.d.}}}\\
    &\qquad\times\int_{0}^{\tau_m(S)}\int_{\bR^d}|u_m(t,x)|^{1+\lambda_{\mathrm{s.d.}}}h_m(u_m(t,x))\mathrm{d}x\mathrm{d}t\\
    &\leq S^{\frac{q}{p}}\left(\sup_{t\leq\tau_m(S)}\sup_{x\in\bR^d}|u_m(t,x)|\right)^{\frac{q}{r}+q\lambda_{\mathrm{b.}}-\frac{q}{p}-\lambda_{\mathrm{s.d.}}}.
\end{align*}
Therefore,
\begin{align*}
    &\|\vec{b}\cdot\nabla_x(B_m(u_m))\|_{\bH_{p,q}^{\phi,\gamma-2}(\tau_m(S))}\\
    &\leq NS^{1+\frac{1}{p}-\frac{1}{r}}\bE\left[\left(\sup_{t\leq\tau_m(S)}\sup_{x\in\bR^d}|u_m(t,x)|\right)^{\frac{q}{r}+q\lambda_{\mathrm{b.}}-\frac{q}{p}-\frac{q\lambda_{\mathrm{s.d.}}}{p\vee q}}\right]^{1/q}.
\end{align*}
Due to \eqref{24.03.31.15.59}, we have
$$
\|\vec{b}\cdot\nabla_x(B_m(u_m))\|_{\bH_{p,q}^{\phi,\gamma-2}(\tau_m(S))}\leq NS^{1+\frac{1}{p}-\frac{1}{r}}\bE[|u_m|_{C_{t,x}([0,\tau_m(S)]\times\bR^d)}^q]^{\frac{1}{q}\left(\frac{1}{r}+\lambda_{\mathrm{b.}}-\frac{1}{p}-\frac{\lambda_{\mathrm{s.d.}}}{p\vee q}\right)}.
$$

\textbf{Step 2.} Estimation of $\xi\varphi_m(u_m)\boldsymbol{\pi}$, where $\boldsymbol{\pi}:=(\pi\ast e_1,\pi\ast e_2,\cdots)$.

We modify the proof of \cite[Lemma 4.4]{CH2021}.
One can check that
$$
\|\xi(t,\cdot)\varphi_m(u_m(t,\cdot))\boldsymbol{\pi}\|_{H_{p}^{\phi,\gamma-1}(\ell_2)}^p\leq N\int_{\bR^d}\|R_{\delta_0(1-\gamma)}(x-\cdot)|u_m(t,\cdot)|^{1+\lambda_{\mathrm{s.m.}}}h_m(u_m(t,\cdot))\|_{\mathfrak{H}}^{p}\mathrm{d}x,
$$
where
\begin{align*}
    &\|R_{\delta_0(1-\gamma)}(x-\cdot)f\|_{\mathfrak{H}}^2\\
    &=\int_{\bR^d}\int_{\bR^d}R_{\delta_0(1-\gamma)}(x-(y-z))R_{\delta_0(1-\gamma)}(x+z)f(y-z)\overline{f(-z)}\mathrm{d}z\pi(\mathrm{d}y).
\end{align*}
By H\"older's inequality, for $t\leq\tau_m(S)$,
\begin{align*}
    &\|R_{\delta_0(1-\gamma)}(x-\cdot)|u_m(t,\cdot)|^{1+\lambda_{\mathrm{s.m.}}}h_m(u_m(t,\cdot))\|_{\mathfrak{H}}^2\\
    &\leq\left(\int_{\bR^d}\int_{\bR^d}|u_m(t,y-z)|^{\frac{1}{2}}|u_m(t,-z)|^{\frac{1}{2}}\mathrm{d}z\frac{\pi(\mathrm{d}y)}{(1+|y|^2)^{k/2}}\right)^{1-\frac{1}{r}}\\
    &\quad\times\bigg(\int_{\bR^d}\int_{\bR^d}|R_{\delta_0(1-\gamma)}(x-(y-z))R_{\delta_0(1-\gamma)}(x-z)|^{r}\\
    &\quad\quad\quad\times|u_m(t,y-z)|^{\frac{r+1}{2}+r\lambda_{\mathrm{s.m.}}}|u_m(t,-z)|^{\frac{r+1}{2}+r\lambda_{\mathrm{s.m.}}}\\
    &\quad\quad\quad\times h_m(u_m(t,y-z))h_m(u_m(t,-z))(1+|y|^2)^{k(r-1)/2}\mathrm{d}z\pi(\mathrm{d}y)\bigg)^{\frac{1}{r}}\\
    &\leq A_{\pi}^{1-\frac{1}{r}}\|u_m(t,\cdot)\|_{L_1(\bR^d)}^{1-\frac{1}{r}}\Gamma(x)^{\frac{1}{r}}\leq A_{\pi}^{1-\frac{1}{r}}S^{1-\frac{1}{r}}\Gamma(x)^{\frac{1}{r}},
\end{align*}
where $A_{\pi}$ is the constant defined in \eqref{24.04.21.15.46} and
\begin{align*}
\Gamma(x)&:=\int_{\bR^d}\int_{\bR^d}|R_{\delta_0(1-\gamma)}(x-(y-z))R_{\delta_0(1-\gamma)}(x-z)|^{r}\\
&\quad\quad\quad\times|u_m(t,y-z)|^{\frac{r+1}{2}+r\lambda_{\mathrm{s.m.}}}|u_m(t,-z)|^{\frac{r+1}{2}+r\lambda_{\mathrm{s.m.}}}\\
&\quad\quad\quad\times h_m(u_m(t,y-z))h_m(u_m(t,-z))(1+|y|^2)^{k(r-1)/2}\mathrm{d}z\pi(\mathrm{d}y).
\end{align*}
Since $r<p/2$, Minkowski's inequality yields that
\begin{align*}
\int_{\bR^d}\Gamma(x)^{\frac{p}{2r}}\mathrm{d}x&\leq \left(\int_{\bR^d}(R_{\delta_0(1-\gamma)}^r\ast R_{\delta_0(1-\gamma)}^r)(y)(1+|y|^{2})^{k(r-1)/2}\pi(\mathrm{d}y)\right)^{\frac{p}{2r}}\\
&\quad\times\left(\int_{\bR^d}|u_m(t,x)|^{\frac{p(r+1)}{2r}+p\lambda_{\mathrm{s.m.}}}h_m(u_m(t,x))\mathrm{d}x\right)\\
&=:I^{\frac{p}{2r}}\int_{\bR^d}|u_m(t,x)|^{\frac{p(r+1)}{2r}+p\lambda_{\mathrm{s.m.}}}h_m(u_m(t,x))\mathrm{d}x.
\end{align*}
Since $(R_{\delta_0(1-\gamma)}^r\ast R_{\delta_0(1-\gamma)}^r)$ decays exponentially as $|x|\to\infty$,
$$
I<\infty \Longleftrightarrow \int_{|y|< 1}(R_{\delta_0(1-\gamma)}^r\ast R_{\delta_0(1-\gamma)}^r)(y)\pi(\mathrm{d}y)<\infty.
$$
Due to Proposition \ref{24.04.21.21.05}, $I<\infty$.
Therefore,
\begin{align*}
&\int_0^{\tau_m(S)}\left(\int_{\bR^d}|u_m(t,x)|^{\frac{p(r+1)}{2r}+p\lambda_{\mathrm{s.m.}}}h_m(u_m(t,x))\mathrm{d}x\right)^{\frac{p}{q}}\mathrm{d}t\\
&\leq NS^{1+\frac{1}{p}-\frac{1}{r}}\mathbb{E}\left[\left(\sup_{t\leq\tau_m(S)}\sup_{x\in\bR^d}|u_m(t,x)|\right)^{\frac{q(r+1)}{2r}+q\lambda_{\mathrm{s.m.}}-\frac{q}{p}-\frac{q\lambda_{\mathrm{s.d.}}}{p\vee q}}\right]^{1/q}\\
&\leq NS^{1+\frac{1}{p}-\frac{1}{r}}\mathbb{E}\left[|u_m|_{C_{t,x}([0,\tau_m(S)]\times\bR^d)}^q\right]^{\frac{1}{q}\left(\frac{r+1}{2r}+\lambda_{\mathrm{s.m.}}-\frac{1}{p}-\frac{\lambda_{\mathrm{s.d.}}}{p\vee q}\right)}.
\end{align*}

\textbf{Step 3.} Conclusion.

Therefore, by Theorem \ref{23.07.04.17.12}, equation \eqref{23.07.25.16.05} has a unique solution $v_m\in\cH_{p,q}^{\phi,\gamma}(\tau_m(S))$ with estimate
\begin{align*}
    \|v_m\|_{\cH_{p,q}^{\phi,\gamma}(\tau_m(S))}&\leq N\bigg(\|u_0\|_{U_{p,q}^{\phi,\gamma}}+\bE[|u_m|_{C_{t,x}([0,\tau_m(S)]\times\bR^d)}^q]^{\frac{1}{q}\left((1+\lambda_{\mathrm{s.m.}})-\frac{1}{p}-\frac{\lambda_{\mathrm{s.d.}}}{p\vee q}\right)}\\
    &\qquad\quad+\bE[|u_m|_{C_{t,x}([0,\tau_m(S)]\times\bR^d)}^q]^{\frac{1}{q}\left((1+\lambda_{\mathrm{b.}})-\frac{1}{p}-\frac{\lambda_{\mathrm{s.d.}}}{p\vee q}\right)}\bigg),
\end{align*}
where $N=N(S)$.
If we put $z_m:=u_m-v_m$, then $z_m$ satisfies
$$
\mathrm{d}z_m=(\phi(\Delta)z_m+\zeta F_m(u_m))\mathrm{d}t,\quad z_m(0)=0.
$$
It can be easily checked that
$$
z_m(t)=\int_0^tT_{t-s}(\zeta F_m(u_m))\mathrm{d}s\leq 0.
$$
This implies that $u_m\leq v_m$, thus, by Young's inequality
$$
\|v_m\|_{\cH_{p,q}^{\phi,\gamma}(\tau_m(S))}\leq N(S)\|u_0\|_{U_{p,q}^{\phi,\gamma}}+N(\varepsilon,S)+\varepsilon\bE[|v_m|_{C_{t,x}([0,\tau_m(S)]\times\bR^d)}^q]^{1/q}.
$$
By Theorem \ref{23.09.05.17.43},
$$
\|v_m\|_{\cH_{p,q}^{\phi,\gamma}(\tau_m(S))}\leq N(S)\|u_0\|_{U_{p,q}^{\phi,\gamma}}+N(\varepsilon,S)+N\varepsilon\|v_m\|_{\cH_{p,q}^{\phi,\gamma}(\tau_m(S))}.
$$
If we choose sufficiently small $\varepsilon>0$, then we have
\begin{equation}
\label{23.09.29.00.30}
    \|v_m\|_{\cH_{p,q}^{\phi,\gamma}(\tau_m(S))} \leq N(S)\|u_0\|_{U_{p,q}^{\phi,\gamma}}+N(\varepsilon,S)=:N_S.
\end{equation}
By making use of Chebyshev's inequality, Theorem \ref{23.09.05.17.43} and \eqref{23.09.29.00.30},
\begin{align*}
    &\bP\left(\sup_{t\leq\tau}\sup_{x\in\bR^d}|u_m(t,x)|\geq R\right)\\
    &\leq \bP\left(\sup_{t\leq\tau_m(S)\wedge\tau}\sup_{x\in\bR^d}|v_m(t,x)|\geq R\right)+\bP(\tau_{m}(S)\leq\tau)\\
    &\leq \frac{N_S}{R}+\bP\left(\sup_{t\leq\tau}\|u_m(t,\cdot)\|_{L_1(\bR^d)}+\int_0^{\tau}\int_{\bR^d}|u_m(t,x)|^{1+\lambda_{\mathrm{s.d.}}}h_m(u_m(t,x))\mathrm{d}x\mathrm{d}t\geq S\right)\\
    &\leq \frac{N_S}{R}+\frac{N_{\alpha}}{S^{\alpha}}.
\end{align*}
Note that $N_{\alpha}$ is independent of $m$ and $S$.
Since $N_S$ and $N_{\alpha}$ are independent of $m$, the theorem is proved.
\end{proof}

\vspace{1cm}
\textbf{Declarations of interest.}
Declarations of interest: none

\textbf{Data Availability.}
Data sharing not applicable to this article as no datasets were generated or analysed during the current study.

\textbf{Acknowledgements.}
The authors are highly indebted to the anonymous reviewers for their careful reading and insightful comments, which substantially improved the presentation of this paper.
J.-H. Choi has been supported by a KIAS Individual Grant (MG102701) at Korea Institute for Advanced Study.
B.-S. Han has been supported by the National Research Foundation of Korea (NRF) grant funded by the Korea government (MSIT) (No. NRF-2021R1C1C2007792).
D. Park has been supported by the 2025 Research Grant from Kangwon National University (grant no. 202504300001).
The authors would also like to thank Dr. Junhee Ryu and Dr. Jinsol Seo for giving many helpful comments.

\bibliographystyle{plain}

\end{document}